\DeclareFontFamily{U}{shuffle}{}
\DeclareFontShape{U}{shuffle}{m}{n}{ <-8>shuffle7 <8->shuffle10}{}
\newcommand{\tn}{{\tilde{n}}}
\DeclareMathOperator{\dc}{{\rm dc}}
\DeclareMathOperator{\sn}{{\rm sn}}
\DeclareMathOperator{\sd}{{\rm sd}}
\DeclareMathOperator{\nc}{{\rm nc}}
\DeclareMathOperator{\ds}{{\rm ds}}
\DeclareMathOperator{\cn}{{\rm cn}}
\DeclareMathOperator{\dn}{{\rm dn}}
\newcommand{\ga}{\alpha}
\newcommand{\gl}{\lambda}
\DeclareMathOperator\Res{{\rm Res}}
\newcommand{\bfi}{{\boldsymbol{\sl{i}}}}
\def\int{\displaystyle\!int}
\def\lim{\displaystyle\!lim}
\def\sum{\displaystyle\!sum}
\def\sup{\displaystyle\!sup}
\def\inf{\displaystyle\!inf}
\def\cap{\displaystyle\!cap}
\def\max{\displaystyle\!max}
\def\min{\displaystyle\!min}
\def\frac{\displaystyle\!frac}
\let\oldsection\section
\renewcommand\section{\setcounter{equation}{0}\oldsection}
\def\gs{{\sigma}}
\def\ss{{s}}
\def\R{\mathbb{R}}
\def\N{\mathbb{N}}
\def\Z{\mathbb{Z}}
\def\Q{\mathbb{Q}}
\theoremstyle{plain}
\newtheorem{thm}{Theorem}[section]
\newtheorem{lem}[thm]{Lemma}
\newtheorem{conj}[thm]{Conjecture}
\newtheorem{cor}[thm]{Corollary}
\newtheorem{con}[thm]{Conjecture}
\newtheorem{pro}[thm]{Proposition}
\theoremstyle{definition}
\newtheorem{defn}{Definition}[section]
\newtheorem{re}[thm]{Remark}
\begin{document}
\title{\bf General Berndt-Type Integrals and Series Associated with Jacobi Elliptic Functions}
\author{
{Ce Xu${}^{a,}$\thanks{Email: cexu2020@ahnu.edu.cn}\quad and\quad Jianqiang Zhao${}^{b,}$\thanks{Email: zhaoj@ihes.fr}}\\[1mm]
\small a. School of Mathematics and Statistics, Anhui Normal University,\\ \small  Wuhu 241002, P.R. China\\
\small b. Department of Mathematics, The Bishop's School, La Jolla, CA 92037, USA\\[5mm]
\normalsize \emph{Dedicated to Professor Bruce C. Berndt on the occasion of his 85th birthday}
}

\date{}
\maketitle

\noindent{\bf Abstract.}  In this paper, we prove two structural theorems on the general Berndt-type integrals with the denominator having arbitrary positive degrees by contour integrations involving hyperbolic and trigonometric functions, and hyperbolic sums associated with Jacobi elliptic functions.  We first establish explicit relations between these integrals and four classes of hyperbolic sums. Then, using our previous results on hyperbolic series and applying the matrix method from linear algebra, we compute explicitly several general hyperbolic sums and their higher derivatives. These enable us to express two families of general Berndt-type integrals as polynomials in $\Gamma^4(1/4)$ and $\pi^{-1}$ with rational coefficients, where $\Gamma$ is the Euler gamma function. At the end of the paper, we provide some conjectures of general Berndt-type integrals.

\medskip

\noindent{\bf Keywords}: Berndt-type integral, $q$-series, hyperbolic and trigonometric functions, contour integration, Jacobi elliptic functions, Fourier series expansions.

\medskip
\noindent{\bf AMS Subject Classifications (2020):} 05A30, 32A27, 42A16, 33E05, 11B68.

\section{Introduction}
For nonnegative integer $a$ and positive integer $b$, the \emph{Berndt-type integrals of order $b$} are of the form
\begin{align}\label{BTI-definition-1}
\int_0^\infty \frac{x^{a}dx}{(\cos x\pm\cosh x)^b},
\end{align}
where $a\geq 0$ and $b\geq 1$ if the denominator has ``$+$' sign, and $a\geq 2b$ otherwise.

The study of Berndt-type integrals has a long history, first as a problem submitted by Ramanujan \cite[pp. 325-326]{Rama1916}
over a century ago to the \emph{Indian Journal of Pure and Applied Mathematics}:
\begin{equation*}
\int_0^\infty \frac{\sin(nx)}{x(\cos x+\cosh x)}dx=\frac{\pi}{4} \quad \text{(for any odd integer $n$)}.
\end{equation*}
Wilkinson \cite{W1916} provided a proof four years later.
At the International Conference on Orthogonal Polynomials and $q$-Series, which was held in Orlando in May 2015 in celebration of the 70th birthday of Professor Mourad Ismail, Dennis Stanton gave a plenary talk titled ``A small slice of Mourad's work". One of the topics in that talk was about ``the mystery integral of Mourad Ismail" \cite{K2017}:
\begin{align}\label{inte-Mourad-Ismail}
\int_{-\infty}^\infty \frac{dx}{\cos(K \sqrt{x})+\cosh(K' \sqrt{x})}=2,
\end{align}
where $K\equiv K(x)$ denotes the complete elliptic integral of the first kind, see \eqref{defn-elliptic-first}. This curious integral \eqref{inte-Mourad-Ismail} first appeared in \cite{Ismail1998} by Ismail and Valent. Berndt \cite{Berndt2016} provided a direct evaluation of \eqref{inte-Mourad-Ismail} and other similar integrals of this type by using residue computations, the Fourier series expansions, and the Maclaurin series expansions of Jacobi elliptic functions. In particular, he proved that some Berndt-type integrals of order one can be evaluated by special values of the Gamma function. For example,
\begin{alignat*}{4}
\int_0^\infty \frac{x^3}{\cos x-\cosh x}=&\, -\frac{\Gamma^8(1/4)}{256\pi^2},\quad
& \int_0^\infty \frac{x^7}{\cos x-\cosh x}=&\,\frac{9\Gamma^{16}(1/4)}{2^{13}\pi^4},\\
\int_0^\infty \frac{x^5}{\cos x+\cosh x}=&\, \frac{3\pi^3\Gamma^6(1/4)}{256\Gamma^6(3/4)},\quad
& \int_0^\infty \frac{x^9}{\cos x+\cosh x}=&\,\frac{3^3\cdot 7\pi^5\Gamma^{10}(1/4)}{2^{12}\Gamma^{10}(3/4)}.
\end{alignat*}
It turns out that Berndt's results and methods can be generalized and organized further.  In our previous paper \cite{XZ2023}, by extending an argument as used in the proof of the main theorem of \cite{Berndt2016}, we proved that the following explicit evaluations of two Berndt-type integrals of order two (cf. \cite[Thm. 1.3]{XZ2023}):
\begin{align*}
&\int_0^\infty \frac{x^{4p+1}}{(\cos x-\cosh x)^2}dx\in \Q\frac{\Gamma^{8p}(1/4)}{\pi^{2p}}+\Q\frac{\Gamma^{8p+8}(1/4)}{\pi^{2p+4}}\quad (p\in \N),\\
&\int_0^\infty \frac{x^{4p+1}}{(\cos x+\cosh x)^2}dx\in \Q\frac{\Gamma^{8p}(1/4)}{\pi^{2p}}+\Q\frac{\Gamma^{8p+8}(1/4)}{\pi^{2p+4}}\quad (p\in \N).
\end{align*}
Similarly, we also proved the following explicit evaluations of two Berndt-type integrals of order one ($\N_0:=\N \cup \{0\}$)
\begin{align*}
&\int_0^\infty \frac{x^{4p-1}}{\cos x-\cosh x}dx\in \Q \frac{\Gamma^{8p}(1/4)}{\pi^{2p}}\quad (p\in \N),\\
&\int_0^\infty \frac{x^{4p+1}}{\cos x+\cosh x}dx\in \Q \frac{\Gamma^{8p+4}(1/4)}{\pi^{2p+1}}\quad (p\in \N_0).
\end{align*}
Further, we proved the following explicit evaluations of Berndt-type integrals of order three (see \cite[Theorem 1.1]{RXZ2023}):
putting $\Gamma:=\Gamma(1/4)$, for any positive integer $p$, we have
\begin{align*}
	\int_{0}^{\infty}\frac{x^{4p+1}\, dx}{(\cos x+\cosh x)^3}\in &\Q\frac{\Gamma^{8p-4}}{\pi^{2p-1}}+\Q\frac{\Gamma^{8p+4}}{\pi^{2p+3}}+\Q\frac{\Gamma^{8p+4}}{\pi^{2p+2}}+\Q\frac{\Gamma^{8p+4}}{\pi^{2p+1}}+\Q\frac{\Gamma^{8p+12}}{\pi^{2p+7}},
	\\\int_{0}^{\infty}\frac{x^{4p-1}\, dx}{\left(\cos x-\cosh x\right)^3}\in&\Q\frac{\Gamma^{8p-8}}{\pi^{2p-2}}+\Q\frac{\Gamma^{8p}}{\pi^{2p+2}}+\Q\frac{\Gamma^{8p}}{\pi^{2p+1}}+\Q\frac{\Gamma^{8p}}{\pi^{2p}}+\Q\frac{\Gamma^{8p+8}}{\pi^{2p+6}}\quad (p>1).
\end{align*}

In this paper, we use the contour integrals and the series associated with Jacobi
elliptic functions to establish two structural theorems on the general Berndt-type integrals
\eqref{BTI-definition-1} with the denominator having arbitrary positive degrees.
We will prove the following evaluations (see Theorems \ref{thm:Plus} and \ref{thm:Minus}, respectively).

\begin{thm} \label{thm:Plus-0}
Let $X=\Gamma^4(1/4)$ and $Y=\pi^{-1}$. For all integers $m\geq 1$ and $p\geq [m/2]$, the Berndt-type integrals
\begin{equation*}
\int_0^\infty \frac{x^{4p+1} dx}{(\cos x+\cosh x)^{m}} \in   \Q[X,Y]
\end{equation*}
where the degrees of $X$ have the same parity as that of $m$
and are between $2p-m+2$ and $2p+m$, inclusive, while
the degrees of $Y$ are between $2p-m+2$ and $2p+3m-2$, inclusive.
\end{thm}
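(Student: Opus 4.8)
The plan is to convert the integral into hyperbolic sums by a contour integration, to evaluate those sums through the Fourier expansions of the Jacobi elliptic functions at the lemniscatic modulus, and finally to track the powers of $X$ and $Y$ that appear. The geometric starting point is the factorization
\begin{equation*}
\cos z+\cosh z=2\cos\Bigl(\tfrac{1+i}{2}\,z\Bigr)\cos\Bigl(\tfrac{1-i}{2}\,z\Bigr),
\end{equation*}
which follows from $\cosh z=\cos(iz)$ and the sum-to-product identity. It exhibits the zeros of the denominator as simple zeros of $\cos z+\cosh z$ on the diagonals, namely $z=\tfrac{(2k+1)\pi}{2}(1\pm i)$ with $k\in\Z$, so that $(\cos z+\cosh z)^{-m}$ has poles of order exactly $m$ there and none on the real axis, where $\cos x+\cosh x>0$. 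I would integrate an auxiliary function built from $z^{4p+1}(\cos z+\cosh z)^{-m}$, corrected by an elementary trigonometric or exponential kernel that both breaks the odd symmetry of the bare integrand and controls the connecting arcs, around a large contour adapted to the diagonal poles; as the contour grows, its real-axis part reproduces the Berndt-type integral while the enclosed residues assemble into the four classes of hyperbolic sums.

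Since each diagonal point is a pole of order $m$, the residue there is
\begin{equation*}
\Res_{z=z_k}\frac{z^{4p+1}}{(\cos z+\cosh z)^{m}}=\frac{1}{(m-1)!}\frac{d^{m-1}}{dz^{m-1}}\left[\frac{(z-z_k)^{m}\,z^{4p+1}}{(\cos z+\cosh z)^{m}}\right]_{z=z_k}.
\end{equation*}
Expanding by the Leibniz rule splits this into exactly $m$ terms, one multiple of $z_k^{4p+1-a}$ for each $0\le a\le m-1$, times an $(m-1-a)$-th derivative of the regular part. Because $z_k$ is a rational multiple of $(2k+1)\pi$, the factor $z_k^{4p+1-a}$ contributes a power of $\pi$ together with a power sum in $2k+1$, while the derivatives of the regular part evaluate to polynomials in $\cosh\bigl(\tfrac{(2k+1)\pi}{2}\bigr)$ and $\sinh\bigl(\tfrac{(2k+1)\pi}{2}\bigr)$; summing over $k$ produces precisely the higher derivatives of the four hyperbolic-sum classes. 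This realizes the explicit relations between the integrals and the hyperbolic sums promised in the abstract, and confines all the arithmetic content to those sums.

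To evaluate the hyperbolic sums I would use that $\cosh\bigl(\tfrac{(2k+1)\pi}{2}\bigr)^{-1}$ expands as a Lambert series in the lemniscatic nome $q=e^{-\pi}$, the nome of modulus $1/\sqrt2$, for which $K'=K$ and
\begin{equation*}
K=K\Bigl(\tfrac{1}{\sqrt2}\Bigr)=\frac{\Gamma^{2}(1/4)}{4\sqrt\pi},\qquad\text{hence}\qquad K^{2}=\frac{\Gamma^{4}(1/4)}{16\pi}=\frac{XY}{16}.
\end{equation*}
Feeding these $q$-series into the Fourier expansions of $\sn,\cn,\dn$ and their powers and derivatives expresses each hyperbolic sum as a polynomial in $K$ and $\pi^{-1}$, and the displayed identity then rewrites it in $X$ and $Y$; whenever several sums of the same weight are coupled, I would record the linear relations among them and invert the resulting matrix, exactly as in the previously treated orders $m\le 3$ recalled in the introduction. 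The hypothesis $p\ge[m/2]$ enters here to guarantee that the smallest exponent $2p-m+2$ is at least $1$, so that the result is a genuine element of $\Q[X,Y]$ rather than a Laurent polynomial.

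The crux, and the step I expect to be hardest, is the uniform degree bookkeeping valid for every $m$ and $p$. The $m$ terms of the Leibniz expansion are the source of the $m$ distinct $X$-degrees: as the differentiation is shifted from $z^{4p+1}$ onto the regular part, trading one power of $z_k$ for additional hyperbolic weight, one checks that the net $X$-degree changes by two, so that the degrees run through $2p-m+2,2p-m+4,\dots,2p+m$, symmetrically about $2p+1$; their common parity with $m$ is forced by combining the four symmetric diagonal rays, the powers of $1\pm i$ cancelling every contribution of the wrong parity. The wider spread of the $Y$-degrees, whose lower endpoint still equals $2p-m+2$ but whose upper endpoint reaches $2p+3m-2$, reflects the extra factors of $\pi^{-1}$ emitted when the order-$m$ residue forces $m-1$ differentiations of the elliptic-function expansions, broadening the $Y$-range by $2(m-1)$ at the top. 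Carrying out this accounting once and for all, rather than case by case, and confirming it against the explicit evaluations for $m=1,2,3$ recalled above, is the main technical burden of the proof.
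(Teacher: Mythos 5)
Your proposal follows the same overall architecture as the paper's proof: a contour integration reducing the integral to $y$-derivatives, at $y=\pi$, of the sums $\sum_{n\geq 1}(-1)^{mn}\tn^{\,b}/\cosh^m(\tn y)$ (Theorem~\ref{main-thm-one-plus}); a matrix-inversion step reducing these $\cosh^{-m}$ sums to derivatives of the basic sums $\widetilde{C}_{\cdot,1}$ and $C'_{\cdot,2}$ (Theorems~\ref{thm-alter-Cosh-one} and \ref{thm-noalter-Cosh-one}); closed forms of those basic sums via Jacobi elliptic functions; and evaluation at the lemniscatic point $x=1/2$, where $z(1/2)=\Gamma^2(1/4)/(2\pi^{3/2})$ plays the role of your $K$. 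Two remarks on the set-up: the paper needs no auxiliary kernel --- on the quarter-circle contour the segment $[iR,0]$ simply reproduces $-i^{a+1}$ times the sought integral, giving the prefactor $(1-i^{a+1})$ --- and the hypothesis $p\geq[m/2]$ is used to keep the indices of all hyperbolic sums produced by the residue computation within the range where the closed-form lemmas apply, not merely to keep the final exponents positive.

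The genuine gap is the degree bookkeeping, which you correctly identify as the crux but do not carry out, and for which your proposed mechanisms are wrong. First, the parity of the $X$-degrees cannot be ``forced by combining the four symmetric diagonal rays, the powers of $1\pm i$ cancelling every contribution of the wrong parity'': the contour argument is over once the integral is expressed through the hyperbolic sums, and the parity statement is an intrinsic property of the real numbers $\frac{d^\kappa}{dy^\kappa}\widetilde{C}_{b,1}(y)\big|_{y=\pi}$; complex phases play no role in it. In the paper it follows from the closed forms $\widetilde{C}_{4s+1,1}\in z^{4s+2}\Q[v^2]v$ and $\widetilde{C}_{4s+3,1}\in z^{4s+4}\Q[v^2]v^2v'$ (Lemma~\ref{lem-one-exform-cosh-Qx/2z}, resting on the imaginary transformation $\sn(iu,\sqrt{1-k^2})=i\,\sn(u,k)/\cn(u,k)$ and the $x\mapsto 1-x$ symmetry criterion of Lemma~\ref{lem-sigma-polyn-criterion}), combined with a derivative-tracking induction. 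Second, your claim that each Leibniz term shifts ``the net $X$-degree by two'' fails termwise: differentiation in $y$ acts through $dx/dy=-x(1-x)z^2$, and at $x=1/2$ one has $z^{(2j)}(1/2)\in\Q\,\Gamma^{2}(1/4)\,\pi^{-3/2}$ but $z^{(2j+1)}(1/2)\in\Q\,\sqrt{\pi}\,\Gamma^{-2}(1/4)$, so odd-order $z$-derivatives \emph{lower} the $\Gamma$-degree, while $v'(1/2)=0$ annihilates other terms. The stated bounds $2p-m+2\leq \deg_X\leq 2p+m$ and the fixed parity emerge only after tracking, through all $m-1-2l-j+2h$ differentiations, the parity of the number of even-order $z$-derivative factors --- the invariant $|\bfi_2|$ of Lemma~\ref{lem:kthDerzn} and Corollary~\ref{cor:kthDerzn} --- and this induction, the technical core of the paper's proof, is absent from your sketch.
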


\begin{thm} \label{thm:Minus-0}
Let $X=\Gamma^4(1/4)$ and $Y=\pi^{-1}$. For all positive integers $a$ and $m$ satisfying $0<a-2m\equiv 1 \pmod{4}$
\begin{equation*}
\int_0^\infty \frac{x^{a} dx}{(\cos x-\cosh x)^{m}} \in \Q[X,Y]
\end{equation*}
where the degrees of $X$ are always even and are between $(a+3)/2-m$ and $(a-1)/2+m$, inclusive,
while the degrees of $Y$ are between $(a+3)/2-m$ and $(a-5)/2+3m$, inclusive.
\end{thm}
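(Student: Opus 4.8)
The plan is to evaluate $I:=\int_0^\infty x^{a}(\cos x-\cosh x)^{-m}\,dx$ by a single contour integration, reduce it to a $\Q$-linear combination of hyperbolic sums, and then insert the closed-form values of those sums. First I would locate the poles of $f(z):=z^{a}(\cos z-\cosh z)^{-m}$. Solving $\cos z=\cosh z$ gives the diagonal lattice $z=n\pi(1\pm i)$, $n\in\Z$, and since $\tfrac{d}{dz}(\cos z-\cosh z)=-\sin z-\sinh z$ is nonzero at each of these points for $n\neq0$, they are simple zeros of $\cos z-\cosh z$, so $f$ has a pole of order exactly $m$ there; the origin is harmless because $a>2m$. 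I would then integrate $f$ around the boundary of the first-quadrant quarter-disk of radius $R$ (real segment, circular arc, imaginary segment). On the imaginary axis the substitution $z=iy$ together with $\cos(iy)-\cosh(iy)=-(\cos y-\cosh y)$ rewrites $\int_0^{iR}f$ as $\tfrac{i^{a+1}}{(-1)^m}\int_0^{R}x^{a}(\cos x-\cosh x)^{-m}\,dx$, and the hypothesis $a-2m\equiv1\pmod{4}$ forces $i^{a+1}/(-1)^m=-1$. Hence the real-axis and imaginary-axis contributions reinforce, and once the arc integral is shown to vanish as $R\to\infty$ through radii avoiding the poles, I obtain
\begin{equation*}
I=\pi i\sum_{n=1}^{\infty}\Res_{z=n\pi(1+i)}\frac{z^{a}}{(\cos z-\cosh z)^{m}}.
\end{equation*}
I note in passing that $a-2m\equiv1\pmod{4}$ is precisely the congruence that makes the above coefficient equal $2$ rather than $0$, so this step accounts for the arithmetic condition in the statement.

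Next I would compute the order-$m$ residues. Writing $z_n=n\pi(1+i)$ and factoring $\cos z-\cosh z=(z-z_n)h_n(z)$ with $h_n(z_n)\neq0$, the residue is $\tfrac{1}{(m-1)!}$ times the $(m-1)$-th derivative of $z^{a}h_n(z)^{-m}$ evaluated at $z_n$. At $z_n$ the functions $\sin,\cos,\sinh,\cosh$ all collapse onto $(-1)^n$, $(-1)^n\sinh(n\pi)$ and $(-1)^n\cosh(n\pi)$, while $z_n^{a}=(n\pi)^a(1+i)^a$ contributes the powers of $n\pi$ and fixed factors $(1\pm i)$. Consequently each residue is, up to the elementary factors $(-1)^n$ and powers of $1\pm i$, a rational expression in $n\pi$, $\sinh(n\pi)$ and $\cosh(n\pi)$, and summation over $n$ displays $I$ as an explicit finite $\Q$-linear combination of the four classes of hyperbolic sums isolated earlier in the paper. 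Organising the Leibniz expansion of the $(m-1)$-th derivative so that one can read off, uniformly in $m$, which class occurs with which powers of $\sinh(n\pi)$ and $\cosh(n\pi)$ is exactly where I would use the matrix method from linear algebra advertised in the abstract.

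Finally I would substitute the closed-form evaluations of those hyperbolic sums. By the results established earlier in the paper, together with our previous work on hyperbolic series, each of them lies in $\Q[X,Y]$ with $X=\Gamma^4(1/4)$, $Y=\pi^{-1}$, and carries a known weight. The degree ranges in the theorem then follow by a weight count: the factor $z_n^{a}$ fixes the top power $n^{a}$, each of the derivative orders $0,1,\dots,m-1$ perturbs the power of the hyperbolic denominators within a bounded window, and a sum of given weight evaluates to a prescribed span of monomials in $X$ and $Y$. Tracking these contributions yields the parity statement (only even powers of $X$ for the minus sign) and the explicit extreme exponents $(a+3)/2-m$, $(a-1)/2+m$ for $X$ and $(a+3)/2-m$, $(a-5)/2+3m$ for $Y$; the calibration is pinned down by matching the $m=1$ base case, namely the known evaluation $\int_0^\infty x^{4p-1}(\cos x-\cosh x)^{-1}\,dx\in\Q\,\Gamma^{8p}(1/4)\,\pi^{-2p}$.

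The hard part will be the last two steps carried out uniformly in $m$: collapsing the order-$m$ residue into exactly the four catalogued hyperbolic sums for every $m$ at once, rather than case by case, and then proving that the exponents of $X$ and $Y$ never leave the stated windows. Controlling the extremes is the delicate point, since it requires showing simultaneously that no higher-weight term than $(a-1)/2+m$ survives and that the lowest-weight term at $(a+3)/2-m$ does not cancel.
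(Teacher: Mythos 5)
Your first half---the quarter-disk contour, the order-$m$ poles at $n\pi(1\pm i)$, the imaginary-axis substitution, and the observation that $a-2m\equiv 1\pmod 4$ forces $1-(-1)^m i^{a+1}=2$---is exactly the paper's Theorem~\ref{main-thm-one-minus}, so up to that point you are on the paper's route. The genuine gap lies in everything after the residue computation. First, a structural imprecision that matters: the residue of an order-$m$ pole does not produce values of the four catalogued sums at $y=\pi$; it produces $y$-derivatives, of order up to $m-1$, of the sums $\sum_{n\ge1}(-1)^{mn}n^{a+1+2l-m}/\sinh^m(ny)$ evaluated at $y=\pi$ (equivalently, mixed sums such as $\sum n^p\cosh(n\pi)/\sinh^{m+1}(n\pi)$, which are not in the catalogue). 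Moreover, closed forms exist only for the basic sums ${\bar S}_{p,1}$ and $S_{p,2}$ (Lemmas~\ref{lem-one-exform-sinh-Qxz} and~\ref{lem-one-noexform-sinh-Qxz}); the actual role of the matrix method (Theorems~\ref{thm-alter-Sinh-one} and~\ref{thm-noalter-Sinh-one}) is to invert the lower-triangular system relating even $y$-derivatives of $1/\sinh(ny)$ (resp.\ $1/\sinh^2(ny)$) to the powers $1/\sinh^{2l+1}(ny)$ (resp.\ $1/\sinh^{2l+2}(ny)$), thereby rewriting the $\sinh^m$-power sums as $\Q$-combinations of higher derivatives of the basic sums---not, as you propose, to organize the Leibniz expansion of the residue. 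Without this reduction there are no closed forms to substitute at all.

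Second, and more seriously, your plan ``substitute closed-form values, then do a weight count calibrated on the $m=1$ case'' cannot deliver the conclusion, because what is needed is not the value of the basic sums at $y=\pi$ but their $y$-derivatives up to order $2(m-1)$ at $y=\pi$. This forces a separate structural result (the paper's Lemma~\ref{lem:kthDerzn} and Corollary~\ref{cor:kthDerzn}) describing how iterated $d/dy$ acts on expressions $z^ng(v)$, $z^ng(v)v'$, $z^{n+1}z'g(v)$, $z^{n+1}z'g(v)v'$ via $dx/dy=-x(1-x)z^2$, with parity bookkeeping of which derivatives $z^{(j)}$ occur. That bookkeeping is the crux: individual values $z^{(j)}(1/2)$ involve $\Gamma^{\pm2}(1/4)$ and half-integer powers of $\pi$, so a naive weight count does not even give membership in $\Q[X,Y]$. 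Membership, the even parity of the $X$-degrees, and the stated windows all come from the facts that $v'(1/2)=0$ kills the odd-parity terms and that the surviving factors pair as $z(1/2)z^{(2j)}(1/2)\in\Q\,\Gamma^4(1/4)\pi^{-3}$ and $z(1/2)z^{(2j+1)}(1/2)\in\Q\,\pi^{-1}$, after which the extreme exponents are obtained by explicit inequalities on index sums, uniformly in $m$. Matching the $m=1$ evaluation fixes nothing for general $m$: it is consistent with the bounds but cannot establish them, nor can it rule out cancellation or overflow at the extremes. You correctly flagged this as ``the hard part,'' but the proposal contains no mechanism that resolves it.
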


We remark that all the degree bounds in Theorem~\ref{thm:Plus-0} and Theorem~\ref{thm:Minus-0} are optimal. We record six examples to illustrate Theorem~\ref{thm:Plus-0} and Theorem~\ref{thm:Minus-0} (for the first four examples, see \cite[Examples 4.2 and 4.5]{RXZ2023} and \cite[Examples 6.4 and 8.13]{XuZhao-2022}).
\begin{align*}
&\int_0^\infty \frac{x^{9}}{(\cos x+\cosh x)^2}dx=-\frac{189\Gamma^{16}(1/4)}{5\cdot 2^{15}\pi^{4}}+\frac{9\Gamma^{24}(1/4)}{2^{21}\pi^{8}},\\
&\int_0^\infty \frac{x^{9}}{(\cos x-\cosh x)^2}dx=\frac{27\Gamma^{16}(1/4)}{5\cdot 2^{12}\pi^{4}}-\frac{\Gamma^{24}(1/4)}{2^{18}\pi^{8}},\\
&\int_{0}^{\infty}\frac{x^{11}\, dx}{(\cos x-\cosh x)^3}=-\frac{4455\Gamma^{16}}{2^{15}\pi^4}-\frac{189\Gamma^{24}}{2^{20}\pi^6}+\frac{297\Gamma^{24}}{2^{18}\pi^7}-\frac{935\Gamma^{24}}{2^{20}\pi^8}
-\frac{195\Gamma^{32}}{2^{27}\pi^{12}},\\
&\int_{0}^{\infty}\frac{x^{13}\, dx}{(\cos x+\cosh x)^3}=\frac{405405\Gamma^{20}}{2^{20}\pi^5}+\frac{68607\Gamma^{28}}
{2^{27}\pi^7}-\frac{107757\Gamma^{28}}{2^{25}\pi^8}+\frac{84591\Gamma^{28}}{2^{25}\pi^9}+\frac{17679\Gamma^{36}}{2^{32}\pi^{13}},\\
&\int_0^\infty \frac{x^{33}}{(\cos x-\cosh x)^2}dx=\frac{55168390953244107 \Gamma^{64}}{85\cdot 2^{36} \pi ^{16}}-\frac{135515509591329 \Gamma^{72}}{2^{42} \pi ^{20}},\\
&\int_0^\infty \frac{x^{33}}{(\cos x+\cosh x)^2}dx=-\frac{1807702666364949654069 \Gamma^{64}}{85\cdot 2^{51}  \pi ^{16}}+\frac{4440707733798260001 \Gamma^{72}}{2^{57} \pi ^{20}},
\end{align*}
where $\Gamma:=\Gamma(1/4)$.

\section{Notation and Some Preliminary Results}
The Gaussian or ordinary hypergeometric function ${_2}F_1(a,b;c;x)$ is defined for $|x|<1$ by the power series
\begin{equation*}
{_2}F_1(a,b;c;x)=\sum\limits_{n=0}^\infty \frac{(a)_n(b)_n}{(c)_n} \frac {x^n}{n!}\quad (a,b,c\in\mathbb{C}).
\end{equation*}
Here $(a)_n$ denotes the \emph{ascending Pochhammer symbol} defined by
\begin{equation*}
(a)_n:=\frac {\Gamma(a+n)}{\Gamma(a)}=a(a+1)\cdots(a+n-1) \quad {\rm and}\quad (a)_0:=1,
\end{equation*}
where $a\in \mathbb{C}$ is any complex number and $n\in\N_0$ is a nonnegative integer. Here $\Gamma(s)$ denotes
the Gamma function. When $\Re(s) > 0$
\[\Gamma(s) := \int\limits_0^\infty  {{e^{ - t}}{t^{s - 1}}dt}.\]

The complete elliptic integral of the first kind is defined by (Whittaker and Watson \cite{WW1966})
\begin{align}
K:=K(x):=K(k^2):=\int\limits_{0}^{\pi/2}\frac {d\varphi}{\sqrt{1-k^2\sin^2\varphi}}=\frac {\pi}{2} {_2}F_{1}\left(\frac {1}{2},\frac {1}{2};1;k^2\right).\label{defn-elliptic-first}
\end{align}
Here $x=k^2$ and $k\ (0<k<1)$ is the modulus of $K$. The complementary modulus $k'$ is defined by $k'=\sqrt {1-k^2}$. Furthermore,
\begin{equation*}
K':=K(k'^2)=\int\limits_{0}^{\pi/2}\frac {d\varphi}{\sqrt{1-k'^2\sin^2\varphi}}=\frac {\pi}{2} {_2}F_{1}\left(\frac {1}{2},\frac {1}{2};1;1-k^2\right).
\end{equation*}
Similarly, the complete elliptic integral of the second kind is denoted by (Whittaker and Watson \cite{WW1966})
\begin{equation*}
E:=E(x):=E(k^2):=\int\limits_{0}^{\pi/2} \sqrt{1-k^2\sin^2\varphi}d\varphi=\frac {\pi}{2} {_2}F_{1}\left(-\frac {1}{2},\frac {1}{2};1;k^2\right).
\end{equation*}

In order to better state our main results, we shall henceforth adopt the notations of Ramanujan (see Berndt's book \cite{B1991}). Let
\begin{align}\label{den-z-diff}
&x:=k^2,\quad y:=y(x):=\pi \frac {K'}{K}, \quad q:=q(x):=e^{-y},\nonumber \\
&z:=z(x):=\frac {2}{\pi}K,\quad z':=\frac{dz}{dx},\quad z":=\frac{d^2z}{dx^2},\quad z^{(n)}:=\frac{d^nz}{dx^n}.
\end{align}
Using the identity $(a)_{n+1}=a(a+1)_n$, it is easily shown that
\[\frac {d}{dx}{_2}F_1(a,b;c;x)=\frac{ab}{c}{_2}F_1(a+1,b+1;c+1;x)\]
and more generally,
\begin{equation*}
\frac {d^n}{dx^n}{_2}F_1(a,b;c;x)=\frac{(a)_n(b)_n}{(c)_n}{_2}F_1(a+n,b+n;c+n;x)\quad(n\in\N_0).\label{1.7}
\end{equation*}
Then,
\begin{equation*}
\frac {d^nz}{dx^n}=\frac{\left(1/2\right)^2_n}{n!}{_2}F_1\left(\frac{1}{2}+n,\frac{1}{2}+n;1+n;x\right).
\end{equation*}
Applying the identity (see Andrews-Askey-Roy's book \cite[Chapter 3, Section 3.5, Theorem 3.5.4(i)]{A2000})
\begin{equation*}
{_2}F_1\left(a,b;\frac {a+b+1}{2};\frac 1{2}\right)=\frac{\Gamma\left(\frac{1}{2}\right)\Gamma\left(\frac{a+b+1}{2}\right)}{\Gamma\left(\frac{a+1}{2}\right)\Gamma\left(\frac{b+1}{2}\right)},
\end{equation*}
letting $a=b=1/2+n$ and noting the fact that $\Gamma(1/2)=\sqrt{\pi}$, we can find by an elementary calculation that 
\begin{align}\label{equ:znDer}
\left.\frac {d^nz}{dx^n}\right|_{x=1/2}=\frac{(1/2)^2_n\sqrt{\pi}}{\Gamma^2\left(\frac{n}{2}+\frac {3}{4}\right)}.
\end{align}
Clearly, for any $m\in \N$, we have 
\begin{align*}
\left.\frac {d^{2m}z}{dx^{2m}}\right|_{x=1/2}\in \mathbb{Q} \frac{\Gamma^2(1/4)}{\pi^{3/2}}\quad \text{and} \quad \left.\frac {d^{2m-1}z}{dx^{2m-1}}\right|_{x=1/2}\in \mathbb{Q}\frac{\sqrt{\pi}}{\Gamma^2(1/4)}.
\end{align*}
In particular, taking $x=1/2,n=0,1,2,3$ in \eqref{den-z-diff} and applying \eqref{equ:znDer}, we obtain that
\begin{align*}
&y\Big(\frac 1{2}\Big)=\pi,\quad  z\Big(\frac 1{2}\Big)=\frac{\Gamma^2(1/4)}{2\pi^{3/2}},\quad
z'\Big(\frac 1{2}\Big)=\frac {4\sqrt{\pi}}{\Gamma^2(1/4)},\\
&z''\Big(\frac 1{2}\Big)=\frac{\Gamma^2(1/4)}{2\pi^{3/2}},\quad
z^{(3)}\Big(\frac{1}{2}\Big)=\frac{36\sqrt{\pi}}{\Gamma^2(1/4)},
\end{align*}
where we have used the two classical relations
\begin{equation*}
\Gamma(s+1)=s\Gamma(s)
\end{equation*}
and
\begin{equation*}
\Gamma(s)\Gamma(1-s)=\frac{\pi}{\sin(\pi s)}\quad (s\in \mathbb{C}\setminus \mathbb{Z}).
\end{equation*}

\section{Berndt-Type Integrals via Hyperbolic Series}
To save space, throughout the rest of the paper we will put
\begin{equation*}
\tn=\frac{2n-1}2.
\end{equation*}
We need the following lemma.
\begin{lem}\emph{(cf.\cite{XZ2023})} Let $n$ be an integer. Then we have
\begin{align}
&\frac{(-1)^n}
{{\cosh \left(\frac{1+i}{2}z \right)}}
\buildrel{z\to\tn\pi(1+i)}\over{=\joinrel=\joinrel=\joinrel=\joinrel=\joinrel=} 2i \left\{\frac1{1+i}\cdot\frac{1}{z-\tn\pi(1+i)}\atop +\sum_{k=1}^\infty (-1)^k \frac{{\bar \zeta}(2k)}{\pi^{2k}}\left(\frac{1+i}{2}\right)^{2k-1}\left(z-\tn\pi(1+i)\right)^{2k-1} \right\},\label{eq-cosh-1}\\
&\frac{(-1)^n}
{{\sinh \left(\frac{1+i}{2}z \right)}}
\buildrel{z \to n\pi (1+i)}\over{=\joinrel=\joinrel=\joinrel=\joinrel=\joinrel=}   2\left\{\frac1{1+i}\cdot\frac{1}{z-n\pi(1+i)}\atop +\sum_{k=1}^\infty (-1)^k \frac{{\bar \zeta}(2k)}{\pi^{2k}}\left(\frac{1+i}{2}\right)^{2k-1}\left(z-n\pi(1+i)\right)^{2k-1} \right\}\label{eq-sinh-1}.
\end{align}
\end{lem}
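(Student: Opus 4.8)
The plan is to reduce both expansions to the single Laurent expansion of $1/\sinh u$ about $u=0$, together with a linear change of variable. Since $\cosh$ vanishes exactly at the odd half-integer multiples of $\pi i$, the zeros of $\cosh\bigl(\tfrac{1+i}{2}z\bigr)$ occur where $\tfrac{1+i}{2}z=\tn\pi i$, i.e.\ at $z=\tn\pi(1+i)$; likewise $\sinh\bigl(\tfrac{1+i}{2}z\bigr)$ vanishes where $\tfrac{1+i}{2}z=n\pi i$, i.e.\ at $z=n\pi(1+i)$. In each case I would introduce the local coordinate $u=\tfrac{1+i}{2}\bigl(z-\tn\pi(1+i)\bigr)$, respectively $u=\tfrac{1+i}{2}\bigl(z-n\pi(1+i)\bigr)$, which sends the relevant zero to $u=0$.

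Next I would apply the addition formulas to isolate the factor $1/\sinh u$. Writing $w_0$ for the corresponding zero, the identity $\cosh(w_0+u)=\cosh w_0\cosh u+\sinh w_0\sinh u$ together with $\cosh(\tn\pi i)=0$ and $\sinh(\tn\pi i)=i(-1)^{n-1}$ gives $\cosh\bigl(\tfrac{1+i}{2}z\bigr)=i(-1)^{n-1}\sinh u$, hence $\tfrac{(-1)^n}{\cosh(\cdots)}=\tfrac{i}{\sinh u}$. Similarly $\sinh(n\pi i)=0$ and $\cosh(n\pi i)=(-1)^n$ yield $\sinh\bigl(\tfrac{1+i}{2}z\bigr)=(-1)^n\sinh u$, hence $\tfrac{(-1)^n}{\sinh(\cdots)}=\tfrac{1}{\sinh u}$. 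This is exactly what produces the two different prefactors $2i$ and $2$ in the statement.

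It then remains to expand $1/\sinh u$ about $u=0$. I would use the Mittag--Leffler expansion $1/\sinh u=\tfrac1u+2u\sum_{n\geq1}\tfrac{(-1)^n}{u^2+n^2\pi^2}$, expand each summand as a geometric series in $u^2/(n^2\pi^2)$, and interchange the two sums (valid for $0<|u|<\pi$). The inner sums collapse into the alternating zeta values ${\bar\zeta}(2k)=\sum_{n\geq1}(-1)^{n-1}n^{-2k}$, giving
\[
\frac{1}{\sinh u}=\frac1u+2\sum_{k\geq1}(-1)^k\frac{{\bar\zeta}(2k)}{\pi^{2k}}u^{2k-1}.
\]
Substituting $u=\tfrac{1+i}{2}t$ turns $1/u$ into $\tfrac{2}{1+i}\cdot\tfrac1t$ and $u^{2k-1}$ into $\bigl(\tfrac{1+i}{2}\bigr)^{2k-1}t^{2k-1}$, where $t=z-\tn\pi(1+i)$ in \eqref{eq-cosh-1} and $t=z-n\pi(1+i)$ in \eqref{eq-sinh-1}. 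Multiplying by the prefactor $i$, respectively $1$, from the previous step then reproduces the two displayed formulas verbatim.

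I expect the only genuine difficulty to be bookkeeping rather than anything conceptual: one must keep the powers of $(1+i)/2$, the signs $(-1)^n$, and the $i$-versus-$2$ prefactors consistent throughout. The only points requiring a word of justification are the interchange of summation in the Mittag--Leffler step, which holds in the punctured disc $0<|u|<\pi$ around each zero, and the precise normalization of the $\bar\zeta$-values. If one prefers to start instead from the Bernoulli-number form $1/\sinh u=\tfrac1u-\sum_{k\geq1}\tfrac{2(2^{2k-1}-1)B_{2k}}{(2k)!}u^{2k-1}$, the same conclusion follows after inserting $\zeta(2k)=\tfrac{(-1)^{k+1}(2\pi)^{2k}B_{2k}}{2(2k)!}$ and ${\bar\zeta}(2k)=(1-2^{1-2k})\zeta(2k)$, which is a convenient cross-check that the alternating zeta values emerge with exactly the claimed coefficient.
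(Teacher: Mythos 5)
Your proof is correct. Note that the paper does not actually prove this lemma — it is quoted verbatim from \cite{XZ2023} with a ``cf.'' citation — so there is no in-paper argument to compare against; your derivation is a valid self-contained one and follows the standard route implicit in the cited work. All the delicate points check out: the zeros are located correctly ($\cosh$ vanishing at odd half-integer multiples of $\pi i$, $\sinh$ at integer multiples); the addition formulas give $\cosh\big(\tfrac{1+i}{2}z\big)=i(-1)^{n-1}\sinh u$ and $\sinh\big(\tfrac{1+i}{2}z\big)=(-1)^{n}\sinh u$ with $u=\tfrac{1+i}{2}(z-z_0)$, which is precisely where the prefactors $2i$ versus $2$ come from; and the Mittag--Leffler expansion with the sum interchange on $0<|u|<\pi$ yields
\begin{equation*}
\frac{1}{\sinh u}=\frac1u+2\sum_{k\geq 1}(-1)^k\frac{{\bar \zeta}(2k)}{\pi^{2k}}u^{2k-1},
\end{equation*}
which after the substitution $u=\tfrac{1+i}{2}t$ reproduces both displays exactly. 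Your Bernoulli-number cross-check is also consistent with the conventions used later in the paper, namely ${\bar \zeta}(2k)=(1-2^{1-2k})\zeta(2k)$ together with Euler's formula \eqref{equ=even-zeta}, so the coefficients feed correctly into the proof of the proposition that follows the lemma.
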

Here  $\zeta(s)$ and $\bar{\zeta}(s)$ are \emph{Riemann zeta function} and \emph{alternating Riemann zeta function}, respectively, defined by
\begin{align*}
\zeta(s):=\sum_{n=1}^{\infty}\frac{1}{n^s}\quad\left(\Re\left(s\right)>1\right)\quad \text{and}\quad \bar{\zeta}(s):=\sum_{n=1}^{\infty}\frac{(-1)^{n-1}}{n^s}\quad\left(\Re\left(s\right)>0\right).
\end{align*}
For even $s=2m\ (m\in \N)$ Euler proved the famous formula
\begin{align}\label{equ=even-zeta}
\zeta(2m) = -\frac12\frac{B_{2m}}{(2m)!}(2\pi i)^{2m},
\end{align}
where $B_{2m}\in \Q$ ($B_0=1,B_1=-\frac1{2},B_2=\frac1{6},B_4=\frac1{30},\ldots$) are \emph{Bernoulli numbers}  defined by the generating function
\begin{equation*}
 \frac{x}{e^x-1}=\sum_{n=0}^\infty \frac{B_n}{n!}x^n.
\end{equation*}

\begin{pro} Set $z_n:=\tn\pi(1+i)$ and $y_n:=n\pi (1+i)$. For positive integers $m$ and $n$, we have
\begin{align}\label{equ-power-m-hybolic-function}
\frac{(-1)^{mn}}
{{\cosh^m \left(\frac{1+i}{2}z \right)}}
\buildrel{z\to z_n}\over{=\joinrel=\joinrel=} (i+1)^m \sum_{l=0}^\infty i^l \ga_{l,m}  (z-z_n)^{2l-m}
\end{align}
and
\begin{align}\label{equ-power-m-hybolic-sinh-function}
\frac{(-1)^{mn}}
{{\sinh^m \left(\frac{1+i}{2}z \right)}}
\buildrel{z\to y_n}\over{=\joinrel=\joinrel=} (1-i)^m \sum_{l=0}^\infty i^l \ga_{l,m}  (z-y_n)^{2l-m},
\end{align}
where
\begin{align}\label{defn-Coffe-C}
\ga_{l,m}:=(-1)^m 2^l \sum_{k_1+\cdots+k_m=l,\atop k_j\geq 0, \forall j} \prod\limits_{j=1}^m \frac{(1-2^{1-2k_j})B_{2k_j}}{(2k_j)!}\in \Q.
\end{align}
In particular, we have $\ga_{0,m}=1$ and $\ga_{1,m}=-\frac{m}{12}$.
\end{pro}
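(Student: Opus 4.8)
The plan is to lift the case $m=1$, which is essentially the Lemma, to general $m$ by raising the corresponding Laurent expansion to the $m$-th power; the combinatorics of this power produces exactly the multinomial sum \eqref{defn-Coffe-C}.

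First I would recast the $m=1$ statement of the Lemma into the Bernoulli-number form required by \eqref{equ-power-m-hybolic-function}. Inserting $\bar\zeta(2k)=(1-2^{1-2k})\zeta(2k)$ together with Euler's formula \eqref{equ=even-zeta} into \eqref{eq-cosh-1} gives
\[
(-1)^k\frac{\bar\zeta(2k)}{\pi^{2k}}=-(1-2^{1-2k})\frac{2^{2k-1}B_{2k}}{(2k)!},
\]
and after absorbing the powers of $(1+i)/2$ through $(1+i)^2=2i$, the Lemma turns into
\[
\frac{(-1)^{n}}{\cosh\!\left(\frac{1+i}{2}z\right)}\buildrel{z\to z_n}\over{=\joinrel=\joinrel=}(i+1)\sum_{l=0}^\infty i^l\ga_{l,1}\,(z-z_n)^{2l-1},\qquad \ga_{l,1}=-2^l\frac{(1-2^{1-2l})B_{2l}}{(2l)!}.
\]
This is \eqref{equ-power-m-hybolic-function} for $m=1$: the polar term matches because $2i/(1+i)=i+1$, and the regular terms match after the elementary identity $2i(1+i)^{2l-1}=(i+1)2^l i^l$, which I would record at this point.

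The decisive step is the power-raising. Writing $w=z-z_n$ and factoring out the simple pole,
\[
\frac{(-1)^{n}}{\cosh\!\left(\frac{1+i}{2}z\right)}=\frac{i+1}{w}\,g(w),\qquad g(w):=\sum_{l=0}^\infty i^l\ga_{l,1}\,w^{2l},
\]
the function $g$ is holomorphic near $0$ with $g(0)=\ga_{0,1}=1$, so since $(-1)^{mn}=\big((-1)^n\big)^m$ raising to the $m$-th power gives
\[
\frac{(-1)^{mn}}{\cosh^m\!\left(\frac{1+i}{2}z\right)}=\frac{(i+1)^m}{w^m}\,g(w)^m.
\]
Everything then reduces to the Cauchy product $[w^{2l}]g(w)^m=\sum_{l_1+\cdots+l_m=l}\prod_{j=1}^m i^{l_j}\ga_{l_j,1}=i^l\sum_{l_1+\cdots+l_m=l}\prod_{j=1}^m\ga_{l_j,1}$; substituting the closed form of $\ga_{l_j,1}$ and pulling out the factors $(-1)^m$ and $2^{l_1+\cdots+l_m}=2^l$ identifies the inner sum with $\ga_{l,m}$ of \eqref{defn-Coffe-C}, yielding \eqref{equ-power-m-hybolic-function}. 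For \eqref{equ-power-m-hybolic-sinh-function} I would run the identical argument from \eqref{eq-sinh-1}: the only change is that the polar coefficient is now $2/(1+i)=1-i$, so the prefactor becomes $(1-i)^m$, while the holomorphic factor $g$ — hence the coefficients $\ga_{l,m}$ — is literally the same series.

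Finally, the two special values fall out of the same convolution: for $l=0$ the only composition is $l_1=\cdots=l_m=0$, so $\ga_{0,m}=\ga_{0,1}^m=1$, while for $l=1$ exactly one index equals $1$ and the rest vanish, giving $\ga_{1,m}=m\,\ga_{1,1}=-m/12$. I do not expect a genuine obstacle; the only delicate bookkeeping is the normalization of the $m=1$ expansion (the identity $2i(1+i)^{2l-1}=(i+1)2^l i^l$ and its sinh analogue $2(1+i)^{2l-1}=(1-i)2^l i^l$), after which manipulating the convergent Laurent series is fully justified because $g(0)=1\neq0$.
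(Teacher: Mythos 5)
Your proposal is correct and follows essentially the same route as the paper: both proofs lift the Lemma's $m=1$ Laurent expansion to general $m$ by an $m$-fold Cauchy product, using $\bar\zeta(s)=(1-2^{1-s})\zeta(s)$ and Euler's formula \eqref{equ=even-zeta} to produce the Bernoulli-number coefficients \eqref{defn-Coffe-C} (the paper merely performs the conversion to Bernoulli numbers after the power-raising rather than before, as you do, and your identities $2i(1+i)^{2l-1}=(i+1)2^l i^l$ and $2(1+i)^{2l-1}=(1-i)2^l i^l$ are exactly the normalization the paper handles implicitly).
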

\begin{proof}
We only prove that \eqref{equ-power-m-hybolic-function} holds, the proof of \eqref{equ-power-m-hybolic-sinh-function} is completely similar to that of \eqref{equ-power-m-hybolic-function}.
We note that the \eqref{eq-cosh-1} can be rewritten as
\begin{align*}
&\frac{(-1)^n}
{{\cosh \left(\frac{1+i}{2}z \right)}}
\buildrel{z\to z_n}\over{=\joinrel=\joinrel=} 2i \sum_{k=0}^\infty (-1)^k \frac{{\bar \zeta}(2k)}{\pi^{2k}}\left(\frac{1+i}{2}\right)^{2k-1}\left(z-z_n\right)^{2k-1},
\end{align*}
where ${\bar \zeta}(0):=\frac1{2}$. Hence, by a direct calculation, one obtains
\begin{align*}
&\frac{(-1)^{mn}}
{{\cosh^m\left(\frac{1+i}{2}z \right)}}
\buildrel{z\to z_n}\over{=\joinrel=\joinrel=} (2i)^m \sum_{l=0}^\infty (-1)^l \left(\frac{1+i}{2}\right)^{2l-m} \left\{\sum_{k_1+\cdots+k_m=l,\atop k_j\geq 0, \forall j} \frac{{\bar \zeta}(2k_1)\cdots {\bar \zeta}(2k_m)}{\pi^{2k_1+\cdots+2k_m}} \right\}  \left(z-z_n\right)^{2l-m}.
\end{align*}
Noting the fact that for $\Re(s)>1$,
\begin{align*}
{\bar \zeta}(s)=(1-2^{1-s})\zeta(s)\quad \text{and}\quad \zeta(0):=-\frac1{2},
\end{align*}
and applying \eqref{equ=even-zeta} yields the desired evaluation \eqref{equ-power-m-hybolic-function} with an elementary calculation.
\end{proof}

\begin{thm}\label{main-thm-one-plus}
For any positive integer $m$ and real $a>0$, we have
\begin{align*}
&\frac{2^{m-2}(1-i^{a+1})}{(1+i)^{a-1}}\int_0^\infty \frac{x^a dx}{(\cos x+\cosh x)^m}\nonumber\\
=&\, \sum_{l=0}^{[(m-1)/2]}\sum_{j=0}^{m-1-2l}\frac{(-1)^{l+1}2^l \ga_{l,m}}{(m-1-2l-j)!}\binom{a}{j}\pi^{a+1-j}
\lim_{y\to \pi}\frac{d^{m-1-2l-j}}{dy^{m-1-2l-j}}\left\{\sum_{n=1}^\infty (-1)^{mn} \frac{\tn^{a+1+2l-m}}{\cosh^m (\tn y)}\right\},
\end{align*}
where $\ga_{l,m}$ is given by \eqref{defn-Coffe-C}.
\end{thm}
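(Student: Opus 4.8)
The plan is to realise the integral through a contour integration of the meromorphic function
$$F(z)=\frac{z^a}{\cosh^m\!\left(\frac{1+i}{2}z\right)\cosh^m\!\left(\frac{1-i}{2}z\right)},$$
which equals $2^m z^a/(\cos z+\cosh z)^m$ by the product formula $\cos z+\cosh z=2\cosh\!\left(\frac{1+i}{2}z\right)\cosh\!\left(\frac{1-i}{2}z\right)$, itself a consequence of $2\cosh A\cosh B=\cosh(A+B)+\cosh(A-B)$ with $A+B=z,\ A-B=iz$. I fix the principal branch of $z^a$ (cut along the negative imaginary axis), holomorphic on the closed first quadrant away from the poles. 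First I would integrate $F$ over the boundary of the sector $\{0\le\arg z\le\pi/2,\ |z|\le R\}$, counterclockwise. On the positive real axis this reproduces $2^m\int_0^R x^a(\cos x+\cosh x)^{-m}\,dx$; on the positive imaginary axis the substitution $z=it$, together with $\cos(it)+\cosh(it)=\cos t+\cosh t$ and $dz=i\,dt$, produces $-i^{a+1}$ times the same integrand, so the two straight edges combine to give $(1-i^{a+1})2^m\int_0^\infty x^a(\cos x+\cosh x)^{-m}\,dx$ in the limit. Only the poles $z_n=\tn\pi(1+i)$ of the first cosh-factor (those with $n\ge1$) lie inside the sector, the poles of the second factor sitting on the rays $\arg z=3\pi/4$ or $-\pi/4$; hence the residue theorem yields $(1-i^{a+1})2^m\int=2\pi i\sum_{n\ge1}\Res_{z=z_n}F$.

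The residues are where the Proposition enters. Writing $g(z)=z^a/\cosh^m\!\left(\frac{1-i}{2}z\right)$, which is holomorphic near each $z_n$ with $\frac{1-i}{2}z_n=\tn\pi$, I substitute the Laurent expansion $\cosh^{-m}\!\left(\frac{1+i}{2}z\right)=(-1)^{mn}(1+i)^m\sum_{l}i^l\ga_{l,m}(z-z_n)^{2l-m}$ from \eqref{equ-power-m-hybolic-function} and read off the coefficient of $(z-z_n)^{-1}$, which selects the Taylor coefficient $g^{(m-1-2l)}(z_n)/(m-1-2l)!$ for $0\le l\le[(m-1)/2]$. Expanding $g^{(m-1-2l)}(z_n)$ by the Leibniz rule splits it into $(z^a)^{(j)}|_{z_n}=\binom{a}{j}j!\,(\tn\pi)^{a-j}(1+i)^{a-j}$ and derivatives of $\cosh^{-m}\!\left(\frac{1-i}{2}z\right)$. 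The crucial bridge to the statement is the chain-rule identity
$$\frac{d^{r}}{dz^{r}}\frac{1}{\cosh^m\!\left(\frac{1-i}{2}z\right)}\bigg|_{z=z_n}=\left(\frac{1-i}{2\tn}\right)^{r}\frac{d^{r}}{dy^{r}}\frac{1}{\cosh^m(\tn y)}\bigg|_{y=\pi},$$
valid because both sides reduce to $(d/dw)^r\cosh^{-m}w$ at $w=\tn\pi$, with $r=m-1-2l-j$. Summing over $n$ and commuting the $y$-derivative past the (uniformly convergent) $n$-sum produces exactly the inner object $\frac{d^{m-1-2l-j}}{dy^{m-1-2l-j}}\{\sum_n(-1)^{mn}\tn^{a+1+2l-m}\cosh^{-m}(\tn y)\}$ of the theorem.

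What remains is bookkeeping of the prefactors. Collecting $\tn$-powers gives $\tn^{a-j}\tn^{-(m-1-2l-j)}=\tn^{a+1+2l-m}$, while the $(1\pm i)$-powers combine through $\frac{1-i}{2}=(1+i)^{-1}$ into $(1+i)^m(1+i)^{a-j}(1+i)^{-(m-1-2l-j)}=(1+i)^{a+1+2l}$, which is \emph{independent of} $j$; using $(1+i)^2=2i$ this equals $(1+i)^{a-1}2^{1+l}i^{1+l}$. Feeding this, together with the $2\pi i$ of the residue theorem and the $i^l$ of the Laurent expansion, back into $(1-i^{a+1})2^m\int$ and dividing through yields precisely the constant $(-1)^{l+1}2^l\ga_{l,m}\binom{a}{j}\pi^{a+1-j}/(m-1-2l-j)!$ of the statement, the single extra power of $\pi$ coming from the residue-theorem factor. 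The hard part will be the genuine analytic input: the vanishing of the arc contribution. One must take the radii $R=R_N$ midway between consecutive poles (e.g. $|z_n|=\tn\pi\sqrt2$ forces a spacing bounded below) so that $|\cos z+\cosh z|$ admits a uniform exponential lower bound on the arc, dominating the polynomial growth $|z^a|\sim R^a$, and simultaneously one must justify convergence of $\sum_{n\ge1}\Res_{z=z_n}F$ and the interchange of summation with the limit $y\to\pi$; by contrast, the algebraic simplification above is routine once the chain-rule identity is in place.
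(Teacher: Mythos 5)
Your proposal follows essentially the same route as the paper's own proof: the same quarter-circle contour with the factorization $\cos z+\cosh z=2\cosh\bigl(\tfrac{1+i}{2}z\bigr)\cosh\bigl(\tfrac{1-i}{2}z\bigr)$, the Laurent expansion \eqref{equ-power-m-hybolic-function} to extract residues at $z_n=\tn\pi(1+i)$ via the Leibniz rule, the same chain-rule reduction to $y$-derivatives at $y=\pi$, and the residue theorem combining the two straight edges into $(1-i^{a+1})$ times the integral; your constant bookkeeping reproduces the paper's computation exactly. The only difference is that you flag explicitly the analytic points the paper treats as routine (branch of $z^a$, choice of radii $R_N$ between poles for the arc estimate, and the interchange of summation with the limit $y\to\pi$), which is a refinement rather than a different argument.
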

\begin{proof}
Let $z=x+iy$ for $x,y\in \R$. Consider
\begin{align*}
\lim\limits_{R\to\infty}\int_{C_R}\frac{z^a\, dz}{(\cos z+\cosh z)^m}=\lim\limits_{R\to\infty}\int_{C_R}F\left(z\right)\, dz,
\end{align*}
where $C_R$ denotes the positively oriented quarter-circular contour consisting of the interval $[0,R]$, the quarter-circle $\Gamma_R$ with $|z|=R$ and $0\leq \arg z\leq \pi/2$, and $[iR,0]$ (i.e., the line segment from $i R$ to $0$ on the imaginary axis).

Clearly, there exist poles of order $m$ when
\begin{align*}
\cos z+\cosh z=2\cosh\left\{\frac1{2}(z+iz)\right\}\cosh\left\{\frac1{2}(z-iz)\right\}=0.
\end{align*}
Hence, there exist one set of poles at
\begin{align*}
z_n:=\frac{(2n-1)\pi i}{1+i}=\tn (1+i)\pi ,\quad n\in \Z,
\end{align*}
and
\begin{align*}
s_n:=\frac{(2n-1)\pi i}{1-i}=\tn(i-1)\pi ,\quad n\in \Z.
\end{align*}
The only poles lying inside $C_R$ are $z_n,\ n\geq 1,\ |z_n|<R$. Using \eqref{equ-power-m-hybolic-function}, if $z\rightarrow z_n$ then
\begin{align*}
F\left(z\right)\buildrel{z\rightarrow z_n}\over{=\joinrel=}\frac{(-1)^{mn}(i+1)^m}{2^m} \frac{z^a}{\cosh^m\left(\frac{1-i}{2}z\right)} \sum_{l=0}^\infty i^l \ga_{l,m}  (z-z_n)^{2l-m}.
\end{align*}
The residue $\underset{z=z_n}\Res \{F(z)\}$ at such a pole $z_n$ is give by
\begin{align*}
\underset{z=z_n}\Res \{F(z)\}&=\frac{1}{(m-1)!}\lim_{z\rightarrow z_n} \frac{d^{m-1}}{dz^{m-1}} \left\{(z-z_n)^m F(z)\right\}\\
&=\frac{(-1)^{mn}(i+1)^m}{(m-1)!2^m}  \sum_{l=0}^\infty i^l \ga_{l,m} \lim_{z\rightarrow z_n} \frac{d^{m-1}}{dz^{m-1}} \left\{(z-z_n)^{2l}\frac{z^a}{\cosh^m\left(\frac{1-i}{2}z\right)} \right\}\\
&=\frac{(-1)^{mn}(i+1)^m}{(m-1)!2^m}  \sum_{l=0}^{[(m-1)/2]} i^l \ga_{l,m} \binom{m-1}{2l}(2l)! \lim_{z\rightarrow z_n}\frac{d^{m-1-2l}}{dz^{m-1-2l}} \left\{\frac{z^a}{\cosh^m\left(\frac{1-i}{2}z\right)} \right\}\\
&=\frac{(-1)^{mn}(i+1)^m}{2^m}  \sum_{l=0}^{[(m-1)/2]} \frac{i^l  \ga_{l,m}}{(m-1-2l)!}\lim_{z\rightarrow z_n}\sum_{j=0}^{m-1-2l} \binom{m-1-2l}{j} \\&\quad\quad\quad\quad\quad\times a(a-1)\cdots(a-j+1)z^{a-j}\frac{d^{m-1-2l-j}}{dz^{m-1-2l-j}}  \left\{\frac{1}{\cosh^m\left(\frac{1-i}{2}z\right)} \right\}\\
&=\frac{(-1)^{mn}(i+1)^m}{2^m} \sum_{l=0}^{[(m-1)/2]}\sum_{j=0}^{m-1-2l}  \frac{i^l  \ga_{l,m}}{(m-1-2l-j)!}\binom{a}{j}z_n^{a-j} \\&\quad\quad\quad\quad\quad\quad\quad\quad\quad\times \lim_{z\rightarrow z_n}\frac{d^{m-1-2l-j}}{dz^{m-1-2l-j}}  \left\{\frac{1}{\cosh^m\left(\frac{1-i}{2}z\right)} \right\}.
\end{align*}
Noting that
\begin{align*}
\lim_{z\rightarrow z_n}\frac{d^{p}}{dz^{p}}  \left\{\frac{1}{\cosh^m\left(\frac{1-i}{2}z\right)} \right\}=\left(\frac{1}{(1+i)\tn}\right)^p\lim_{y\rightarrow \pi}\frac{d^{p}}{dy^{p}}\left\{\frac{1}{\cosh^m(\tn y)} \right\},
\end{align*}
where we have used $z=\frac{2\tn}{1-i}y$. Hence, we obtain
\begin{align*}
\underset{z=z_n}\Res \{F(z)\}&=\frac{(1+i)^{a+1}(-1)^{mn}}{2^{m}}\sum_{l=0}^{[(m-1)/2]}\sum_{j=0}^{m-1-2l}\frac{(-1)^{l}2^l \ga_{l,m}}{(m-1-2l-j)!}\binom{a}{j}\pi^{a-j}\nonumber\\&\quad\quad\quad\quad\quad\quad\quad\quad\quad\quad\quad\quad\times\lim_{y\rightarrow \pi}\frac{d^{m-1-2l-j}}{dy^{m-1-2l-j}}\left\{\frac{\tn^{a+1+2l-m}}{\cosh^m(\tn y)} \right\}.
\end{align*}
Note that $z=i y$ for the integral over $[iR,0]$. Hence,
\begin{align*}
\int_{iR}^{0}\frac{z^a\, dz}{(\cos z+\cosh z)^m}=-i^{a+1}\int_{0}^{R}\frac{y^a\, dy}{\left(\cos y+\cosh y\right)^m}.
\end{align*}
It is easy to show that, as $R\rightarrow \infty$
\begin{align*}
\lim\limits_{R\to\infty}\int_{\Gamma_R}\frac{z^a\, dz}{(\cos z+\cosh z)^m}=o\left(1\right).
\end{align*}
Applying the residue theorem, letting $R\rightarrow \infty$, we conclude that
\begin{align*}
\left(1-i^{a+1}\right)\int_{0}^{\infty}\frac{x^a\, dx}{\left(\cos x+\cosh x\right)^m}=2\pi i\sum_{n=1}^{\infty}\underset{z=z_n}\Res \{F(z)\}.
\end{align*}
This completes the proof of the theorem.
\end{proof}

Letting $m=1,2$ in Theorem \ref{main-thm-one-plus}, we can get the following corollary.
\begin{cor}
\emph{(cf. \cite{Berndt2016,XZ2023})} For $a>0$,
\begin{align*}
\frac{(1-i^{a+1})}{(1+i)^{a-1}}\int_0^\infty \frac{x^a dx}{\cos x+\cosh x}=&\,
    2\pi^{a+1} \sum_{n=1}^\infty (-1)^{n-1} \frac{\tn^a}{\cosh(\tn \pi)},\\
\frac{(1-i^{a+1})}{(1+i)^{a-1}}\int_0^\infty \frac{x^a dx}{(\cos x+\cosh x)^2}
=&\, 2\pi^{a+1}\sum_{n=1}^\infty \frac{\tn^a\sinh(\tn \pi)}{\cosh^3(\tn \pi)}
  -a\pi^a\sum_{n=1}^\infty \frac{\tn^{a-1}}{\cosh^2(\tn \pi)}.
\end{align*}
\end{cor}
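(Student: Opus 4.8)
The plan is to obtain the corollary by directly specializing the master identity of Theorem~\ref{main-thm-one-plus} to $m=1$ and $m=2$, then simplifying. The crucial structural observation in both cases is that $[(m-1)/2]=0$, so the outer summation over $l$ collapses to the single term $l=0$; this lets me invoke $\ga_{0,m}=1$ from the preceding Proposition and discard all higher coefficients. The remaining work is pure bookkeeping of prefactors, signs, binomial coefficients, and one elementary derivative.

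First I would treat $m=1$. Here the prefactor $2^{m-2}$ equals $2^{-1}$, and since $l=0$ forces $m-1-2l=0$, the inner sum over $j$ collapses to $j=0$ as well, leaving the zeroth-order (identity) differential operator. Substituting $\ga_{0,1}=1$, $\binom{a}{0}=1$, $(-1)^{mn}=(-1)^n$, the exponent $a+1+2l-m=a$, and $\cosh^m=\cosh$, the right-hand side reduces to $-\pi^{a+1}\sum_{n\geq1}(-1)^n\tn^a/\cosh(\tn\pi)$. Rewriting $-(-1)^n=(-1)^{n-1}$ and multiplying both sides of the identity by $2$ to clear the factor $2^{-1}$ yields exactly the first displayed formula.

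Next I would treat $m=2$. Now $2^{m-2}=1$, and with $l=0$ we have $m-1-2l=1$, so $j$ ranges over $\{0,1\}$; moreover $(-1)^{mn}=1$, $\ga_{0,2}=1$, and the exponent $a+1+2l-m=a-1$ in both terms. The $j=1$ term carries the identity operator and $\binom{a}{1}=a$, together with $\pi^{a+1-j}=\pi^a$ and the sign $(-1)^{l+1}=-1$, producing directly $-a\pi^a\sum_{n\geq1}\tn^{a-1}/\cosh^2(\tn\pi)$. The $j=0$ term carries the first derivative, and the only genuine computation is
\[
\frac{d}{dy}\left\{\frac{1}{\cosh^2(\tn y)}\right\}=-\frac{2\tn\sinh(\tn y)}{\cosh^3(\tn y)},
\]
which, evaluated at $y=\pi$ and combined with $\binom{a}{0}=1$, $\pi^{a+1}$, and the sign $(-1)^{l+1}=-1$, contributes $+2\pi^{a+1}\sum_{n\geq1}\tn^a\sinh(\tn\pi)/\cosh^3(\tn\pi)$. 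Adding the two contributions gives the second displayed formula.

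I expect no serious obstacle: the entire argument is a specialization of the already-established Theorem~\ref{main-thm-one-plus}. The single point requiring a word of justification is the termwise differentiation and the passage of the limit $y\to\pi$ inside the sum in the $m=2$ case. This is legitimate because, for $y$ in a neighbourhood of $\pi$, the summands $\tn^{a-1}/\cosh^2(\tn y)$ and their derivatives decay like $\tn^{a}e^{-2\tn y}$, so both the series and its differentiated series converge uniformly; hence $d/dy$ commutes with $\sum_{n\geq1}$ and the limit may be taken termwise.
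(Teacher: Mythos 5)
Your proposal is correct and is precisely the paper's own route: the paper derives this corollary by simply setting $m=1$ and $m=2$ in Theorem~\ref{main-thm-one-plus}, and your bookkeeping of the prefactors, signs, the collapse to $l=0$, the derivative $\frac{d}{dy}\{\cosh^{-2}(\tn y)\}=-2\tn\sinh(\tn y)/\cosh^{3}(\tn y)$, and the final identities all check out. Your added justification of termwise differentiation via uniform convergence is a small bonus the paper leaves implicit.
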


\begin{thm}\label{main-thm-one-minus}
For any positive integer $m$ and real $a\geq 2m$, we have
\begin{align*}
&\frac{2^{m-2}(1-(-1)^m i^{a+1})}{i^{m}(1+i)^{a-1}}\int_0^\infty \frac{x^a dx}{(\cos x-\cosh x)^m}\nonumber\\
=&\, \sum_{l=0}^{[(m-1)/2]}\sum_{j=0}^{m-1-2l}\frac{(-1)^{l+1}2^l \ga_{l,m}}{(m-1-2l-j)!}\binom{a}{j}\pi^{a+1-j}
\lim_{y\to \pi}\frac{d^{m-1-2l-j}}{dy^{m-1-2l-j}}\left\{\sum_{n=1}^\infty (-1)^{mn} \frac{n^{a+1+2l-m}}{\sinh^m (n y)}\right\},
\end{align*}
where $\ga_{l,m}$ is given by \eqref{defn-Coffe-C}.
\end{thm}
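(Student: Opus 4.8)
The plan is to mirror, line for line, the quarter-circle contour argument of Theorem~\ref{main-thm-one-plus}, changing only what the ``minus'' sign in the denominator forces to change. Put $F(z)=z^a/(\cos z-\cosh z)^m$ and integrate $F$ over the positively oriented contour $C_R$ made up of $[0,R]$, the first-quadrant quarter-circle $\Gamma_R$ of radius $R$, and $[iR,0]$. The one algebraic identity that drives everything is the factorization
\begin{align*}
\cos z-\cosh z=\cosh(iz)-\cosh z=2\sinh\Big(\tfrac{1+i}{2}z\Big)\sinh\Big(\tfrac{i-1}{2}z\Big),
\end{align*}
which replaces the two $\cosh$-factors of the plus case by two $\sinh$-factors. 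Its zeros give poles of order $m$ at $y_n=n\pi(1+i)$ and at $n\pi(1-i)$ for $n\in\Z$, of which only the $y_n$ with $n\ge 1$ lie inside $C_R$; the half-integers $\tn$ of the plus case are thereby replaced by integers $n$. This is exactly the situation for which \eqref{equ-power-m-hybolic-sinh-function} was prepared.

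First I would compute $\underset{z=y_n}{\Res}\{F(z)\}$ from the order-$m$ residue formula, inserting \eqref{equ-power-m-hybolic-sinh-function} for the singular factor $\sinh^{-m}(\tfrac{1+i}{2}z)$ and keeping $\sinh^{-m}(\tfrac{i-1}{2}z)$ as the regular part. As before only the terms with $2l\le m-1$ survive $m-1$ differentiations, the three binomial--factorial factors telescope to $1/(m-1-2l-j)!$, and a Leibniz expansion splits off the derivatives of $z^a$. The genuinely new point is the regular factor: since $\tfrac{i-1}{2}y_n=-n\pi$, the substitution $z=n(1+i)y$ (so that $\tfrac{i-1}{2}z=-ny$ and $dz=n(1+i)\,dy$) gives
\begin{align*}
\lim_{z\to y_n}\frac{d^{p}}{dz^{p}}\Big\{\frac{1}{\sinh^m(\tfrac{i-1}{2}z)}\Big\}=(-1)^m\Big(\frac{1}{n(1+i)}\Big)^{p}\lim_{y\to\pi}\frac{d^{p}}{dy^{p}}\Big\{\frac{1}{\sinh^m(ny)}\Big\},
\end{align*}
the sign $(-1)^m$ coming from $\sinh(-ny)=-\sinh(ny)$. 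This is the $\sinh$-analogue of the substitution $z=\tfrac{2\tn}{1-i}y$ used in the plus case, and it is what forces $\sinh^m(ny)$ rather than $\cosh^m(\tn y)$ into the final series. Collecting $y_n^{a-j}=n^{a-j}\pi^{a-j}(1+i)^{a-j}$ with the factor $(n(1+i))^{-(m-1-2l-j)}$ assembles the $n$-powers into $n^{a+1+2l-m}$ inside the series and the $\pi$-powers into $\pi^{a-j}$.

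The two remaining contour pieces behave as in the plus case. On $\Gamma_R$ the denominator grows exponentially away from the ray $\arg z=\pi/4$ (and, for $R$ chosen between consecutive poles, stays bounded below on it), so the polynomial $z^a$ cannot prevent $\int_{\Gamma_R}\to 0$; the hypothesis $a\ge 2m$ is what makes the original integral converge at $x=0$, where $\cos x-\cosh x\sim -x^2$. On $[iR,0]$ the substitution $z=iy$ together with $\cos(iy)-\cosh(iy)=-(\cos y-\cosh y)$ produces the prefactor $1-(-1)^m i^{a+1}$, the extra $(-1)^m$ being precisely the discrepancy from the plus case. The residue theorem then yields
\begin{align*}
\big(1-(-1)^m i^{a+1}\big)\int_0^\infty\frac{x^a\,dx}{(\cos x-\cosh x)^m}=2\pi i\sum_{n=1}^\infty\underset{z=y_n}{\Res}\{F(z)\}.
\end{align*}

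I expect the main obstacle to be purely bookkeeping: verifying that the accumulated complex constants collapse to the real coefficient in the statement. Combining the residue theorem with the normalization $2^{m-2}(1-(-1)^m i^{a+1})/\big(i^m(1+i)^{a-1}\big)$ on the left-hand side (the factor $1-(-1)^m i^{a+1}$ cancelling), the constant attached to each $(l,j)$-term is
\begin{align*}
\frac{2^{m-1}\pi i}{i^m(1+i)^{a-1}}\cdot\frac{(-1)^m(1-i)^m}{2^m}\,i^l(1+i)^{a-m+1+2l}.
\end{align*}
Using $(1-i)(1+i)=2$, $\tfrac{1-i}{1+i}=-i$, $(1+i)^2=2i$, and $(-1)^m(-i)^m=i^m$, this reduces to $(-1)^{l+1}2^l\pi$, whose $\pi$ merges with $\pi^{a-j}$ to give $\pi^{a+1-j}$; together with $\ga_{l,m}$, $\binom{a}{j}$ and $1/(m-1-2l-j)!$ this is exactly the summand claimed. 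Since every other step duplicates the proof of Theorem~\ref{main-thm-one-plus} verbatim, I would write the argument by flagging only these three differences---the $\sinh$ factorization, the $(-1)^m$ from the regular factor, and the $1-(-1)^m i^{a+1}$ prefactor---and referring to Theorem~\ref{main-thm-one-plus} for the identical portions.
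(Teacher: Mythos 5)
Your proposal is correct and follows essentially the same route as the paper's own proof: the same quarter-circle contour, the same factorization $\cos z-\cosh z=2\sinh\bigl(\tfrac{1+i}{2}z\bigr)\sinh\bigl(\tfrac{i-1}{2}z\bigr)$, the same use of \eqref{equ-power-m-hybolic-sinh-function} at the poles $y_n=n\pi(1+i)$, the same substitution $z=n(1+i)y$ for the regular factor (your formula with $\sinh^m(ny)$ is the correct form; the paper's intermediate display misprints it as $\sinh^m(\tn y)$ before stating it correctly in \eqref{residue-case-g-1}), and the same residue-theorem assembly, with your constant simplification to $(-1)^{l+1}2^l\pi$ agreeing with the paper's.
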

\begin{proof}
Consider
\begin{align*}
\lim_{R\rightarrow\infty }\int_{C_R} \frac{z^{a}dz}{(\cos z-\cosh z)^m}=\lim_{R\rightarrow\infty }\int_{C_R} G(z)dz,
\end{align*}
where $C_R$ denotes the positively oriented quarter-circular contour consisting of the interval $[0,R]$; the quarter-circle $\Gamma_R$ with $|z|=R$ and $0\leq \arg\leq \pi/2$; and the segment $[i R,0]$ on the imaginary axis. Clearly, there exist poles of order $2$ in the present case when
\begin{align*}
\cos z-\cosh z=2\sinh\left\{\frac1{2}(z+iz)\right\}\sinh\left\{\frac1{2}(iz-z)\right\}=0.
\end{align*}
Hence, there exist one set of poles at $t_n=n\pi(1-i)$ and $y_n=n\pi(1+i)$, where $n\in\N$. Those lying on the interior of $C_R$ are $y_n=n\pi (1+i),n\geq 1,|y_n|<R$. Hence, by \eqref{equ-power-m-hybolic-sinh-function}, we have
\begin{align*}
&G(z)\mathop  = \limits^{z \to y_n} \frac{(1-i)^m(-1)^{mn}}{2^m}\frac{z^a}{{\sinh^m \left(\frac{i-1}{2}z \right)}}
\sum_{l=0}^\infty i^l \ga_{l,m}  (z-y_n)^{2l-m}.
\end{align*}
Further, the residue is
\begin{align*}
\underset{z=y_n}\Res  \{G(z)\}&=\frac1{(m-1)!}\lim_{z\rightarrow y_n}\frac{d^{m-1}}{dz^{m-1}} \left\{(z-y_n)^m \frac{z^a}{(\cos z-\cosh z)^m}\right\}\nonumber\\
&=\frac{(-1)^{mn}(1-i)^m}{2^m} \sum_{l=0}^{[(m-1)/2]}\sum_{j=0}^{m-1-2l}  \frac{i^l \ga_{l,m}}{(m-1-2l-j)!}\binom{a}{j}y_n^{a-j}\nonumber \\&\quad\quad\quad\quad\quad\quad\quad\quad\quad\times \lim_{z\rightarrow y_n}\frac{d^{m-1-2l-j}}{dz^{m-1-2l-j}}  \left\{\frac{1}{\sinh^m\left(\frac{i-1}{2}z\right)} \right\}.
\end{align*}
Also,
\begin{align*}
\lim_{z\rightarrow y_n}\frac{d^{p}}{dz^{p}}  \left\{\frac{1}{\sinh^m\left(\frac{i-1}{2}z\right)} \right\}=(-1)^m\left(\frac{1}{(1+i) n}\right)^p\lim_{y\rightarrow \pi}\frac{d^{p}}{dy^{p}}\left\{\frac{1}{\sinh^m(\tn y)} \right\},
\end{align*}
where we have used $z=\frac{2 n}{1-i}y$. Hence, one obtains
\begin{align}\label{residue-case-g-1}
\underset{z=y_n}\Res  \{G(z)\}&=\frac{i^m(1+i)^{a+1}(-1)^{mn}}{2^{m}}\sum_{l=0}^{[(m-1)/2]}\sum_{j=0}^{m-1-2l}\frac{(-1)^{l}2^l \ga_{l,m}}{(m-1-2l-j)!}\binom{a}{j}\pi^{a-j}\nonumber\\&\quad\quad\quad\quad\quad\quad\quad\quad\quad\quad\quad\quad\times\lim_{y\rightarrow \pi}\frac{d^{m-1-2l-j}}{dy^{m-1-2l-j}}\left\{\frac{n^{a+1+2l-m}}{\sinh^m(n y)} \right\}.
\end{align}
The integral over $\Gamma_R$ tends to $0$ as $R\rightarrow\infty$. Hence, by \eqref{residue-case-g-1} and the residue theorem
\begin{align*}
(1-(-1)^mi^{a+1})\int_{0}^\infty \frac{x^adx}{(\cos x-\cosh x)^m}=2\pi i \sum_{n=1}^\infty \underset{z=y_n}\Res  \{G(z)\}.
\end{align*}
Thus, we have finished the proof of the theorem.
\end{proof}

Letting $m=1,2$ in Theorem \ref{main-thm-one-minus}, we can get the following corollary.
\begin{cor}
\emph{(cf. \cite{Berndt2016,XZ2023})} For $a\geq 2$,
\begin{align*}
\frac{1+i^{a+1}}{i(1+i)^{a-1}}\int_0^\infty \frac{x^a dx}{\cos x-\cosh x}
=2\pi^{a+1} \sum_{n=1}^\infty (-1)^{n+1} \frac{n^a}{\sinh(n \pi)},
\end{align*}
and for $a\geq 4$,
\begin{align*}
\frac{1-i^{a+1}}{(1+i)^{a-1}}\int_0^\infty \frac{x^a dx}{(\cos x-\cosh x)^2}&=a\pi^a\sum_{n=1}^\infty \frac{n^{a-1}}{\sinh^2(n\pi)}-2\pi^{a+1}\sum_{n=1}^\infty \frac{n^a\cosh(n \pi)}{\sinh^3(n \pi)}.
\end{align*}
\end{cor}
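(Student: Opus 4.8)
The plan is to obtain both identities by directly specializing Theorem~\ref{main-thm-one-minus} to $m=1$ and $m=2$, since the corollary is nothing more than these two instances written out explicitly. The only structural inputs needed are the value $\ga_{0,m}=1$ recorded in the preceding Proposition and the elementary observations that $(-1)^{mn}=(-1)^n$ when $m=1$ and $(-1)^{mn}=1$ when $m=2$. In both cases the floor bound $[(m-1)/2]=0$ forces $l=0$, so the outer sum collapses immediately.

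For $m=1$ the inner range $0\le j\le m-1-2l=0$ leaves only the single term $l=j=0$. First I would substitute $m=1$ and $\ga_{0,1}=1$ into the right-hand side, observe that the derivative order $m-1-2l-j$ is then zero so no differentiation survives, and read off the prefactor $\frac{(-1)^{l+1}2^l\ga_{l,m}}{(m-1-2l-j)!}=-1$, giving $-\pi^{a+1}\sum_{n\ge1}(-1)^n n^a/\sinh(n\pi)=\pi^{a+1}\sum_{n\ge1}(-1)^{n+1}n^a/\sinh(n\pi)$. On the left I would simplify the prefactor $2^{m-2}\bigl(1-(-1)^m i^{a+1}\bigr)/\bigl(i^m(1+i)^{a-1}\bigr)$ at $m=1$ to $(1+i^{a+1})/\bigl(2i(1+i)^{a-1}\bigr)$, and clearing the factor $2$ produces the stated identity; the hypothesis $a\ge2$ is exactly the theorem's constraint $a\ge2m$.

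For $m=2$ the bound again leaves $l=0$, but now $0\le j\le1$, producing two terms with $n^{a+1+2l-m}=n^{a-1}$. The term $j=1$ carries derivative order $0$ and, using $\binom{a}{1}=a$, contributes $-a\pi^a\sum_{n\ge1}n^{a-1}/\sinh^2(n\pi)$. The term $j=0$ carries derivative order $1$, so I would compute $\frac{d}{dy}\sinh^{-2}(ny)=-2n\cosh(ny)\sinh^{-3}(ny)$ and evaluate at $y=\pi$, yielding $2\pi^{a+1}\sum_{n\ge1}n^a\cosh(n\pi)/\sinh^3(n\pi)$. Simplifying the left-hand prefactor at $m=2$ (where $i^m=-1$ and $(-1)^m=1$) to $-(1-i^{a+1})/(1+i)^{a-1}$ and multiplying through by $-1$ then gives the second identity, valid for $a\ge2m=4$.

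Since every step is a finite specialization of the already-proved Theorem~\ref{main-thm-one-minus} together with a one-line derivative computation, there is no genuine analytic obstacle here. The only point that demands care is the bookkeeping of the powers of $i$, the sign $(-1)^{l+1}$, and the factorial $(m-1-2l-j)!$ appearing in the prefactors on both sides; keeping these aligned is precisely what turns the general theorem into the compact corollary statements.
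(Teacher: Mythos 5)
Your proposal is correct and is exactly the paper's own route: the paper derives this corollary by simply setting $m=1$ and $m=2$ in Theorem~\ref{main-thm-one-minus}, which is what you carry out, and your bookkeeping of the prefactors (including $\ga_{0,m}=1$, the sign $(-1)^{l+1}$, and the single $y$-derivative in the $m=2$ case) checks out.
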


\section{Explicit evaluations of Hyperbolic Sums}

In this section, we will establish explicit evaluations of several hyperbolic sums associated with Jacobi elliptic functions.

Recall that the Jacobi elliptic function $\sn (u)=\sn (u,k)$ is defined via the inversion of the elliptic integral
\begin{align*}
u=\int_0^\varphi \frac{dt}{\sqrt{1-k^2\sin^2 t}}\quad (0<k^2<1),
\end{align*}
namely, $\sn (u):=\sin \varphi$. As before, we refer $k={\rm mod}\, u$ as the elliptic modulus. We also write $\varphi={\rm am}(u,k)={\rm am}(u)$ and call it the Jacobi amplitude. Then the Jacobi elliptic functions $\cn u$ and $\dn u$ may be defined by
\begin{align*}
&\cn (u):=\sqrt{1-\sn^2 (u)}\quad\text{and} \quad  \dn (u):=\sqrt{1-k^2\sn^2(u)}.
\end{align*}
Further, we give the definitions of Jacobi elliptic functions $\nc (u)$, $\dc (u), \sd(u)$ and $\ds (u)$, as follows:
\begin{align*}
\nc (u):=\frac{1}{\cn (u)}, \quad \dc (u):=\frac{\dn (u)}{\cn (u)}, \quad \ds (u):=\frac{\dn (u)}{\sn (u)},\quad\sd(u):=\frac{\sn (u)}{\dn (u)}.
\end{align*}

We adopt the following notations:
\begin{defn}\label{de1}(\cite[Defn. 1.1]{XuZhao-2022}) Let $m\in\N$ and $p\in\Z$. Define
\begin{align*}
&S_{p,m}(y):=\sum\limits_{n=1}^\infty \frac {n^p}{\sinh^m(ny)},\quad {\bar S}_{p,m}(y):=\sum\limits_{n=1}^\infty \frac {n^p}{\sinh^m(ny)}(-1)^{n-1},\\
&C_{p,m}(y):=\sum\limits_{n=1}^\infty \frac {n^p}{\cosh^m(ny)},\quad {\bar C}_{p,m}(y):=\sum\limits_{n=1}^\infty \frac {n^p}{\cosh^m(ny)}(-1)^{n-1},\\
&S'_{p,m}(y):=\sum\limits_{n=1}^\infty \frac {\tn^p}{\sinh^m(\tn y)},\quad \widetilde{S}_{p,m}(y):=\sum\limits_{n=1}^\infty \frac {\tn^p}{\sinh^m(\tn y)}(-1)^{n-1},\\
&C'_{p,m}(y):=\sum\limits_{n=1}^\infty \frac {\tn^p}{\cosh^m(\tn y)},\quad \widetilde{C}_{p,m}(y):=\sum\limits_{n=1}^\infty \frac {\tn^p}{\cosh^m(\tn y)}(-1)^{n-1}.
\end{align*}
\end{defn}

Some recent results on infinite series involving hyperbolic functions may also be found in the works of \cite{AB2009,AB2013,B1977,B1978,BB2002,Campbell,KMT2014,T2015,T2008,T2010,T2012,X2018,XuZhao-2022,XZ2023,Ya-2018}. For example, the closed form of $C'_{2,2}(\pi)$ can be found in Berndt-Bialek-Yee's paper \cite[Corollary 3.9]{BB2002} and Andrews-Berndt's book \cite[Page 233]{AB2009}.

We need the following lemmas.
\begin{lem} For any integer $k\geq 0$ and $x\in \mathbb{C}$,
\begin{align}
&\frac{d^{2k}}{dx^{2k}} \left(\frac{1}{\sinh(x)}\right)=\sum_{l=0}^k \frac{B_{k,l}}{\sinh^{2l+1}(x)},\label{equ-sinh-single-one}\\
&\frac{d^{2k}}{dx^{2k}} \left(\frac{1}{\cosh(x)}\right)=\sum_{l=0}^k (-1)^l \frac{B_{k,l}}{\cosh^{2l+1}(x)},\label{equ-cosh-single-one}
\end{align}
where the coefficients $B_{k,l} \ (B_{k,0}:=1,B_{k,k}=(2k)!)$ have the following recurrence relation:
\begin{align*}
B_{k,l}=(2l-1)(2l)B_{k-1,l-1}+(2l+1)^2B_{k-1,l}
\end{align*}
for $1\leq l\leq k$.
\end{lem}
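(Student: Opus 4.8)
The plan is to prove both identities simultaneously by induction on $k$, the engine being a formula for the second derivative of a single odd power of $1/\sinh$ or $1/\cosh$. For $k=0$ both identities reduce to the tautology $1/\sinh(x)=1/\sinh(x)$ (resp. for $\cosh$), consistent with $B_{0,0}=1$, so the base case is immediate. For the inductive step I would apply $d^2/dx^2$ to the expansion already known for $d^{2k-2}/dx^{2k-2}$ and regroup.

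First I would record the key single-term computation. Writing $s=\sinh(x)$, $c=\cosh(x)$ and differentiating twice, a direct calculation using $c^2=1+s^2$ gives
\begin{align*}
\frac{d^2}{dx^2}\frac{1}{\sinh^{2l+1}(x)}=(2l+1)(2l+2)\frac{1}{\sinh^{2l+3}(x)}+(2l+1)^2\frac{1}{\sinh^{2l+1}(x)},
\end{align*}
while the analogous computation with $s^2=c^2-1$ yields
\begin{align*}
\frac{d^2}{dx^2}\frac{1}{\cosh^{2l+1}(x)}=(2l+1)^2\frac{1}{\cosh^{2l+1}(x)}-(2l+1)(2l+2)\frac{1}{\cosh^{2l+3}(x)}.
\end{align*}
The crucial point is that the Pythagorean-type identity converts the stray $\cosh^2$ (resp. $\sinh^2$) factor produced by the chain rule into exactly two odd powers, which is what keeps the derivative within the stated form.

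Applying the first formula term by term to $\sum_{l=0}^{k-1} B_{k-1,l}/\sinh^{2l+1}(x)$ and collecting the coefficient of $1/\sinh^{2L+1}(x)$, the raised power $2L+3$ forces the index shift $l\mapsto l+1$, so the incoming contributions are $(2L-1)(2L)B_{k-1,L-1}$ from the raised power and $(2L+1)^2 B_{k-1,L}$ from the unchanged power; their sum is precisely the asserted recurrence for $B_{k,L}$. The hyperbolic-secant case is identical once one checks that the extra minus sign on the raised-power term combines with the factor $(-1)^{L-1}$ to produce $(-1)^{L}$, so the same $B_{k,l}$ reappear attached to the alternating sign $(-1)^l$. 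The boundary values then follow from the recurrence: at $L=0$ only the unchanged-power term survives, giving $B_{k,0}=(2\cdot 0+1)^2 B_{k-1,0}=B_{k-1,0}=1$; at $L=k$ only the raised-power term survives, giving $B_{k,k}=(2k-1)(2k)B_{k-1,k-1}$, which telescopes to $\prod_{j=1}^{k}(2j-1)(2j)=(2k)!$.

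The only real obstacle is bookkeeping: one must track the index shift $l\mapsto l+1$ carefully when re-collecting the raised powers, and in the $\cosh$ case verify the sign propagation so that the alternating pattern $(-1)^l$ is preserved. Everything else is the routine verification that the two-term second-derivative formula dovetails with the recurrence, and that the top and bottom coefficients match the prescribed $B_{k,0}=1$ and $B_{k,k}=(2k)!$.
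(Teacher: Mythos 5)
Your proof is correct. Note, however, that the paper does not actually prove this lemma: its entire ``proof'' is a one-line citation to Eqs.~(2.7)--(2.8) of an earlier paper of Xu \cite{X2020CKMS}, so your argument is genuinely more informative than what appears in the text. Your engine is the two-step differentiation formula
\begin{align*}
\frac{d^2}{dx^2}\frac{1}{\sinh^{2l+1}(x)}=(2l+1)(2l+2)\frac{1}{\sinh^{2l+3}(x)}+(2l+1)^2\frac{1}{\sinh^{2l+1}(x)},
\end{align*}
together with its $\cosh$ analogue carrying a minus sign on the raised power; both computations check out (the Pythagorean identity $\cosh^2=1+\sinh^2$, resp.\ $\sinh^2=\cosh^2-1$, is exactly what splits the stray square into two odd powers). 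The induction then closes precisely as you describe: the shift $l\mapsto l+1$ on the raised power contributes $(2l-1)(2l)B_{k-1,l-1}$, the unchanged power contributes $(2l+1)^2B_{k-1,l}$, their sum is the stated recurrence, and in the $\cosh$ case the extra minus sign combines with $(-1)^{l-1}$ to preserve the alternating pattern $(-1)^l$. Your boundary analysis is also right --- at $l=0$ only the unchanged-power term survives, giving $B_{k,0}=1$, and at $l=k$ only the raised-power term survives, telescoping to $(2k)!$ --- though when you phrase this via the recurrence you are implicitly adopting the convention $B_{k-1,l}=0$ for $l<0$ or $l>k-1$, which is worth stating explicitly since the lemma asserts the recurrence only for $1\le l\le k$. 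What your route buys is self-containedness: a reader of the paper must chase an external reference to see why the lemma holds, whereas your second-derivative computation plus induction settles it completely in a few lines.
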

\begin{proof}
The Lemma follows immediately from the \cite[Eqs. (2.7)-(2.8)]{X2020CKMS}.
\end{proof}

\begin{lem}\emph{(cf. \cite{L1974})}
For any integer $k\geq 0$ and $x\in \mathbb{C}$,
\begin{align}
&\frac1{(2k+1)!}\frac{d^{2k}}{dx^{2k}}\left(\frac{1}{\sinh^2(x)}\right)=\sum\limits_{l=0}^{k} \frac{A_{2k+2,2l+2}}{\sinh^{2l+2}(x)},\label{equ-diff-sinh-fun-two}\\
&\frac1{(2k+1)!}\frac{d^{2k}}{dx^{2k}}\left(\frac{1}{\cosh^2(x)}\right)=\sum\limits_{l=0}^{k} (-1)^{l} \frac{A_{2k+2,2l+2}}{\cosh^{2l+2}(x)},\label{equ-diff-cosh-fun-two}
\end{align}
where the coefficients $A_{2k,2l} \ (A_{2k,2k}:=1)$ have the following recurrence relation:
\begin{align*}
A_{2k+2,2l}=\frac{1}{2k(2k+2)}\left((2l-1)(2l-2)A_{2k,2l-2}+4l^2A_{2k,2l}\right)
\end{align*}
for $1\leq l\leq k$. See Table 5 of \cite{L1974} for some values of $A_{2k,2l}$.
\end{lem}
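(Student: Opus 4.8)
The plan is to establish \eqref{equ-diff-sinh-fun-two} and \eqref{equ-diff-cosh-fun-two} together by induction on $k$, letting the recurrence for $A_{2k,2l}$ fall out of comparing coefficients at each stage. The engine of the whole argument is a pair of second-order reduction formulas. Differentiating $1/\sinh^n x$ and $1/\cosh^n x$ twice and then eliminating $\cosh^2 x$ (resp.\ $\sinh^2 x$) by $\cosh^2 x-\sinh^2 x=1$ gives, for every integer $n\geq 1$,
\begin{align*}
\frac{d^2}{dx^2}\left(\frac{1}{\sinh^n x}\right)&=\frac{n^2}{\sinh^n x}+\frac{n(n+1)}{\sinh^{n+2} x},\\
\frac{d^2}{dx^2}\left(\frac{1}{\cosh^n x}\right)&=\frac{n^2}{\cosh^n x}-\frac{n(n+1)}{\cosh^{n+2} x}.
\end{align*}
The two formulas differ only in the sign of the power-raising term, and it is precisely this sign that will later produce the factor $(-1)^l$ in \eqref{equ-diff-cosh-fun-two}.

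For the base case $k=0$ both identities reduce to $1/\sinh^2 x=A_{2,2}/\sinh^2 x$ and $1/\cosh^2 x=A_{2,2}/\cosh^2 x$, which hold by the normalization $A_{2,2}=1$. For the inductive step I assume \eqref{equ-diff-sinh-fun-two} at level $k$, apply $d^2/dx^2$ term by term, and substitute the first reduction formula with $n=2l+2$: the summand indexed by $l$ then deposits $(2l+2)^2 A_{2k+2,2l+2}$ into the $1/\sinh^{2l+2}x$ slot and $(2l+2)(2l+3)A_{2k+2,2l+2}$ into the $1/\sinh^{2l+4}x$ slot. Regrouping by powers of $\sinh$, so that the raised term from index $l$ and the un-raised term from index $l+1$ merge, the coefficient of $1/\sinh^{2j+2}x$ becomes $(2j+2)^2A_{2k+2,2j+2}+(2j)(2j+1)A_{2k+2,2j}$. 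Dividing through by the factorial normalization $(2k+3)!/(2k+1)!=(2k+2)(2k+3)$ identifies this with $A_{2k+4,2j+2}$, which is the claimed three-term recurrence. The boundary values are automatic: the top slot ($j=k+1$) receives only the raised contribution from $l=k$ and reproduces $A_{2k+4,2k+4}=A_{2k+2,2k+2}=1$, while the bottom slot ($j=0$) receives only the un-raised contribution from $l=0$. The cosine identity is handled identically with the second reduction formula, and one checks that its extra minus sign, acting on the raised term, carries the sign pattern $(-1)^l$ of the level-$k$ expansion over to the correct $(-1)^j$ at level $k+1$, leaving the same recurrence for the $A$'s.

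The two reduction formulas and the tally of which term lands in which slot are routine. The one delicate point, and the step I would check most carefully, is the normalization: one must carry the factorial ratio $(2k+1)!/(2k+3)!$ through the differentiation so that the raw coefficients collapse to exactly the recurrence for $A_{2k,2l}$, and simultaneously confirm that the convention $A_{2k,2k}=1$ is reproduced by, rather than imposed on, that recurrence. In the indexing $A_{2k+2,2l}\leftarrow A_{2k,\cdot}$ of the stated recurrence this normalizing constant is the reindexed factorial ratio $1/\big((2k)(2k+1)\big)$, so it is exactly here — in pinning down the normalizing factor and aligning it with the index shift $l\mapsto l+1$ of the power-raising term — that an off-by-one slip would most easily occur.
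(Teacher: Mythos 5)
Your induction is a genuinely self-contained argument: the paper offers no proof of this lemma at all (it simply quotes Ling's 1974 paper and his Table 5), so there is nothing to compare step by step. Your two second-order reduction formulas are correct, the base case is fine, and the sign bookkeeping that produces the factor $(-1)^l$ in the $\cosh$ identity is handled properly.

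There is, however, one genuine problem, and it is precisely at the spot you flagged but did not resolve. Your inductive step yields
\begin{equation*}
A_{2k+4,2j+2}=\frac{(2j+2)^2A_{2k+2,2j+2}+(2j)(2j+1)A_{2k+2,2j}}{(2k+2)(2k+3)},
\end{equation*}
which in the lemma's own indexing reads
\begin{equation*}
A_{2k+2,2l}=\frac{1}{2k(2k+1)}\left((2l-1)(2l-2)A_{2k,2l-2}+4l^2A_{2k,2l}\right),
\end{equation*}
with exactly the constant $1/\bigl(2k(2k+1)\bigr)$ that you state in your closing paragraph. But the lemma prints the denominator $2k(2k+2)$, so your claim that the inductive step ``identifies this with \ldots the claimed three-term recurrence'' is false as written, and your closing paragraph silently contradicts your inductive step. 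The resolution is that the recurrence as printed in the lemma is erroneous: it is inconsistent with the expansion formulas and the normalization $A_{2k,2k}=1$ imposed in the same statement. Indeed, $\frac{1}{3!}\frac{d^2}{dx^2}\bigl(\sinh^{-2}x\bigr)=\frac{2/3}{\sinh^{2}x}+\frac{1}{\sinh^{4}x}$, so $A_{4,2}=2/3$, which your denominator $2k(2k+1)=6$ reproduces, while the printed $2k(2k+2)=8$ would give $A_{4,2}=1/2$; likewise only the factor $(2k+2)(2k+3)$ makes the top coefficient close up to $A_{2k+2,2k+2}=1$, as your own boundary check shows. So your argument is correct and in fact serves as a correction to the statement, but a complete write-up must say explicitly that the lemma's denominator should read $2k(2k+1)$; asserting agreement with the printed recurrence, as you do, is the one real gap.
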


\begin{lem}\label{lem-sigma-polyn-criterion}
Let $\gs=x(1-x)$. If $f(x)\in\Q[x]$ then
\begin{align*}	
f(x)\in \Q[\gs] \Longleftrightarrow &\,f(x)-f(1-x)=0, \\
f(x)\in \Q[\gs]\gs' \Longleftrightarrow &\, f(x)+f(1-x)=0.
\end{align*}
\end{lem}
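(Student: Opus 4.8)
The plan is to exploit the involution $\tau\colon x\mapsto 1-x$ of the ring $\Q[x]$, under which $\gs=x(1-x)$ is invariant while $\gs'=1-2x$ is negated, and to decompose $\Q[x]$ into the two eigenspaces of $\tau$. The two forward implications are immediate from these symmetries: if $f\in\Q[\gs]$ then $f(1-x)=f(x)$, so $f(x)-f(1-x)=0$; and if $f=h(\gs)\gs'$ with $h\in\Q[\gs]$, then $f(1-x)=h(\gs)\bigl(-\gs'\bigr)=-f(x)$, so $f(x)+f(1-x)=0$.

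The substance lies in the converses, for which I would first establish the direct-sum decomposition
\[
\Q[x]=\Q[\gs]\oplus\Q[\gs]\gs'.
\]
Spanning follows from the quadratic relation $x^2=x-\gs$: rewriting $x^n=x^{\,n-1}-\gs\,x^{\,n-2}$ and inducting on the degree shows every power of $x$ lies in $\Q[\gs]+\Q[\gs]x$, and since $x=(1-\gs')/2$ and $1\in\Q[\gs]$, this set coincides with $\Q[\gs]+\Q[\gs]\gs'$. For the directness I would use a parity argument on the degree in $x$: as $\gs$ has $x$-degree $2$, every nonzero element of $\Q[\gs]$ has even $x$-degree, while every nonzero element of $\Q[\gs]\gs'$ has odd $x$-degree (even degree times the degree-one factor $\gs'$). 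Hence any relation $g(\gs)+h(\gs)\gs'=0$ forces each summand to vanish separately, and since $\Q[x]$ is an integral domain and $\gs'\neq 0$ this yields $g=h=0$.

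With this decomposition in hand the converses follow formally. Writing an arbitrary $f\in\Q[x]$ uniquely as $f=g(\gs)+h(\gs)\gs'$ and applying $\tau$, which fixes $\gs$ and negates $\gs'$, gives $f(1-x)=g(\gs)-h(\gs)\gs'$, whence $f(x)-f(1-x)=2h(\gs)\gs'$ and $f(x)+f(1-x)=2g(\gs)$. The former vanishes precisely when $h=0$, i.e.\ $f\in\Q[\gs]$, and the latter precisely when $g=0$, i.e.\ $f\in\Q[\gs]\gs'$, completing both equivalences.

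The only genuinely delicate point is the directness of the sum $\Q[\gs]\oplus\Q[\gs]\gs'$; everything else is formal once the involution is in place. I expect the parity-of-degree argument to be the crux, since it is exactly what guarantees that the representation $f=g(\gs)+h(\gs)\gs'$ is unique and hence that the two eigenspace projections are well defined.
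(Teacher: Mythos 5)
Your proof is correct, but it takes a genuinely different route from the paper's. The paper proves each converse separately by induction on degree: assuming $f(x)=f(1-x)$, a comparison of leading terms shows the degree $D=2d$ is even, so subtracting $c(-\gs)^{d}$ kills the leading term while preserving the symmetry, and induction finishes; the antisymmetric case is handled identically with the correction term $c(-\gs)^{d}\gs'/2$. You instead first establish the structural decomposition $\Q[x]=\Q[\gs]\oplus\Q[\gs]\gs'$ --- spanning via the quadratic relation $x^2=x-\gs$ together with $x=(1-\gs')/2$, directness via the parity of the $x$-degree --- and then read off both equivalences at once from the eigenspace projections $f\mapsto \tfrac12\bigl(f(x)\pm f(1-x)\bigr)$ of the involution $x\mapsto 1-x$. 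The crux is the same parity observation in both arguments, but deployed differently: the paper uses it to drive the induction step, you use it to prove uniqueness of the representation $f=g(\gs)+h(\gs)\gs'$. What your approach buys is a stronger, reusable fact --- $\Q[x]$ is a free module of rank two over the invariant ring $\Q[\gs]$, with the involution acting as $+1$ and $-1$ on the two summands --- from which both directions of both equivalences are formal; the paper's leading-term induction is leaner and targets exactly the two statements needed, at the cost of repeating the argument twice.
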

\begin{proof}
If $f(x)\in \Q[\gs]$ then clearly $f(x)=f(1-x)$.
Similarly, if $g(x)\in \Q[\gs]\gs'$ then $f(x)=-f(1-x)$ since $\gs'(1-x)=2x-1=-\gs'(x)$.

To prove the converse statements, first suppose $f(x)=f(1-x)$. Let the highest degree term of $f(x)$ be $c x^D$.
Then the highest degree term of $f(1-x)$ is $c(-1)^D x^D$. Thus $D$ must be even, say $D=2d$.
Then we see that
$\tilde{f}(x):=f(x)-c(-\gs)^{d}$ has smaller degree and still satisfies
$\tilde{f}(x)=\tilde{f}(1-x)$. By induction we see easily that $f(x)\in\Q[\gs]$.

Now we assume $f(x)+f(1-x)=0$. Let $h(x)=f(x)+f(1-x)$.
If the leading term of $f(x)$ is $cx^D$ then the leading term of $f(1-x)$ is $c(-1)^D x^D$.
Thus $D=2d+1$ must be odd. It follows that
$\tilde{f}(x):=f(x)+c(-\gs)^{d}\gs'/2$ has smaller degree and still satisfies
$\tilde{f}(x)+\tilde{f}(1-x)=0$ since $\gs'(1-x)=2x-1=-\gs'(x)$.
By induction we must have $f(x)\in\Q[\gs]\gs'$.

We have now completed the proof of the lemma.
\end{proof}

\begin{lem}\label{lem-one-exform-sinh-Qxz}
For any integer $\ss\geq 1$,
\begin{align*}	
{\bar S}_{4\ss+1,1}(y)\in z^{4\ss+2}\Q[\gs]\gs',\quad \text{and} \quad
{\bar S}_{4\ss-1,1}(y)\in z^{4\ss}\Q[\gs].
\end{align*}
\end{lem}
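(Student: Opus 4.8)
The plan is to recognize both families as single Taylor coefficients of Jacobi's Zeta function and then to read off the parity in $\gs=x(1-x)$ from the complementary-modulus symmetry via Lemma~\ref{lem-sigma-polyn-criterion}. First I would set up a generating function. Since $p=4\ss\pm1$ is odd, one has $\bar{S}_{p,1}(y)=\frac{d^p}{dt^p}\Phi(t)\big|_{t=0}$ for $\Phi(t)=\sum_{n\ge1}(-1)^{n-1}\sinh(nt)/\sinh(ny)$. Writing $q=e^{-y}$ so that $q^n/(1-q^{2n})=1/(2\sinh(ny))$, and invoking the classical Fourier expansion of Jacobi's Zeta function $Z$ (which satisfies $Z'(u)=\dn^2(u)-E/K$),
\[
Z(u)=\frac{2\pi}{K}\sum_{n\ge1}\frac{q^n}{1-q^{2n}}\sin\frac{n\pi u}{K},
\]
the substitution $u=K+iKt/\pi$ collapses $\Phi(t)$ to $\frac{iK}{\pi}\,Z(K+iKt/\pi)$. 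Differentiating $p$ times at $t=0$ and using $K=\pi z/2$ then gives the closed form
\[
\bar{S}_{p,1}(y)=\Big(\frac{iz}{2}\Big)^{p+1}Z^{(p)}(K)=\frac{i^{p+1}}{2^{p+1}}\,z^{p+1}D_p(x),\qquad D_p(x):=\frac{d^{p-1}}{du^{p-1}}\dn^2(u)\Big|_{u=K},
\]
where the constant $E/K$ in $Z'$ disappears after one differentiation because $p\ge3$ in both families, and the prefactor $i^{p+1}/2^{p+1}$ is rational for odd $p$.

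Next I would show $D_p(x)\in\Q[x]$, which already fixes the power of $z$ as $z^{p+1}$, i.e. $z^{4\ss}$ or $z^{4\ss+2}$. Expanding about the quarter period by $u=K+v$ and using $\dn(K+v)=k'\,\mathrm{nd}(v)$ gives $\dn^2(K+v)=(1-x)/\dn^2(v)$, an even function of $v$ whose Maclaurin coefficients lie in $\Z[x]$, since $\dn^2(v)=1-x\,\sn^2(v)$ and $\sn^2(v)$ has $\Z[x]$-coefficients. Thus $D_p(x)$ equals $(p-1)!$ times such a coefficient, hence lies in $\Q[x]$ and is nonzero exactly because $p-1$ is even.

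For the parity I would pass to the complementary modulus $k'$ (that is, $x\mapsto1-x$) using Jacobi's imaginary transformation $\dn(u,k')=\dc(iu,k)$ together with the imaginary quarter-period shift $\dc(w+iK',k)=k\,\mathrm{cd}(w,k)$; these combine to $\dn^2(K'+v,k')=x\,\mathrm{cd}^2(iv,k)$. Writing $D_p(x)$ through the same quarter-period expansion as $D_p(x)=-x\,\frac{d^{p-1}}{dv^{p-1}}\mathrm{cd}^2(v,k)\big|_0$ (from $\dn^2=1-x\sn^2$ and $\sn(K+v)=\mathrm{cd}(v)$), and comparing the $v^{p-1}$-coefficient in both expressions — where the substitution $v\mapsto iv$ contributes a factor $i^{p-1}$ — yields the single identity
\[
D_p(1-x)=-\,i^{\,p-1}\,D_p(x).
\]
For $p=4\ss-1$ this reads $D_p(1-x)=D_p(x)$, so $D_p\in\Q[\gs]$, while for $p=4\ss+1$ it reads $D_p(1-x)=-D_p(x)$, so $D_p\in\Q[\gs]\gs'$, both by Lemma~\ref{lem-sigma-polyn-criterion}. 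Multiplying back by the rational prefactor and $z^{p+1}$ gives exactly the two asserted memberships (consistency check: $D_3=2\gs$ and $D_5=-8\gs\gs'$).

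The main obstacle is the bookkeeping of the factors of $i$ in the complementary-modulus step: they arise simultaneously from Jacobi's imaginary transformation, from the shift by the imaginary quarter period $iK'$, and from the chain rule $v\mapsto iv$, and their product $-i^{\,p-1}$ is precisely what discriminates $p\equiv3$ from $p\equiv1\pmod4$ and hence selects $\Q[\gs]$ versus $\Q[\gs]\gs'$; getting any one of these signs wrong interchanges the two cases. A minor point to verify is the analyticity justifying termwise differentiation of $\Phi$, which holds on the strip $|t|<y$.
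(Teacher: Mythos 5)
Your proof is correct, but it takes a genuinely different route from the paper's. The paper simply quotes its companion result [RXZ2023, Thm.\ 3.14], which gives the closed form ${\bar S}_{2\ss+1,1}(y)=\frac{(2\ss)!}{2^{2\ss+2}}z^{2\ss+2}\gs R_{2\ss}(x)$ with $R_{2\ss}$ built from the Maclaurin coefficients of $\sn^2(u,k)$, and then obtains the needed symmetry $R_{2\ss}(1-x)=(-1)^{\ss-1}R_{2\ss}(x)$ from the self-reciprocal functional equation $r_{2\ss}(x)=x^{\ss-1}r_{2\ss}(1/x)$ of those coefficients; Lemma~\ref{lem-sigma-polyn-criterion} then finishes exactly as in your argument. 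You instead rederive a closed form from scratch: the Fourier expansion of the Jacobi Zeta function plus the substitution $u=K+iKt/\pi$ gives ${\bar S}_{p,1}(y)=(iz/2)^{p+1}\,\frac{d^{p-1}}{du^{p-1}}\dn^2(u)\big|_{u=K}$ (essentially equivalent to \eqref{equ:accurateBarS}, but with derivatives of $\dn^2$ at the quarter period in place of the $\sn^2$-coefficients), and you get the $x\mapsto 1-x$ symmetry from Jacobi's imaginary transformation combined with the $iK'$-shift. I verified your transformation identities ($\dn(u,k')=\dc(iu,k)$, $\dc(w+iK',k)=k\,\mathrm{cd}(w,k)$, $\sn(K+v)=\mathrm{cd}(v)$) and the resulting relation $D_p(1-x)=-i^{\,p-1}D_p(x)$, which correctly separates $p\equiv 3$ from $p\equiv 1\pmod 4$; your consistency checks $D_3=2\gs$ and $D_5=-8\gs\gs'$ also hold. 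What your approach buys is self-containedness---everything reduces to classical facts at the Whittaker--Watson/Hancock level rather than to the authors' prior paper---and it makes the appearance of $\gs=x(1-x)$ transparent. What the paper's route buys is brevity and uniformity: the cited machinery serves several lemmas at once. It is worth noting that your parity argument via the imaginary transformation is actually closer in spirit to the paper's proof of the neighboring Lemma~\ref{lem-one-exform-cosh-Qx/2z}, which uses the Hancock identities \eqref{equ:Hancock} in the same way, than to its proof of this lemma.
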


\begin{proof}
By \cite[Theorem 3.14]{RXZ2023} we see that for any integer $m>0$,
\begin{equation}\label{equ:accurateBarS}
{\bar S}_{2\ss+1,1}(y)=\sum_{n=1}^{\infty}\frac{(-1)^{n-1}n^{2\ss+1}}{\sinh(ny)}=\frac{(2\ss)!}{2^{2\ss+2}}z^{2\ss+2}\gs R_{2\ss}(x),
\end{equation}
where
\begin{equation*}
R_{2\ss}(x)=\frac{(x-1)^{s-1}}{(2\ss)!} r_{2\ss}\Big(\frac{x}{x-1}\Big)\in \Q[x],
\end{equation*}
where  $r_{2n}(x)$ appears in the coefficients of the Maclaurin series of
$\sn^2(u,k)$ (see \cite[Lemma 3.11]{RXZ2023} where it was denoted by $q_{2n}(x)$) satisfying
\begin{equation*}
r_{2\ss}(x)=x^{s-1} r_{2\ss}(1/x).
\end{equation*}
Therefore
\begin{align*}
R_{2\ss}(1-x)=&\, \frac{(-1)^{\ss-1} x^{\ss-1}}{(2\ss)!} r_{2\ss}\Big(\frac{x-1}{x}\Big) \\
=&\, \frac{(-1)^{s-1} x^{\ss-1}}{(2\ss)!}\Big(\frac{x-1}{x}\Big)^{\ss-1}  r_{2\ss}\Big(\frac{x}{x-1}\Big)=(-1)^{\ss-1} R_{2\ss}(x).
\end{align*}
Hence for all $\ss\geq 1$
\begin{equation*}
R_{4\ss}(x)+R_{4\ss}(1-x)=0, \quad\text{and}   \quad R_{4\ss-2}(x)-R_{4\ss-2}(1-x)=0.
\end{equation*}
Therefore, by Lemma~\ref{lem-sigma-polyn-criterion},
\begin{align*}		
R_{4\ss}(x)\in \Q[\gs]\gs', \quad \text{and} \quad
R_{4\ss-2} \in \Q[\gs].
\end{align*}
The lemma follows quickly from \eqref{equ:accurateBarS}.
\end{proof}

\begin{lem}\label{lem-one-exform-cosh-Qx/2z}
Let $v=\sqrt{x(1-x)}$. For any integer $\ss\ge0$,
\begin{align*}		
\widetilde{C}_{4\ss+1,1}(y)\in  z^{4\ss+2}  \Q[v^2]v, \quad \text{and} \quad
\widetilde{C}_{4\ss+3,1}(y)\in  z^{4\ss+4}  \Q[v^2]v^2 v'.
\end{align*}
\end{lem}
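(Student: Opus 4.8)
The plan is to follow the template of Lemma~\ref{lem-one-exform-sinh-Qxz}: express $\widetilde{C}_{2\ss+1,1}(y)$ as an explicit power of $z$ times a polynomial in $x$, determine the symmetry of that polynomial under $x\mapsto 1-x$, and then invoke Lemma~\ref{lem-sigma-polyn-criterion}. The bridge is the Fourier expansion
\[
\cn(u)=\frac{\pi}{Kk}\sum_{n=1}^\infty \frac{1}{\cosh(\tn y)}\cos\Big(\frac{\tn\pi u}{K}\Big),
\]
valid with $q=e^{-y}$ as in \eqref{den-z-diff}. Differentiating $2j+1$ times in $u$ and setting $u=K$ makes the $n$-th cosine contribute $(\tn\pi/K)^{2j+1}(-1)^{n+j}$, so the series collapses to the alternating odd-power sum $\widetilde{C}_{2j+1,1}(y)$. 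Using $K=\pi z/2$ this reads
\[
\widetilde{C}_{2j+1,1}(y)=\frac{(-1)^{j+1}k\,z^{2j+2}}{2^{2j+2}}\,\frac{d^{2j+1}}{du^{2j+1}}\cn(u)\Big|_{u=K}.
\]

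Next I would reduce the derivative at $u=K$ to a Maclaurin coefficient at the origin via the quarter-period shift $\cn(u+K)=-k'\sd(u)$. Setting $D_j(x):=\frac{d^{2j+1}}{dt^{2j+1}}\sd(t)\big|_{t=0}$, which is $(2j+1)!$ times the coefficient of $t^{2j+1}$ in $\sd(t)$ and hence lies in $\Q[x]$ (every Jacobi function expands in powers of $t$ with coefficients polynomial in $k^2=x$), the shift gives
\[
\widetilde{C}_{2j+1,1}(y)=\frac{(-1)^{j}\,kk'\,z^{2j+2}}{2^{2j+2}}\,D_j(x)=\frac{(-1)^{j}z^{2j+2}}{2^{2j+2}}\,v\,D_j(x),
\]
since $kk'=\sqrt{x}\,\sqrt{1-x}=v$. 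The demanded factor $v$ thus appears automatically, as the product of the $k$ coming from the Fourier prefactor and the $k'$ produced by the shift, and at this point $\widetilde{C}_{2j+1,1}(y)\in z^{2j+2}\,\Q[x]\,v$.

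The crux is the parity of $D_j$ under $x\mapsto 1-x$, i.e. under $k\mapsto k'$. For this I would use Jacobi's imaginary transformation, which yields $\sd(iu,k)=i\,\sd(u,k')$. Matching the Maclaurin expansions of the two sides term by term forces $(-1)^jD_j(x)=D_j(1-x)$, so $D_j(1-x)=(-1)^jD_j(x)$. Consequently $D_{2\ss}$ is symmetric and $D_{2\ss+1}$ antisymmetric under $x\mapsto 1-x$, and Lemma~\ref{lem-sigma-polyn-criterion} (with $\gs=x(1-x)=v^2$) gives $D_{2\ss}(x)\in\Q[\gs]=\Q[v^2]$ and $D_{2\ss+1}(x)\in\Q[\gs]\gs'$.

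It remains to match the two target spaces. For $j=2\ss$, so $2j+1=4\ss+1$, we land in $z^{4\ss+2}\,v\,\Q[v^2]=z^{4\ss+2}\Q[v^2]v$, the first claim. For $j=2\ss+1$, so $2j+1=4\ss+3$, we land in $z^{4\ss+4}\,v\,\Q[v^2]\gs'$; since $v'=\gs'/(2v)$ gives $v^2v'=\tfrac12 v\gs'$, this space coincides with $z^{4\ss+4}\Q[v^2]v^2v'$, the second claim. I expect the main obstacle to be the correctly normalized setup of the first two displays—justifying term-by-term differentiation of the $\cn$-series at $u=K$ and tracking every sign and every power of $2$, $\pi$, $z$, $k$, $k'$ so that exactly $v=kk'$ (and not $k$ or $k'$ alone) survives—whereas the subsequent transformation-and-symmetry step is structurally identical to the $\bar S$ case already handled in Lemma~\ref{lem-one-exform-sinh-Qxz}.
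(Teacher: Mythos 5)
Your proof is correct and follows essentially the same route as the paper: both arguments rest on the identity $\widetilde{C}_{2\ss+1,1}(y)=\frac{(-1)^\ss}{2^{2\ss+2}}z^{2\ss+2}\,p_{2\ss+1}(x)\,v$ (with $p_{2\ss+1}$ the odd Maclaurin derivatives of $\sd$, your $D_j=p_{2j+1}$), the parity $p_{2j+1}(1-x)=(-1)^jp_{2j+1}(x)$ coming from the imaginary transformation $\sd(iu,k)=i\,\sd(u,k')$, and then Lemma~\ref{lem-sigma-polyn-criterion} together with $vv'=\gs'/2$. The only difference is that the paper imports the displayed identity from \cite[Lemma 3.1 and Theorem 3.2]{RXZ2023}, whereas you re-derive it from the Fourier expansion of $\cn$ and the quarter-period shift $\cn(u+K)=-k'\sd(u)$; your normalization, including the sign $(-1)^j$ and the factor $kk'=v$, agrees exactly with the cited formula, so the derivation is sound.
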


\begin{proof}
From  \cite[Lemma 3.1 and Theorem 3.2]{RXZ2023} we see that
\begin{equation}\label{equ:widetildeCbyp(x)}
\widetilde{C}_{2\ss+1,1}(y)=\sum_{n=1}^{\infty}\frac{(-1)^{n-1}\tn^{2\ss+1}}{\cosh\big( \tn y\big)}
=\frac{(-1)^\ss}{2^{2\ss+2}}z^{2\ss+2}  p_{2\ss+1}(x) v ,
\end{equation}
where the
\begin{equation*}
\sum_{n\geq 0} \frac{p_n(x)}{n!} u^n
\end{equation*}
is the Maclaurin series of $\sd(u)$.  We have the following classical result
(see the bottom of page 47 of Hancock's book \cite{Hancock1910}):
\begin{equation}\label{equ:Hancock}
\sn(iu,\sqrt{1-k^2})=i\frac{\sn(u,k)}{\cn(u,k)},\quad   \dn(iu,\sqrt{1-k^2})=\frac{\dn(u,k)}{\cn(u,k)},
\end{equation}
where $i=\sqrt{-1}$. Therefore
\begin{align*}
i\sd(iu,\sqrt{1-k^2})+\sd(u,k)=0.	
\end{align*}
This easily implies that $p_{2\ss}(x)=0$,
\begin{align*}
p_{4\ss+1}(1-x)-p_{4\ss+1}(x)=0,\quad \text{and} \quad p_{4\ss+3}(1-x)+p_{4\ss+3}(x)=0
\end{align*}
for all $m\geq 0$. By \eqref{equ:widetildeCbyp(x)} and Lemma~\ref{lem-sigma-polyn-criterion}
\begin{align*}		
\widetilde{C}_{4\ss+1,1}(y)\in  z^{4\ss+2} \Q[v^2]v, \quad \text{and} \quad
\widetilde{C}_{4\ss+3,1}(y)\in  z^{4\ss+4} \Q[v^2]v\gs'.
\end{align*}
The lemma follows quickly from the fact that $v v'=\gs'/2$.
\end{proof}

\begin{lem}\label{lem-one-exform-cosh-Qxz}
For any integer $\ss\geq 1$ we have
\begin{align*}		
C'_{4\ss,2}(y)\in z^{4\ss+2}\Q[\gs]\gs' +z^{4\ss+1}z' \Q[\gs], \quad \text{and} \quad
C'_{4\ss-2,2}(y)\in  z^{4\ss}\Q[\gs] +z^{4\ss-1}z' \Q[\gs]\gs'.
\end{align*}
\end{lem}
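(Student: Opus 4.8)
The plan is to reduce the assertion to the $\gs$-symmetry criterion of Lemma~\ref{lem-sigma-polyn-criterion}, exactly as in the proofs of Lemmas~\ref{lem-one-exform-sinh-Qxz} and \ref{lem-one-exform-cosh-Qx/2z}; the only genuinely new feature is the factor $z'$, whose origin is the classical relation $dx/dy=-\gs z^2$ (coming from $q\,dx/dq=\gs z^2$ with $q=e^{-y}$). This identity converts $y$-differentiation into $x$-differentiation and unavoidably produces a $z'$. Concretely, I would first record the elementary observation
\begin{equation*}
C'_{2m,2}(y)=\sum_{n=1}^\infty\frac{\tn^{2m}}{\cosh^2(\tn y)}=\frac{d}{dy}\,T_{2m-1}(y),\qquad T_{2m-1}(y):=\sum_{n=1}^\infty \tn^{2m-1}\bigl(\tanh(\tn y)-1\bigr),
\end{equation*}
which is valid term by term since $\tanh(\tn y)-1$ decays exponentially and $\tfrac{d}{dy}\tanh(\tn y)=\tn/\cosh^2(\tn y)$. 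This reduces everything to controlling the single ``primitive'' sum $T_{2m-1}$.

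The crux, which I expect to be the main obstacle, is to establish a closed form for $T_{2m-1}$ of the shape
\begin{equation*}
T_{2m-1}(y)=c_m\,z^{2m}\,W_m(x),\qquad W_m\in\Q[x],\quad c_m\in\Q,
\end{equation*}
in complete analogy with \eqref{equ:widetildeCbyp(x)} and \eqref{equ:accurateBarS}. This is a ``power-one'' evaluation and should come from the Fourier/Maclaurin expansion of the Jacobi function whose poles sit at the zeros of $\cn$, so that the denominators are $\cosh(\tn y)$ with the non-alternating sign; the $z^{2m}$ scaling is forced by the argument $\pi u/(2K)$ in that expansion. Two points make this step delicate: identifying the correct elliptic function and, crucially, verifying that its relevant Maclaurin coefficients are \emph{polynomials in $x$ alone} (no dependence on the second-kind integral $E$), so that $T_{2m-1}$ carries only a pure power of $z$. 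This last fact is in fact retroactively guaranteed by the shape of the target, which contains no $z''$ term. The same expansion, fed through Jacobi's imaginary transformation $k\leftrightarrow k'$ (equivalently $x\leftrightarrow1-x$) in the manner \eqref{equ:Hancock} was used for $\sd$, must then pin down the parity
\begin{equation*}
W_m(1-x)=(-1)^m\,W_m(x).
\end{equation*}

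Granting these inputs, the remainder is formal. Differentiating and using $dx/dy=-\gs z^2$ gives
\begin{equation*}
C'_{2m,2}(y)=-c_m\,\gs\,z^{2m+2}\,W_m'(x)-2m\,c_m\,\gs\,z^{2m+1}z'\,W_m(x),
\end{equation*}
so the two summands share the single polynomial $W_m$, and this is exactly what couples their symmetries. Since $\gs=x(1-x)$ is invariant and differentiation reverses parity under $x\mapsto1-x$, the relation $W_m(1-x)=(-1)^mW_m(x)$ makes $\gs W_m$ and $\gs W_m'$ of opposite symmetry type; Lemma~\ref{lem-sigma-polyn-criterion} then places $\gs W_m'\in\Q[\gs]\gs'$ and $\gs W_m\in\Q[\gs]$ when $m$ is even, and reverses the two conclusions when $m$ is odd. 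Writing $m=2\ss$ yields $C'_{4\ss,2}(y)\in z^{4\ss+2}\Q[\gs]\gs'+z^{4\ss+1}z'\Q[\gs]$, and writing $m=2\ss-1$ yields $C'_{4\ss-2,2}(y)\in z^{4\ss}\Q[\gs]+z^{4\ss-1}z'\Q[\gs]\gs'$, which are precisely the two asserted memberships.
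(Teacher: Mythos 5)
Your plan has the same architecture as the paper's actual proof: a closed-form evaluation of the hyperbolic sum in terms of $z$, $z'$ and a polynomial in $x$, then Jacobi's imaginary transformation to extract the parity under $x\mapsto 1-x$, then the symmetry criterion of Lemma~\ref{lem-sigma-polyn-criterion}. Your reduction $C'_{2m,2}(y)=\frac{d}{dy}T_{2m-1}(y)$ is valid, and your formal endgame via $dx/dy=-\gs z^2$ is correct: granting your two unproved inputs, it delivers exactly the two asserted memberships. The problem is that those two inputs are the entire content of the lemma, and you do not prove either of them. The closed form is precisely what the paper imports from its earlier work: by \cite[Cor.~7.4 and Cor.~8.9]{XZ2023} (equation \eqref{equ:C'byq(x)} in the paper),
\begin{equation*}
C'_{2\ss,2}(y)=\frac{(-1)^{\ss}}{2^{2\ss+1}\ss}\,\gs\, z^{2\ss+1}\Big(z q'_{2\ss}(x)+2\ss\, z'q_{2\ss}(x)\Big),
\end{equation*}
where the $q_n\in\Q[x]$ are the Maclaurin coefficients of $u\ds(u)$; this is exactly the expression your differentiation produces, with your $W_m$ playing the role of $q_{2m}$, and the parity $q_{4\ss}(1-x)=q_{4\ss}(x)$, $q_{4\ss+2}(1-x)=-q_{4\ss+2}(x)$ then follows from \eqref{equ:Hancock} as in the paper. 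Without this identity (or an equivalent one) your argument is not a proof; worse, your justification for its most delicate feature --- that the absence of the second-kind integral $E$ (equivalently of $z''$) is ``retroactively guaranteed by the shape of the target'' --- is circular, since the shape of the target is exactly what is being proved.

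Two further inaccuracies in the sketch are worth flagging. First, the closed form you posit, $T_{2m-1}(y)=c_m z^{2m}W_m(x)$ with $W_m\in\Q[x]$, is actually false as stated: integrating \eqref{equ:C'byq(x)} in $y$ and letting $y\to\infty$ (so $x\to 0$, $z\to 1$, $T_{2m-1}\to 0$) gives $T_{2m-1}(y)=c\big(z^{2m}q_{2m}(x)-q_{2m}(0)\big)$, and $q_{2m}(0)\neq 0$ because it is a Maclaurin coefficient of $u\ds(u,0)=u/\sin u$; a nonzero additive constant cannot be absorbed into $z^{2m}\Q[x]$ since $z$ is transcendental over $\Q(x)$. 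This is harmless for your endgame (only $dT_{2m-1}/dy$ enters), but it shows the step was guessed rather than derived. Second, the relevant elliptic function is $u\ds(u)=u\dn(u)/\sn(u)$, whose poles lie at the zeros of $\sn$, not a function with poles at the zeros of $\cn$: functions of the latter kind (such as $\nc$ and $\dc$) have Fourier coefficients carrying an extra factor $(-1)^n$ and generate the alternating sums $\widetilde{C}_{p,m}$, not the non-alternating sums $C'_{p,m}$ at issue here. Once \eqref{equ:C'byq(x)} is quoted and the parity of $q_{2\ss}$ is extracted from \eqref{equ:Hancock}, your symmetry bookkeeping correctly finishes the proof; the gap is localized entirely in the unproven closed form.
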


\begin{proof}
By \cite[Cor.~7.4 and Cor.~8.9]{XZ2023}, for any integer $m\geq 1$ we have
\begin{equation}\label{equ:C'byq(x)}
C'_{2\ss,2}(y)=\sum_{n=1}^\infty \frac{\tn^{2\ss}}{\cosh^2(\tn y)}
 =\frac{(-1)^{\ss}}{2^{2\ss+1} m} \gs z^{2\ss+1}\Big(z q'_{2\ss}(x)+2\ss z'q_{2\ss}(x)\Big),
\end{equation}
where
\begin{equation*}
\sum_{n\geq 0} \frac{q_n(x)}{n!} u^n
\end{equation*}
is the Maclaurin series of $u\ds(u)$.  By \eqref{equ:Hancock} we see that
\begin{align*}
iu\ds(iu,\sqrt{1-k^2})-u\ds(u,k)=0.	
\end{align*}
This easily implies that $q_{2\ss+1}(x)=0$,
\begin{align*}
q_{4\ss}(1-x)-q_{4\ss}(x)=0,\quad \text{and} \quad q_{4\ss+2}(1-x)+q_{4\ss+2}(x)=0
\end{align*}
for all $m\geq 0$. The lemma follows immediately from \eqref{equ:C'byq(x)} and Lemma~\ref{lem-sigma-polyn-criterion}.
\end{proof}

\begin{lem}\label{lem-one-noexform-sinh-Qxz}
\emph{(cf. \cite{XZ2023})}
Let $\gs=x(1-x)$. For any integer $\ss\geq 1$ we have
\begin{align*}
S_{2,2}(y)  &\, \in z^4\Q[\gs]+z^3z'\Q[\gs]\gs'+z^2(z')^2\Q[\gs] ,\\
S_{4\ss+2,2}(y) &\, \in z^{4\ss+4}\Q[\gs]+z^{4\ss+3}z'\Q[\gs]\gs',   \\
S_{4\ss,2}(y) &\, \in z^{4\ss+2}\Q[\gs]\gs'+z^{4\ss+1}z'\Q[\gs].
\end{align*}
\end{lem}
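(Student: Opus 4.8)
The plan is to reuse the three-step template of the preceding lemmas. First I would invoke from \cite{XZ2023} an explicit closed form for the even-index sums $S_{2t,2}(y)$ ($t\ge1$), entirely parallel to \eqref{equ:C'byq(x)}, of the shape
\[
S_{2t,2}(y)=c_t\,\gs^{\,\epsilon}\,z^{2t+1}\bigl(z\,g'_{2t}(x)+2t\,z'\,g_{2t}(x)\bigr),
\]
where $c_t\in\Q$, $\epsilon\in\{0,1\}$, and the $g_n(x)\in\Q[x]$ are Maclaurin coefficients of the Jacobi elliptic function attached to the $\sinh^2$-kernel — the $\sinh$-companion, under the imaginary-argument transformation \eqref{equ:Hancock}, of the function $u\ds(u)$ that governs $C'_{2t,2}$ in \eqref{equ:C'byq(x)}. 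The pairing $z\,g'+2t\,z'g$ is precisely what splits the answer into a ``$z$-part'' and a ``$z'$-part'' matching the two displayed target spaces, and any leading power of $\gs$ is harmless because $\gs\in\Q[\gs]$.

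The core of the argument is the parity analysis. Exactly as in the proofs of Lemmas~\ref{lem-one-exform-sinh-Qxz}--\ref{lem-one-exform-cosh-Qxz}, the transformation \eqref{equ:Hancock} pins down the behaviour of the governing elliptic function under $(u,k)\mapsto(iu,k')$, which forces $g_{4\ss}(1-x)=g_{4\ss}(x)$ and $g_{4\ss+2}(1-x)=-g_{4\ss+2}(x)$. Lemma~\ref{lem-sigma-polyn-criterion} then yields $g_{4\ss}\in\Q[\gs]$ and $g_{4\ss+2}\in\Q[\gs]\gs'$. Differentiation in $x$ interchanges these two classes: indeed $\tfrac{d}{dx}f(\gs)=f'(\gs)\gs'$ sends $\Q[\gs]$ into $\Q[\gs]\gs'$, while $\gs'^2=1-4\gs$ and $\gs''=-2$ send $\Q[\gs]\gs'$ back into $\Q[\gs]$. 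Hence $g'_{2t}$ always lies in the class opposite to $g_{2t}$, and feeding this into the closed form, the $z$-summand $c_t\gs^\epsilon z^{2t+2}g'_{2t}$ and the $z'$-summand $2t\,c_t\gs^\epsilon z^{2t+1}z'g_{2t}$ land in $z^{4\ss+2}\Q[\gs]\gs'+z^{4\ss+1}z'\Q[\gs]$ for $t=2\ss$, and in $z^{4\ss+4}\Q[\gs]+z^{4\ss+3}z'\Q[\gs]\gs'$ for $t=2\ss+1$. These are the second and third assertions of the lemma.

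The genuinely separate, and hardest, case is $S_{2,2}(y)$, whose target carries the extra block $z^2(z')^2\Q[\gs]$ absent from the generic pattern. This is the lowest-weight anomaly: at this weight the relevant Fourier/Lambert expansion contributes a constant term governed by the complete elliptic integral of the second kind $E$, and eliminating $E$ via the classical derivative identity $\pi\gs z'=E-(1-x)\tfrac{\pi}{2}z$ manufactures a true $(z')^2$ contribution. Accordingly I would treat $S_{2,2}$ by hand, writing its closed form from \cite{XZ2023} as $z^4A(x)+z^3z'B(x)+z^2(z')^2C(x)$ with $A,B,C\in\Q[x]$, verifying the three parities $A(1-x)=A(x)$, $B(1-x)=-B(x)$, $C(1-x)=C(x)$, and applying Lemma~\ref{lem-sigma-polyn-criterion} to each of $A,B,C$ in turn. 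I expect the main obstacle to be exactly this disentangling: one must separate the $E$-contribution from the $K$-contribution cleanly enough that the three coefficient polynomials have the pure parities claimed, rather than mixing, before the $\gs$-criterion can be invoked.
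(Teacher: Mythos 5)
Your plan is viable, but it is not the paper's proof, and one identification in your first step is wrong as stated and would need repair. The paper's proof is a one-line reduction: since $1/\sinh^2(ny)=4q^{2n}/(1-q^{2n})^2$, one has $S_{2\ss,2}(y)=4\Phi_{1,2\ss}(q^2)$ in Ramanujan's notation, and the entire structure theorem (including the $\Q[\gs]$ versus $\Q[\gs]\gs'$ refinement and the exceptional $z^2(z')^2$ block at $\ss=1$) is quoted from \cite[Prop.\ 3.2]{XZ2023}, which generalizes Ramanujan's formula for these double series. You instead re-derive that structure by the mechanism of Lemmas~\ref{lem-one-exform-sinh-Qxz}--\ref{lem-one-exform-cosh-Qxz}: a closed form of the shape $c_t\,\gs\,z^{2t+1}(zg'_{2t}+2t\,z'g_{2t})$, Hancock-type parity for $g_{2t}$, Lemma~\ref{lem-sigma-polyn-criterion}, and the fact that $d/dx$ interchanges $\Q[\gs]$ and $\Q[\gs]\gs'$. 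Your guessed closed form is in fact correct for $t\geq 2$: writing the Lambert series $\sum_n n^{2t-1}q^{2n}/(1-q^{2n})$ as an Eisenstein series $z^{2t}g_{2t}(x)$ and using $q\,d/dq=-d/dy$ together with \eqref{equ:dx/dy} gives exactly that shape with $\epsilon=1$, and your bookkeeping of the $t=2\ss$ and $t=2\ss+1$ cases and of the derivative swap is right. What the paper's citation buys is precisely the avoidance of all this work; what your route buys is a self-contained argument uniform with the neighboring lemmas.

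The flaw: there is no ``$\sinh$-companion of $u\ds(u)$''. By \eqref{equ:Hancock} one has $iu\ds(iu,\sqrt{1-k^2})=u\ds(u,k)$ --- this invariance is exactly what the paper uses to prove Lemma~\ref{lem-one-exform-cosh-Qxz} --- so the Hancock transform of $u\ds(u)$ is $u\ds(u)$ itself and cannot be a new function attached to the $\sinh^2$-kernel. The function actually governing $S_{2t,2}$ is $u^2/\sn^2(u)$, and its Hancock transform is \emph{inhomogeneous}: $(iu)^2/\sn^2(iu,\sqrt{1-k^2})=u^2\cn^2(u,k)/\sn^2(u,k)=u^2/\sn^2(u,k)-u^2$. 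For coefficients of $u^{2t}$ with $t\geq 2$ the correction $-u^2$ is invisible, so your parities $g_{4\ss}(1-x)=g_{4\ss}(x)$ and $g_{4\ss+2}(1-x)=-g_{4\ss+2}(x)$ do hold; at $t=1$ it is exactly this correction (equivalently, the quasi-modularity behind the constant term you attribute to $E$) that breaks parity and creates the $z^2(z')^2$ block, so your separate treatment of $S_{2,2}$ is forced, not optional. That part of your sketch is sound: $\pi\gs z'=E-(1-x)\tfrac{\pi}{2}z$ is the correct identity, and carrying it out yields $S_{2,2}(y)=\tfrac{1}{24}\gs z^4-\tfrac{1}{6}\gs\gs' z^3z'-\tfrac{1}{2}\gs^2 z^2(z')^2$, whose three coefficients have precisely the parities you predict, confirming the first inclusion of the lemma.
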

\begin{proof}
This follows immediately from \cite[Prop. 3.2]{XZ2023} generalizing a result of Ramanujan \cite[Page 142, Eq. (29)]{Rama1916}. Note that
\begin{equation*}
S_{2\ss,2}(y)=\sum_{n=1}^\infty \frac{n^{2\ss}}{\sinh^2(ny)} =\sum_{n=1}^\infty \frac{4q^{2n} n^{2\ss}}{(1-q^{2n})^2} =4\Phi_{1,2\ss}=4\Phi_{1,2\ss}(q^2)
\end{equation*}
in the notation of \cite{Rama1916,XZ2023}.
\end{proof}

For convenience, let $\widetilde{B}_{k,l}:=(-1)^l B_{k,l}$, $D_{k,l}:=(2k+1)!A_{2k+2,2l+2}$ and $\widetilde{D}_{k,l}:=(-1)^l(2k+1)!A_{2k+2,2l+2}=(-1)^lD_{k,l}\ (0\leq l\leq k)$.
We define four $(k+1)\times (k+1)$ matrixs ${\bf B}_{k+1}, {\bf \widetilde{B}}_{k+1},{\bf D}_{k+1}$ and ${\bf \widetilde{D}}_{k+1}$ by
\begin{align}
&{\bf B}_{k+1}:=\left( {\begin{array}{*{20}{c}}
{{B_{0,0}}}&0&0& \cdots &0\\
{{B_{1,0}}}&{{B_{1,1}}}&0& \cdots &0\\
{{B_{2,0}}}&{{B_{2,1}}}&{{B_{2,2}}}& \cdots &0\\
 \vdots & \vdots & \vdots & \ddots & \vdots \\
{{B_{k,0}}}&{{B_{k,1}}}&{{B_{k,2}}}& \cdots &{{B_{k,k}}}
\end{array}} \right),\quad {\bf \widetilde{B}}_{k+1}:=\left( {\begin{array}{*{20}{c}}
{{\widetilde{B}_{0,0}}}&0&0& \cdots &0\\
{{\widetilde{B}_{1,0}}}&{{\widetilde{B}_{1,1}}}&0& \cdots &0\\
{{\widetilde{B}_{2,0}}}&{{\widetilde{B}_{2,1}}}&{{\widetilde{B}_{2,2}}}& \cdots &0\\
 \vdots & \vdots & \vdots & \ddots & \vdots \\
{{\widetilde{B}_{k,0}}}&{{\widetilde{B}_{k,1}}}&{{\widetilde{B}_{k,2}}}& \cdots &{{\widetilde{B}_{k,k}}}
\end{array}} \right),\label{defn-matrx-orig-one}\\
&{\bf D}_{k+1}:=\left( {\begin{array}{*{20}{c}}
{{D_{0,0}}}&0&0& \cdots &0\\
{{D_{1,0}}}&{{D_{1,1}}}&0& \cdots &0\\
{{D_{2,0}}}&{{D_{2,1}}}&{{D_{2,2}}}& \cdots &0\\
 \vdots & \vdots & \vdots & \ddots & \vdots \\
{{D_{k,0}}}&{{D_{k,1}}}&{{D_{k,2}}}& \cdots &{{D_{k,k}}}
\end{array}} \right),\quad {\bf \widetilde{D}}_{k+1}:=\left( {\begin{array}{*{20}{c}}
{{\widetilde{D}_{0,0}}}&0&0& \cdots &0\\
{{\widetilde{D}_{1,0}}}&{{\widetilde{D}_{1,1}}}&0& \cdots &0\\
{{\widetilde{D}_{2,0}}}&{{\widetilde{D}_{2,1}}}&{{\widetilde{D}_{2,2}}}& \cdots &0\\
 \vdots & \vdots & \vdots & \ddots & \vdots \\
{{\widetilde{D}_{k,0}}}&{{\widetilde{D}_{k,1}}}&{{\widetilde{D}_{k,2}}}& \cdots &{{\widetilde{D}_{k,k}}}
\end{array}} \right).\label{defn-matrx-orig-two}
\end{align}
From linear algebra we know that the matrix $\left( {\begin{array}{*{20}{c}}
A&0\\
C&B
\end{array}} \right)$ is invertible with
\[{\left( {\begin{array}{*{20}{c}}
A&0\\
C&B
\end{array}} \right)^{ - 1}} = \left( {\begin{array}{*{20}{c}}
{{A^{ - 1}}}&0\\
{ - {B^{ - 1}}C{A^{ - 1}}}&{{B^{ - 1}}}
\end{array}} \right).\]
Therefore, the four inverse matrixs ${\bf B}_{k+1}^{ - 1},{\bf \widetilde{B}}_{k+1}^{ - 1},{\bf D}_{k+1}^{ - 1}$ and ${\bf \widetilde{D}}_{k+1}^{ - 1}$ can be written in the following forms
\begin{align}
&{\bf B}_{k+1}^{ - 1}=\left( {\begin{array}{*{20}{c}}
{{b_{0,0}}}&0&0& \cdots &0\\
{{b_{1,0}}}&{{b_{1,1}}}&0& \cdots &0\\
{{b_{2,0}}}&{{b_{2,1}}}&{{b_{2,2}}}& \cdots &0\\
 \vdots & \vdots & \vdots & \ddots & \vdots \\
{{b_{k,0}}}&{{b_{k,1}}}&{{b_{k,2}}}& \cdots &{{b_{k,k}}}
\end{array}} \right),\quad {\bf \widetilde{B}}_{k+1}^{ - 1}=\left( {\begin{array}{*{20}{c}}
{{\widetilde{b}_{0,0}}}&0&0& \cdots &0\\
{{\widetilde{b}_{1,0}}}&{{\widetilde{b}_{1,1}}}&0& \cdots &0\\
{{\widetilde{b}_{2,0}}}&{{\widetilde{b}_{2,1}}}&{{\widetilde{b}_{2,2}}}& \cdots &0\\
 \vdots & \vdots & \vdots & \ddots & \vdots \\
{{\widetilde{b}_{k,0}}}&{{\widetilde{b}_{k,1}}}&{{\widetilde{b}_{k,2}}}& \cdots &{{\widetilde{b}_{k,k}}}
\end{array}} \right),\label{defn-matrx-one}\\
&{\bf D}_{k+1}^{ - 1}=\left( {\begin{array}{*{20}{c}}
{{d_{0,0}}}&0&0& \cdots &0\\
{{d_{1,0}}}&{{d_{1,1}}}&0& \cdots &0\\
{{d_{2,0}}}&{{d_{2,1}}}&{{d_{2,2}}}& \cdots &0\\
 \vdots & \vdots & \vdots & \ddots & \vdots \\
{{d_{k,0}}}&{{d_{k,1}}}&{{d_{k,2}}}& \cdots &{{d_{k,k}}}
\end{array}} \right),\quad {\bf \widetilde{D}}_{k+1}^{ - 1}=\left( {\begin{array}{*{20}{c}}
{{\widetilde{d}_{0,0}}}&0&0& \cdots &0\\
{{\widetilde{d}_{1,0}}}&{{\widetilde{d}_{1,1}}}&0& \cdots &0\\
{{\widetilde{d}_{2,0}}}&{{\widetilde{d}_{2,1}}}&{{\widetilde{d}_{2,2}}}& \cdots &0\\
 \vdots & \vdots & \vdots & \ddots & \vdots \\
{{\widetilde{d}_{k,0}}}&{{\widetilde{d}_{k,1}}}&{{\widetilde{d}_{k,2}}}& \cdots &{{\widetilde{d}_{k,k}}}
\end{array}} \right),\label{defn-matrx-two}
\end{align}
where $b_{j,l},\widetilde{b}_{j,l},d_{j,l},\widetilde{d}_{j,l}\in \Q$ for $0\leq j,l\leq k$.

\begin{thm}\label{thm-alter-Sinh-one}
For any integers $m\geq 1$ and $k\geq 0$, we have
\begin{align*}
{\bar S}_{2(m+k)-1,2k+1}(y)=\sum_{j=0}^{k}b_j \frac{d^{2j}}{dy^{2j}}\Big({\bar S}_{2(m+k-j)-1}(y)\Big) ,
\end{align*}
where ${\bar S}_p(y):={\bar S}_{p,1}(y)$ and $b_j\in \Q$.
\end{thm}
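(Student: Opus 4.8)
The plan is to differentiate the depth-one alternating sum ${\bar S}_{q,1}(y)$ an even number of times and to recognize, via the lemma \eqref{equ-sinh-single-one}, that the result is precisely a triangular combination of the odd-denominator sums we wish to control; the statement then follows by inverting the matrix ${\bf B}_{k+1}$.

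First I would record the effect of $\frac{d^{2j}}{dy^{2j}}$ on ${\bar S}_{q,1}(y)=\sum_{n\ge1}(-1)^{n-1}n^q/\sinh(ny)$. Since $\frac{d^{2j}}{dy^{2j}}\frac{1}{\sinh(ny)}=n^{2j}\big[\frac{d^{2j}}{dx^{2j}}\frac{1}{\sinh(x)}\big]_{x=ny}$, the lemma \eqref{equ-sinh-single-one} yields, after interchanging the (finite) inner sum with the outer one,
\begin{align*}
\frac{d^{2j}}{dy^{2j}}{\bar S}_{q,1}(y)=\sum_{l=0}^{j}B_{j,l}\,{\bar S}_{q+2j,\,2l+1}(y).
\end{align*}
The exponential decay of the summands for $y>0$ makes the term-by-term differentiation and the interchange rigorous.

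The key observation is the choice $q=2(m+k-j)-1$, for which $q+2j=2(m+k)-1$ is \emph{independent of} $j$: every sum on the right then carries the same numerator exponent $2(m+k)-1$. Writing $T_l:={\bar S}_{2(m+k)-1,\,2l+1}(y)$ and $U_j:=\frac{d^{2j}}{dy^{2j}}{\bar S}_{2(m+k-j)-1}(y)$, the display becomes $U_j=\sum_{l=0}^{j}B_{j,l}T_l$ for $0\le j\le k$, that is $\mathbf U={\bf B}_{k+1}\mathbf T$ with $\mathbf U=(U_0,\dots,U_k)^{\mathsf T}$ and $\mathbf T=(T_0,\dots,T_k)^{\mathsf T}$, where ${\bf B}_{k+1}$ is exactly the matrix of \eqref{defn-matrx-orig-one}.

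Finally I would invert the system. The matrix ${\bf B}_{k+1}$ is lower triangular with nonzero diagonal $B_{j,j}=(2j)!$, hence invertible over $\Q$ with inverse as in \eqref{defn-matrx-one}; reading off the last row of $\mathbf T={\bf B}_{k+1}^{-1}\mathbf U$ gives $T_k=\sum_{j=0}^{k}b_{k,j}U_j$, which is precisely the claimed formula with $b_j:=b_{k,j}\in\Q$. I do not expect any genuine obstacle here: the analytic input (differentiating under the sum) is immediate from exponential decay, and the linear algebra is automatic. The only substantive point is spotting the substitution $q=2(m+k-j)-1$ that forces a constant numerator exponent, since this is what makes the triangular system close and renders ${\bf B}_{k+1}$ the relevant matrix.
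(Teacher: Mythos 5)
Your proposal is correct and follows essentially the same route as the paper: differentiate ${\bar S}_{2(m+k-j)-1,1}(y)$ exactly $2j$ times via \eqref{equ-sinh-single-one} so that the numerator exponent $2(m+k)-1$ is the same for every $j$, assemble the resulting relations into the lower-triangular system governed by ${\bf B}_{k+1}$, and invert it to read off the last row. The only difference is cosmetic: you make explicit the justification for term-by-term differentiation (exponential decay) and the invertibility of ${\bf B}_{k+1}$ (nonzero diagonal $B_{j,j}=(2j)!$), which the paper leaves implicit.
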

\begin{proof}
Using \eqref{equ-sinh-single-one}, by a direct calculation, one obtains
\begin{align*}
\frac{d^{2k}}{dy^{2k}}  \left(\sum_{n=1}^\infty\frac{(-1)^{n-1}n^{2m-1}}{\sinh(ny)}\right)=\sum_{l=0}^k B_{k,l}\sum_{n=1}^\infty\frac{(-1)^{n-1}n^{2m+2k-1}}{\sinh^{2l+1}(ny)}.
\end{align*}
Thus,
\begin{align*}
\left({\bar S}_{2m-1}(y)\right)^{(2k)}:=\frac{d^{2k}}{dy^{2k}}{\bar S}_{2m-1}(y)=\sum_{l=0}^k B_{k,l}{\bar S}_{2m+2k-1,2l+1}(y).
\end{align*}
Hence, we have
\[\left( {\begin{array}{*{20}{c}}
{\bar S_{2(m + k) - 1}^{\left( 0 \right)}\left( y \right)}\\
{\bar S_{2(m + k) - 3}^{\left( 2 \right)}\left( y \right)}\\
{\bar S_{2(m + k) - 5}^{\left( 4 \right)}\left( y \right)}\\
 \vdots \\
{\bar S_{2m - 1}^{\left( {2k} \right)}\left( y \right)}
\end{array}} \right) = {\bf B}_{k+1}\left( {\begin{array}{*{20}{c}}
{{{\bar S}_{2(m + k) - 1,1}}\left( y \right)}\\
{{{\bar S}_{2(m + k) - 1,3}}\left( y \right)}\\
{{{\bar S}_{2(m + k) - 1,5}}\left( y \right)}\\
 \vdots \\
{{{\bar S}_{2(m + k) - 1,2k + 1}}\left( y \right)}
\end{array}} \right).\]
Then,
\begin{align}\label{matrix-one-Sinh-funct}
\left( {\begin{array}{*{20}{c}}
{{{\bar S}_{2(m + k) - 1,1}}\left( y \right)}\\
{{{\bar S}_{2(m + k) - 1,3}}\left( y \right)}\\
{{{\bar S}_{2(m + k) - 1,5}}\left( y \right)}\\
 \vdots \\
{{{\bar S}_{2(m + k) - 1,2k + 1}}\left( y \right)}
\end{array}} \right) = {\bf B}_{k+1}^{ - 1}\left( {\begin{array}{*{20}{c}}
{\bar S_{2(m + k) - 1}^{\left( 0 \right)}\left( y \right)}\\
{\bar S_{2(m + k) - 3}^{\left( 2 \right)}\left( y \right)}\\
{\bar S_{2(m + k) - 5}^{\left( 4 \right)}\left( y \right)}\\
 \vdots \\
{\bar S_{2m - 1}^{\left( {2k} \right)}\left( y \right)}
\end{array}} \right).
\end{align}
Finally, setting $b_j:=b_{k,j}$, and applying \eqref{defn-matrx-one} and \eqref{matrix-one-Sinh-funct} gives the desired result.
\end{proof}

\begin{thm}\label{thm-alter-Cosh-one}
For any integers $m\geq 1$ and $k\geq 0$, we have
\begin{align}
\widetilde{C}_{2(m+k)-1,2k+1}(y)=\sum_{j=0}^{k}\widetilde{b}_j\frac{d^{2j}}{dy^{2j}}\Big(\widetilde{C}_{2(m+k-j)-1}(y)\Big) ,
\end{align}
where $\widetilde{C}_p(y):=\widetilde{C}_{p,1}(y)$ and $\widetilde{b}_j\in \Q$.
\end{thm}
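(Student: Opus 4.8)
The plan is to mirror the proof of Theorem~\ref{thm-alter-Sinh-one} almost verbatim, replacing the hyperbolic sine by the hyperbolic cosine and invoking \eqref{equ-cosh-single-one} in place of \eqref{equ-sinh-single-one}. The decisive structural observation is that the alternating sign $(-1)^l$ appearing in \eqref{equ-cosh-single-one} is exactly what turns the coefficients $B_{k,l}$ into $\widetilde{B}_{k,l}=(-1)^l B_{k,l}$, so that the lower-triangular matrix ${\bf \widetilde{B}}_{k+1}$ of \eqref{defn-matrx-orig-one} (rather than ${\bf B}_{k+1}$) is the one governing this case. Everything else is identical bookkeeping.

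First I would differentiate the single alternating cosh series. Since the argument of $\cosh$ is $\tn y$, the chain rule contributes a factor $\tn^{2k}$, so \eqref{equ-cosh-single-one} gives
\[
\frac{d^{2k}}{dy^{2k}}\left(\frac{1}{\cosh(\tn y)}\right)=\tn^{2k}\sum_{l=0}^k (-1)^l \frac{B_{k,l}}{\cosh^{2l+1}(\tn y)}.
\]
Multiplying by $(-1)^{n-1}\tn^{2m-1}$ and summing over $n\ge1$ (term-by-term differentiation being justified by the rapid convergence of all the series involved), I obtain, with the notation $\big(\widetilde{C}_{2m-1}(y)\big)^{(2k)}:=\tfrac{d^{2k}}{dy^{2k}}\widetilde{C}_{2m-1}(y)$,
\[
\big(\widetilde{C}_{2m-1}(y)\big)^{(2k)}=\sum_{l=0}^k \widetilde{B}_{k,l}\,\widetilde{C}_{2m+2k-1,2l+1}(y),
\]
where $\widetilde{B}_{k,l}=(-1)^l B_{k,l}$ as above.

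Next I would assemble these relations for $j=0,1,\dots,k$ into a single matrix identity. Replacing $m$ by $m+k-j$ and $k$ by $j$ in the previous display, the upper index $2(m+k-j)+2j-1=2(m+k)-1$ remains fixed, so every summand on the right is of the form $\widetilde{C}_{2(m+k)-1,2l+1}(y)$. Collecting these yields
\[
\left({\begin{array}{*{20}{c}}
{\widetilde{C}_{2(m+k)-1}^{(0)}(y)}\\
{\widetilde{C}_{2(m+k)-3}^{(2)}(y)}\\
\vdots\\
{\widetilde{C}_{2m-1}^{(2k)}(y)}
\end{array}}\right)
={\bf \widetilde{B}}_{k+1}
\left({\begin{array}{*{20}{c}}
{\widetilde{C}_{2(m+k)-1,1}(y)}\\
{\widetilde{C}_{2(m+k)-1,3}(y)}\\
\vdots\\
{\widetilde{C}_{2(m+k)-1,2k+1}(y)}
\end{array}}\right).
\]
Finally I would invert using the explicit lower-triangular inverse ${\bf \widetilde{B}}_{k+1}^{-1}$ recorded in \eqref{defn-matrx-one} and read off the last row, setting $\widetilde{b}_j:=\widetilde{b}_{k,j}$ to recover the claimed formula. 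I do not expect any genuine obstacle beyond the sign bookkeeping; the only point needing a word of justification is the invertibility of ${\bf \widetilde{B}}_{k+1}$, which is immediate because it is lower triangular with nonzero diagonal entries $\widetilde{B}_{j,j}=(-1)^j(2j)!$.
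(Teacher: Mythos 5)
Your proposal is correct and follows essentially the same route as the paper's own proof: differentiate term-by-term via \eqref{equ-cosh-single-one} to get $\big(\widetilde{C}_{2m-1}(y)\big)^{(2k)}=\sum_{l=0}^k \widetilde{B}_{k,l}\widetilde{C}_{2m+2k-1,2l+1}(y)$, assemble the relations for $j=0,\dots,k$ into a lower-triangular system governed by ${\bf \widetilde{B}}_{k+1}$, invert, and read off the last row. Your added remarks on the chain-rule factor $\tn^{2k}$ and on invertibility (nonzero diagonal entries $\widetilde{B}_{j,j}=(-1)^j(2j)!$) are details the paper leaves implicit, but they do not change the argument.
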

\begin{proof}
From \eqref{equ-cosh-single-one}, one obtains
\begin{align*}
\frac{d^{2k}}{dy^{2k}}  \left(\sum_{n=1}^\infty\frac{(-1)^{n-1}\tn^{2m-1}}{\cosh(\tn y)}\right)=\sum_{l=0}^k \widetilde{B}_{k,l}\sum_{n=1}^\infty\frac{(-1)^{n-1}\tn^{2m+2k-1}}{\cosh^{2l+1}(\tn y)}.
\end{align*}
Thus,
\begin{align*}
\left(\widetilde{C}_{2m-1}(y)\right)^{(2k)}:=\frac{d^{2k}}{dy^{2k}}\widetilde{C}_{2m-1}(y)=\sum_{l=0}^k \widetilde{B}_{k,l}\widetilde{C}_{2m+2k-1,2l+1}(y).
\end{align*}
By an elementary calculation, we have
\[\left( {\begin{array}{*{20}{c}}
{\widetilde{C}_{2(m + k) - 1}^{\left( 0 \right)}\left( y \right)}\\
{\widetilde{C}_{2(m + k) - 3}^{\left( 2 \right)}\left( y \right)}\\
{\widetilde{C}_{2(m + k) - 5}^{\left( 4 \right)}\left( y \right)}\\
 \vdots \\
{\widetilde{C}_{2m - 1}^{\left( {2k} \right)}\left( y \right)}
\end{array}} \right) = {\bf \widetilde{B}}_{k+1}\left( {\begin{array}{*{20}{c}}
{{\widetilde{C}_{2(m + k) - 1,1}}\left( y \right)}\\
{{\widetilde{C}_{2(m + k) - 1,3}}\left( y \right)}\\
{{\widetilde{C}_{2(m + k) - 1,5}}\left( y \right)}\\
 \vdots \\
{{\widetilde{C}_{2(m + k) - 1,2k + 1}}\left( y \right)}
\end{array}} \right).\]
Hence, we obtain
\begin{align}\label{matrix-one-Cosh-funct}
\left( {\begin{array}{*{20}{c}}
{{\widetilde{C}_{2(m + k) - 1,1}}\left( y \right)}\\
{{\widetilde{C}_{2(m + k) - 1,3}}\left( y \right)}\\
{{\widetilde{C}_{2(m + k) - 1,5}}\left( y \right)}\\
 \vdots \\
{{\widetilde{C}_{2(m + k) - 1,2k + 1}}\left( y \right)}
\end{array}} \right)={\bf \widetilde{B}^{-1}}_{k+1}\left( {\begin{array}{*{20}{c}}
{\widetilde{C}_{2(m + k) - 1}^{\left( 0 \right)}\left( y \right)}\\
{\widetilde{C}_{2(m + k) - 3}^{\left( 2 \right)}\left( y \right)}\\
{\widetilde{C}_{2(m + k) - 5}^{\left( 4 \right)}\left( y \right)}\\
 \vdots \\
{\widetilde{C}_{2m - 1}^{\left( {2k} \right)}\left( y \right)}
\end{array}} \right).
\end{align}
Finally, setting $\widetilde{b}_j:=\widetilde{b}_{k,j}$, and applying \eqref{defn-matrx-one} and \eqref{matrix-one-Cosh-funct} gives the desired result.
\end{proof}

\begin{thm}\label{thm-noalter-Sinh-one}
For any integers $m\geq 1$ and $k\geq 0$, we have
\begin{align}
S_{2(m+k),2k+2}(y)=\sum_{j=0}^{k}d_j\frac{d^{2j}}{dy^{2j}}\Big(S_{2(m+k-j),2}(y)\Big) ,
\end{align}
where $d_j\in \Q$.
\end{thm}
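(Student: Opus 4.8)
The plan is to follow verbatim the matrix-inversion strategy used in the proofs of Theorems~\ref{thm-alter-Sinh-one} and~\ref{thm-alter-Cosh-one}, but with the second-order differentiation identity \eqref{equ-diff-sinh-fun-two} replacing the first-order identity \eqref{equ-sinh-single-one}, and with the matrix ${\bf D}_{k+1}$ replacing ${\bf B}_{k+1}$.

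First I would differentiate the series $S_{2m,2}(y)=\sum_{n=1}^\infty n^{2m}/\sinh^2(ny)$ termwise in $y$. Because $\frac{d}{dy}\sinh(ny)=n\cosh(ny)$, each $y$-derivative of $1/\sinh^2(ny)$ produces a factor $n$, so evaluating \eqref{equ-diff-sinh-fun-two} at $x=ny$ gives
\begin{align*}
\frac{d^{2k}}{dy^{2k}}\frac{1}{\sinh^2(ny)}=n^{2k}\sum_{l=0}^{k}\frac{D_{k,l}}{\sinh^{2l+2}(ny)},
\end{align*}
where $D_{k,l}=(2k+1)!A_{2k+2,2l+2}$. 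Since $n^{p}/\sinh^{2}(ny)=O(n^{p}e^{-2ny})$ uniformly on compact subsets of $(0,\infty)$, the series and all its derived series converge uniformly there, so termwise differentiation is legitimate; summing over $n$ then yields the fundamental recurrence
\begin{align*}
\bigl(S_{2m,2}(y)\bigr)^{(2k)}=\sum_{l=0}^{k}D_{k,l}\,S_{2m+2k,2l+2}(y).
\end{align*}

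Next I would fix the total weight by setting $m'+k'=m+k$: taking $k'=j$ and $m'=m+k-j$ for $j=0,1,\dots,k$ in the recurrence above produces the lower-triangular linear system
\begin{align*}
\left( {\begin{array}{*{20}{c}}
{S_{2(m+k),2}^{(0)}(y)}\\
{S_{2(m+k-1),2}^{(2)}(y)}\\
 \vdots \\
{S_{2m,2}^{(2k)}(y)}
\end{array}} \right)={\bf D}_{k+1}\left( {\begin{array}{*{20}{c}}
{S_{2(m+k),2}(y)}\\
{S_{2(m+k),4}(y)}\\
 \vdots \\
{S_{2(m+k),2k+2}(y)}
\end{array}} \right).
\end{align*}
Since ${\bf D}_{k+1}$ is lower triangular with nonzero diagonal entries $D_{j,j}=(2j+1)!$ (coming from $A_{2j+2,2j+2}=1$), it is invertible; multiplying by ${\bf D}_{k+1}^{-1}$ from \eqref{defn-matrx-two} and reading off the bottom row gives precisely the asserted identity with $d_j:=d_{k,j}\in\Q$.

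There is no genuine obstacle here, as the argument is structurally identical to the two preceding theorems. The only points requiring care are the reindexing that converts the single derivative relation into a triangular system at the fixed weight $2(m+k)$, and the justification of termwise differentiation, both of which are routine given the exponential decay of the summands.
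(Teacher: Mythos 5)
Your proposal is correct and is essentially identical to the paper's own proof: the same termwise application of \eqref{equ-diff-sinh-fun-two} to obtain $\bigl(S_{2m,2}(y)\bigr)^{(2k)}=\sum_{l=0}^{k}D_{k,l}S_{2m+2k,2l+2}(y)$, the same reindexing into a lower-triangular system with matrix ${\bf D}_{k+1}$, and the same inversion reading off $d_j=d_{k,j}$. The only additions are the explicit justification of termwise differentiation and the observation that the diagonal entries $D_{j,j}=(2j+1)!$ are nonzero, both of which the paper leaves implicit.
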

\begin{proof}
Using \eqref{equ-diff-sinh-fun-two}, by a direct calculation, one obtains
\begin{align*}
\frac{d^{2k}}{dy^{2k}}  \left(\sum_{n=1}^\infty\frac{n^{2m}}{\sinh^2(ny)}\right)=\sum_{l=0}^k D_{k,l}\sum_{n=1}^\infty\frac{n^{2m+2k}}{\sinh^{2l+2}(ny)}.
\end{align*}
Thus,
\begin{align*}
\left(S_{2m,2}(y)\right)^{(2k)}:=\frac{d^{2k}}{dy^{2k}}S_{2m,2}(y)=\sum_{l=0}^k D_{k,l}S_{2m+2k,2l+2}(y).
\end{align*}
Hence, we have
\[\left( {\begin{array}{*{20}{c}}
{S_{2(m + k),2}^{\left( 0 \right)}\left( y \right)}\\
{S_{2(m + k - 1),2}^{\left( 2 \right)}\left( y \right)}\\
{S_{2(m + k - 2),2}^{\left( 4 \right)}\left( y \right)}\\
 \vdots \\
{S_{2m,2}^{\left( {2k} \right)}\left( y \right)}
\end{array}} \right) = {\bf D}_{k+1}\left( {\begin{array}{*{20}{c}}
{{S_{2(m + k),2}}\left( y \right)}\\
{{S_{2(m + k),4}}\left( y \right)}\\
{{S_{2(m + k),6}}\left( y \right)}\\
 \vdots \\
{{S_{2(m + k),2k + 2}}\left( y \right)}
\end{array}} \right).\]
Then,
\begin{align}\label{matrix-two-Sinh-funct}
\left( {\begin{array}{*{20}{c}}
{{S_{2(m + k),2}}\left( y \right)}\\
{{S_{2(m + k),4}}\left( y \right)}\\
{{S_{2(m + k),6}}\left( y \right)}\\
 \vdots \\
{{S_{2(m + k),2k + 2}}\left( y \right)}
\end{array}} \right)={\bf D}^{-1}_{k+1}\left( {\begin{array}{*{20}{c}}
{S_{2(m + k),2}^{\left( 0 \right)}\left( y \right)}\\
{S_{2(m + k - 1),2}^{\left( 2 \right)}\left( y \right)}\\
{S_{2(m + k - 2),2}^{\left( 4 \right)}\left( y \right)}\\
 \vdots \\
{S_{2m,2}^{\left( {2k} \right)}\left( y \right)}
\end{array}} \right).
\end{align}
Finally, setting $d_j:=d_{k,j}$, and applying \eqref{defn-matrx-two} and \eqref{matrix-two-Sinh-funct} gives the desired result.
\end{proof}

\begin{thm}\label{thm-noalter-Cosh-one}
For any integers $m\geq 1$ and $k\geq 0$, we have
\begin{align}
C'_{2(m+k),2k+2}(y)=\sum_{j=0}^{k}\widetilde{d}_j\frac{d^{2j}}{dy^{2j}}\Big(C'_{2(m+k-j),2}(y)\Big),
\end{align}
where $\widetilde{d}_j\in \Q$.
\end{thm}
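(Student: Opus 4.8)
The plan is to follow verbatim the matrix-inversion strategy used in the proofs of Theorems~\ref{thm-alter-Sinh-one}--\ref{thm-noalter-Sinh-one}, now invoking the cosine differentiation formula \eqref{equ-diff-cosh-fun-two} in place of its sine counterpart \eqref{equ-diff-sinh-fun-two}.

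First I would differentiate $C'_{2m,2}(y)=\sum_{n=1}^\infty \tn^{2m}/\cosh^2(\tn y)$ term by term $2k$ times in $y$. Since the factor $\tn^{2m}$ does not depend on $y$, the chain rule (with $x=\tn y$, so that $d/dy=\tn\,d/dx$) contributes a factor $\tn^{2k}$ to each term; combined with \eqref{equ-diff-cosh-fun-two} and the abbreviation $\widetilde{D}_{k,l}=(-1)^l(2k+1)!A_{2k+2,2l+2}$, this yields
\begin{align*}
\left(C'_{2m,2}(y)\right)^{(2k)}:=\frac{d^{2k}}{dy^{2k}}C'_{2m,2}(y)=\sum_{l=0}^k \widetilde{D}_{k,l}\,C'_{2m+2k,2l+2}(y).
\end{align*}
The crucial point is that the chain-rule factor $\tn^{2k}$ exactly raises the numerator exponent from $2m$ to $2m+2k$, so that every term on the right carries the common upper index $2(m+k)$.

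Next I would specialize this identity at $(m,k)\mapsto(m+k-j,j)$ for $j=0,1,\dots,k$, obtaining a lower-triangular linear system with coefficient matrix ${\bf \widetilde{D}}_{k+1}$ that expresses the column vector with entries $\left(C'_{2(m+k-j),2}(y)\right)^{(2j)}$ in terms of the column vector with entries $C'_{2(m+k),2l+2}(y)$, $l=0,\dots,k$. Inverting ${\bf \widetilde{D}}_{k+1}$ (which is invertible because it is lower triangular with nonzero diagonal entries $\widetilde{D}_{k,k}=(-1)^k(2k+1)!\neq 0$) and reading off the bottom row, with $\widetilde{d}_j:=\widetilde{d}_{k,j}$ the entries of ${\bf \widetilde{D}}_{k+1}^{-1}$ from \eqref{defn-matrx-two}, produces $C'_{2(m+k),2k+2}(y)$ as the asserted $\Q$-linear combination of the even $y$-derivatives of $C'_{2(m+k-j),2}(y)$.

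There is essentially no obstacle beyond verifying that the triangular structure and the nonvanishing diagonal carry over unchanged from the sine case; the one content-bearing step is the chain-rule bookkeeping in the first paragraph, which is exactly what guarantees the uniform upper subscript $2(m+k)$ needed to form a square, invertible system.
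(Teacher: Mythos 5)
Your proposal is correct and follows essentially the same route as the paper's proof: term-by-term differentiation via \eqref{equ-diff-cosh-fun-two} to obtain $\left(C'_{2m,2}(y)\right)^{(2k)}=\sum_{l=0}^k \widetilde{D}_{k,l}\,C'_{2m+2k,2l+2}(y)$, followed by inversion of the lower-triangular system with coefficient matrix ${\bf \widetilde{D}}_{k+1}$ and reading off the last row to set $\widetilde{d}_j:=\widetilde{d}_{k,j}$. Your added justifications --- the chain-rule factor $\tn^{2k}$ that makes the upper index uniformly equal to $2(m+k)$, and the invertibility coming from the nonzero diagonal entries $\widetilde{D}_{k,k}=(-1)^k(2k+1)!$ --- are details the paper leaves implicit.
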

\begin{proof}
From \eqref{equ-diff-cosh-fun-two}, we have
\begin{align*}
\frac{d^{2k}}{dy^{2k}}  \left(\sum_{n=1}^\infty\frac{\tn^{2m}}{\cosh^2(\tn y)}\right)=\sum_{l=0}^k \widetilde{D}_{k,l}\sum_{n=1}^\infty\frac{\widetilde{n}^{2m+2k}}{\cosh^{2l+2}(\widetilde{n}y)}.
\end{align*}
Thus,
\begin{align*}
\left(C'_{2m,2}(y)\right)^{(2k)}:=\frac{d^{2k}}{dy^{2k}}C'_{2m,2}(y)=\sum_{l=0}^k \widetilde{D}_{k,l}C'_{2m+2k,2l+2}(y).
\end{align*}
Hence, we have
\[\left( {\begin{array}{*{20}{c}}
{{C'}_{2(m + k),2}^{\left( 0 \right)}\left( y \right)}\\
{{C'}_{2(m + k - 1),2}^{\left( 2 \right)}\left( y \right)}\\
{{C'}_{2(m + k - 2),2}^{\left( 4 \right)}\left( y \right)}\\
 \vdots \\
{{C'}_{2m,2}^{\left( {2k} \right)}\left( y \right)}
\end{array}} \right) = {\bf \widetilde{D}}_{k+1}\left( {\begin{array}{*{20}{c}}
{{C'_{2(m + k),2}}\left( y \right)}\\
{{C'_{2(m + k),4}}\left( y \right)}\\
{{C'_{2(m + k),6}}\left( y \right)}\\
 \vdots \\
{{C'_{2(m + k),2k + 2}}\left( y \right)}
\end{array}} \right).\]
Then,
\begin{align}\label{matrix-two-Cosh-funct}
\left( {\begin{array}{*{20}{c}}
{{C'_{2(m + k),2}}\left( y \right)}\\
{{C'_{2(m + k),4}}\left( y \right)}\\
{{C'_{2(m + k),6}}\left( y \right)}\\
 \vdots \\
{{C'_{2(m + k),2k + 2}}\left( y \right)}
\end{array}} \right)={\bf \widetilde{D}}^{-1}_{k+1}\left( {\begin{array}{*{20}{c}}
{{C'}_{2(m + k),2}^{\left( 0 \right)}\left( y \right)}\\
{{C'}_{2(m + k - 1),2}^{\left( 2 \right)}\left( y \right)}\\
{{C'}_{2(m + k - 2),2}^{\left( 4 \right)}\left( y \right)}\\
 \vdots \\
{{C'}_{2m,2}^{\left( {2k} \right)}\left( y \right)}
\end{array}} \right).
\end{align}
Finally, setting $\widetilde{d}_j:=\widetilde{d}_{k,j}$, and applying \eqref{defn-matrx-two} and \eqref{matrix-two-Cosh-funct} gives the desired result.
\end{proof}

\section{General Berndt-Type Integrals}
In this section, we will prove our main structural theorems on the general Berndt-type integrals \eqref{BTI-definition-1} with the denominator having arbitrary positive degrees.

\begin{lem}\label{lem:kthDerzn}
Let $k$ be a nonnegative integer. For any index $\bfi=(i_0,\dots,i_k)$ we write
\begin{equation*}
|\bfi|=\sum_{0\le j\le k}  i_j, \quad
\text{and} \quad
|\bfi_2|=\sum_{0\le j\le [k/2]} i_{2j}.
\end{equation*}
Put $v=\sqrt{x(1-x)}$, $v'=dv/dx$, and $z^{(j)}=d^jz/dx^j$.
Then for any rational function $g(v)\in\Q[v]$ we have
\begin{align}\label{equ:kthDerzn}
\frac{d^k}{dy^k} \Big(z^n g(v)\Big) \in  &\,
\left\{
  \begin{array}{ll}
      &\, z^{n+k} \sum_{\substack{i_0,\dots,i_k\geq 0\\ |\bfi|=k,\  |\bfi_2| \text{ even}}}
     \prod_{j=0}^k \big( z^{(j)} \big)^{i_j} \cdot  \Q\big[v,v^{-1}\big], \\
      &\, +z^{n+k} \sum_{\substack{i_0,\dots,i_k\geq 0\\ |\bfi|=k,\   |\bfi_2| \text{ odd}}}
     \prod_{j=0}^k \big( z^{(j)} \big)^{i_j}  \cdot \Q\big[v,v^{-1}\big]v';
  \end{array}
\right.
\\
\frac{d^k}{dy^k} \Big(z^n g(v)v'\Big) \in  &\,
\left\{
  \begin{array}{ll}
      &\, z^{n+k} \sum_{\substack{i_0,\dots,i_k\geq 0\\ |\bfi|=k,\  |\bfi_2| \text{ even}}}
     \prod_{j=0}^k \big( z^{(j)} \big)^{i_j} \cdot  \Q\big[v,v^{-1}\big]v', \\
      &\, +z^{n+k} \sum_{\substack{i_0,\dots,i_k\geq 0\\ |\bfi|=k,\   |\bfi_2| \text{ odd}}}
     \prod_{j=0}^k \big( z^{(j)} \big)^{i_j}  \cdot \Q\big[v,v^{-1}\big];\\
  \end{array}
\right. \label{equ:kthDerznDotu}
\\
     \frac{d^k}{dy^k} \Big(z^{n+1} z' g(v)\Big) \in  &\,
\left\{
  \begin{array}{ll}
      &\, z^{n+k} \sum_{\substack{i_0,\dots,i_{k+1}\geq 0\\ |\bfi|=k+1,\  |\bfi_2| \text{ odd}}}
     \prod_{j=0}^{k+1} \big( z^{(j)} \big)^{i_j} \cdot \Q\big[v,v^{-1}\big]v', \\
      &\, +z^{n+k} \sum_{\substack{i_0,\dots,i_{k+1}\geq 0\\ |\bfi|=k+1,\   |\bfi_2| \text{ even}}}
     \prod_{j=0}^{k+1} \big( z^{(j)} \big)^{i_j} \cdot \Q\big[v,v^{-1}\big];
  \end{array}
\right.  \label{equ:kthDerznz'}
\\
     \frac{d^k}{dy^k} \Big(z^{n+1} z' g(v)v'\Big) \in  &\,
\left\{
  \begin{array}{ll}
      &\, z^{n+k} \sum_{\substack{i_0,\dots,i_{k+1}\geq 0\\ |\bfi|=k+1,\  |\bfi_2| \text{ odd}}}
     \prod_{j=0}^{k+1} \big( z^{(j)} \big)^{i_j} \cdot \Q\big[v,v^{-1}\big], \\
      &\, +z^{n+k} \sum_{\substack{i_0,\dots,i_{k+1}\geq 0\\ |\bfi|=k+1,\   |\bfi_2| \text{ even}}}
     \prod_{j=0}^{k+1} \big( z^{(j)} \big)^{i_j} \cdot \Q\big[v,v^{-1}\big]v'.
  \end{array}
\right.  \label{equ:kthDerznz'Dotu}
\end{align}
\end{lem}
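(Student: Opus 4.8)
The plan is to prove the four displays \eqref{equ:kthDerzn}--\eqref{equ:kthDerznz'Dotu} simultaneously by a single induction on $k$, driven by the classical change-of-variable identity
\[
\frac{d}{dy}=-v^2z^2\,\frac{d}{dx},
\]
which comes from $dx/dy=-x(1-x)z^2=-v^2z^2$ in Ramanujan's theory of the nome $q=e^{-y}$ (see \cite{B1991}). Thus one step of $d/dy$ means: multiply by $-v^2z^2$, then differentiate in $x$ by the product rule. Since $v^2=x(1-x)$ and the factor $z^2$ only feed back harmless powers, the whole problem reduces to understanding how $d/dx$ moves between the two ingredients appearing on the right-hand sides, namely the derivative-powers $z^{(j)}$ and the $v$-part.

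First I would record why the rings $\Q[v,v^{-1}]$ and $\Q[v,v^{-1}]v'$ are stable under $d/dx$. From $v=\sqrt{x(1-x)}$ one computes $(v')^2=(1-4v^2)/(4v^2)\in\Q[v,v^{-1}]$ and $v''=-1/(4v^3)\in\Q[v,v^{-1}]$. Hence $(v^a)'=av^{a-1}v'$ shows that $d/dx$ carries $\Q[v,v^{-1}]$ into $\Q[v,v^{-1}]v'$, while $(fv')'=f'(v')^2+fv''$ shows that it carries $\Q[v,v^{-1}]v'$ back into $\Q[v,v^{-1}]$; in short, $d/dx$ flips the \emph{$v'$-parity}. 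On the $z$-side, $(z^{(j)})'=z^{(j+1)}$ merely raises the order of a derivative factor by one.

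The heart of the matter is one parity invariant. To a monomial $z^{N}\prod_j(z^{(j)})^{i_j}\,w$, with $w$ in $\Q[v,v^{-1}]$ or in $\Q[v,v^{-1}]v'$, I attach
\[
I:=|\bfi_2|+(v'\text{-power of }w)\pmod{2},
\]
where $|\bfi_2|=\sum_j i_{2j}$ counts only the \emph{even-order} derivative factors. I would then show that each $d/dy$ preserves $I$, raises $|\bfi|$ by exactly one, and bumps the $z$-prefactor up by one power. Expanding $d/dy\,(z^{N}Mw)=-v^2z^2\,(z^{N}Mw)'$ by the product rule produces three pieces. Differentiating the prefactor creates a factor $z'=z^{(1)}$ of odd order, so $|\bfi_2|$, and hence $I$, is untouched. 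Differentiating $M$ turns some $z^{(j)}$ into $z^{(j+1)}$, flipping the parity of $|\bfi_2|$, while the ambient $z^2$ supplies one extra $z^{(0)}$, flipping it a second time, for no net change. Differentiating $w$ supplies an extra $z^{(0)}$ (one flip of $|\bfi_2|$) and simultaneously flips the $v'$-parity (a second flip), again leaving $I$ fixed. The subtle and genuinely load-bearing point, and the reason the statement is phrased through $|\bfi_2|$ rather than the full $|\bfi|$, is precisely this double cancellation: only the even-order count meshes with the two sources of fresh $z^{(0)}$'s (the factor $z^2$ and the prefactor) so as to keep $I$ rigid. This is the step I expect to require the most care.

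Finally the four displays differ only in their $k=0$ seeds. The seeds $z^ng(v)$, $z^ng(v)v'$, $z^{n+1}z'g(v)$, $z^{n+1}z'g(v)v'$ have $(|\bfi|,I)$ equal to $(0,0)$, $(0,1)$, $(1,0)$, $(1,1)$; propagating through $k$ steps yields $|\bfi|=k,k,k+1,k+1$ and $I=0,1,0,1$, which is exactly the recorded pairing of the parity of $|\bfi_2|$ with the presence or absence of $v'$. Besides the parity cancellation, the only remaining hazard is bookkeeping: keeping the split of the total $z$-weight (base-weight plus $2k$) between the explicit prefactor and the product $\prod_j(z^{(j)})^{i_j}$ consistent from one step to the next. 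Once that accounting is pinned down, the parity computation above closes the induction and delivers all four statements at once.
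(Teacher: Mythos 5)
Your proposal is correct and is essentially the paper's own proof: both arguments run an induction on $k$ powered by $dx/dy=-v^2z^2$, splitting each application of $d/dy$ according to whether it lands on the $z$-block (which preserves the parity of $|\bfi_2|$ and the $v'$-type, since the new factor is either $z^{(1)}$ or a $z^{(0)}$ paired with a shift $z^{(j)}\to z^{(j+1)}$) or on the $v$-part (which flips both, by $(v')^2=(1-4v^2)/(4v^2)$ and $v''=-1/(4v^3)$); your invariant $I$ is exactly the paper's observation that case (i) maps each of the two sums to itself while case (ii) swaps them, with your simultaneous treatment of the four displays replacing the paper's ``the proof of the others is completely similar.'' The one point where you deviate from the printed statement is actually in your favor: your weight bookkeeping produces the prefactor $z^{n+k+1}$ in \eqref{equ:kthDerznz'} and \eqref{equ:kthDerznz'Dotu}, and that is the correct exponent --- the printed $z^{n+k}$ is an off-by-one slip, as one sees from the total $z$-weight (already at $k=0$ the claim with $z^{n+k}$ fails, since $z^{n+1}z'g(v)\notin z^{n+1}\Q[v,v^{-1}]v'+z^nz'\Q[v,v^{-1}]$), and it is the $z^{n+k+1}$ version that the proof of Corollary \ref{cor:kthDerzn} actually relies on.
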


\begin{proof}
For \eqref{equ:kthDerzn}, we proceed by induction on $k$. If $k=0$ then the claim is trivial.
Suppose \eqref{equ:kthDerzn} holds for $k\geq 0$. Let's denote the two sums on the right-hand side
of \eqref{equ:kthDerzn} by $\text{I}_k$ and $\text{II}_k$.
Observe that (see \cite[P. 120, Entry. 9(i)]{B1991})
\begin{equation}\label{equ:dx/dy}
\frac{dx}{dy}=-x(1-x)z^2=-v^2 z^2.
\end{equation}
When $d/dy$ is applied to the one of the two factors separated by the center dots on the right-hand side of \eqref{equ:kthDerzn}
we have the following two cases to consider by the product rule:
\begin{enumerate}
  \item [\upshape{(i)}] $d/dy$ is applied to the $z$-block. First,
\begin{align*}
\frac{d}{dy} \big( z^{n+k+i_0} \big) =-(n+k+i_0)v z^{n+k+1+i_0}.
\end{align*}
Thus $|\bfi|$ and $|\bfi_2|$ are invariant in the expression of $\frac{d^{k+1}}{dy^{k+1}} \Big(z^nf(x)g(v)\Big)$.

Suppose $j\geq 1$. Since
\begin{align*}
\frac{d}{dx} \big( z^{(j)} \big)^{i_j}=i_j \big( z^{(j)} \big)^{i_j-1}  z^{(j+1)}
\end{align*}
we see that if $j$ is odd then both $i_0$ and $i_{j+1}$ is increased by 1 in the expression
of $\frac{d^{k+1}}{dy^{k+1}} \Big(z^n g(v)\Big)$. If $j$ is even then $i_0$ and $i_{j+1}$ is increased by 1
while $i_{j}$ is decreased by 1. In both cases the parity of $|\bfi_2|$ is invariant while $|\bfi|$ is increased by 1.

To summarize, in case (i) we always have $\text{I}_k \to \text{I}_{k+1}$ and $\text{II}_k \to \text{II}_{k+1}$.

  \item [\upshape{(ii)}] $d/dy$ is applied to the $(v,u)$-block. Suppose $h(v)\in \Q\big[v,v^{-1}\big]$. Since
\begin{align*}
&(v')^2=\left(\frac{1-2x}{2v}\right)^2=\frac{1-4v}{4v^2},\\
&v''(x)=\frac{d}{dx}\left(\frac{1-2x}{2v}\right)
=-\frac{2v+(1-2x)v'}{2v^2}=-\frac{4v^2-4v+1}{4v^3},
\end{align*}
we get
\begin{align*}
 \frac{d}{dy}\big( h(v) v')=&\, -v^2 z^2 \Big(\frac{dh}{dv} (v')^2+h(v) v'' \Big) \in  z^2 \Q\big[v,v^{-1}\big]\\
 \frac{d}{dy}\big( h(v))=&\, -v^2 z^2 \frac{dh}{dv} v'\in z^2 \Q\big[v,v^{-1}\big]v'.
\end{align*}
Therefore, in case (ii) we have $\text{I}_k \to \text{II}_{k+1}$ and $\text{II}_k \to \text{I}_{k+1}$.
\end{enumerate}

Combining (i) and (ii) we see that \eqref{equ:kthDerzn} holds by induction.
The proof of \eqref{equ:kthDerznDotu}--\eqref{equ:kthDerznz'Dotu} is completely similar and thus is left to the interested reader.
\end{proof}

\begin{cor}\label{cor:kthDerzn}
Let $\ss\in\N$. Then for any polynomial $g(v)\in \Q\big[v,v^{-1}\big]$
\begin{align}\label{equ:kthDerzn-at1/2}
    \frac{d^k}{dy^k} \Big(z^{2\ss} g(v) \Big)\Big|_{x=1/2}\in &\, \sum_{\gl=0}^{[k/2]}\sum_{j=k}^{3k}  \frac{\Gamma^{4\ss+8\gl}(1/4) }{\pi^{3\ss+j}}\Q,\\
 \frac{d^k}{dy^k} \Big( z^{2\ss} g(v)v'\Big)\Big|_{x=1/2}\in &\, \sum_{\gl=0}^{[(k-1)/2]}\sum_{j=k}^{3k}  \frac{\Gamma^{4\ss+8\gl+4}(1/4) }{\pi^{3\ss+j}}\Q, \label{equ:kthDerznDotu-at1/2}\\
     \frac{d^k}{dy^k} \Big(z^{2\ss+1}z' g(v)\Big)\Big|_{x=1/2}\in &\, \sum_{\gl=0}^{[k/2]}\sum_{j=k+1}^{3k+3}  \frac{\Gamma^{4\ss+8\gl}(1/4) }{\pi^{3\ss+j}}\Q, \label{equ:kthDerznz'-at1/2} \\
     \frac{d^k}{dy^k} \Big(z^{2\ss+1}z' g(v)v'\Big)\Big|_{x=1/2}\in &\, \sum_{\gl=0}^{[(k-1)/2]}\sum_{j=k+1}^{3k+3}  \frac{\Gamma^{4\ss+8\gl+4}(1/4) }{\pi^{3\ss+j}}\Q. \label{equ:kthDerznz'Dotu-at1/2}
\end{align}

\end{cor}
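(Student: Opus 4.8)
The plan is to read all four containments directly off Lemma~\ref{lem:kthDerzn}, by first applying \eqref{equ:kthDerzn}--\eqref{equ:kthDerznz'Dotu} to do the differentiation, and only then specialising to $x=1/2$, where three elementary facts collapse the right-hand sides to the advertised monomials. First, since $v=\sqrt{x(1-x)}$ gives $v(1/2)=1/2$, every Laurent polynomial in $\Q[v,v^{-1}]$ evaluates to a rational number and contributes nothing to the $\Gamma$- or $\pi$-weight. Second, and decisively, $v'=(1-2x)/(2v)$ vanishes at $x=1/2$. Third, the values recorded just after \eqref{equ:znDer} say that each factor $z^{(j)}$ of even order lies in $\Q\,\Gamma^2(1/4)\pi^{-3/2}$ and each of odd order lies in $\Q\,\sqrt{\pi}\,\Gamma^{-2}(1/4)$ (with $z=z^{(0)}$ counted as even order).

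First I would substitute the appropriate line of Lemma~\ref{lem:kthDerzn} and invoke $v'(1/2)=0$. In each of \eqref{equ:kthDerzn}--\eqref{equ:kthDerznz'Dotu} the two sums on the right differ precisely by one factor of $v'$, so exactly one survives at $x=1/2$: for \eqref{equ:kthDerzn} and \eqref{equ:kthDerznz'} it is the $v'$-free sum, in which $|\bfi_2|$ is even, whereas for \eqref{equ:kthDerznDotu} and \eqref{equ:kthDerznz'Dotu} it is the other sum, in which $|\bfi_2|$ is odd. Thus $v'(1/2)=0$ \emph{fixes the parity of $|\bfi_2|$} in every surviving term, and this single structural input is what forces the $\Gamma$-exponents into one residue class modulo $8$.

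The remainder is a weight count. For a surviving monomial, collect the prefactor power of $z$ together with all the factors $(z^{(j)})^{i_j}$, and let $E$ and $O$ be the total numbers of even-order and odd-order derivative factors. By the two evaluation formulas the monomial lies in $\Q\,\Gamma^{2(E-O)}(1/4)\,\pi^{-(3E-O)/2}$. Because each application of $d/dy=-v^2z^2\,d/dx$ (see \eqref{equ:dx/dy}) raises the total count $E+O$ by exactly two, $E+O$ is always even; hence $3E-O$ is even and the power of $\pi$ is a genuine integer. A direct count of even versus odd factors gives $E-O=2\ss+2|\bfi_2|$, so the $\Gamma$-exponent equals $4\ss+4|\bfi_2|$; the fixed parity of $|\bfi_2|$ then turns this into $4\ss+8\gl$ in the $v'$-free cases and $4\ss+8\gl+4$ in the others, with $\gl=\lfloor|\bfi_2|/2\rfloor$. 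The simultaneous value $-(3E-O)/2$ supplies the matching power of $\pi$, and the constraints $0\le|\bfi_2|\le|\bfi|$ translate into the stated ranges of $\gl$ and $j$.

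I expect the only genuine work to be this final bookkeeping: one must correlate the parity \emph{and} the size of $|\bfi_2|$ with both exponents at once and check that every resulting pair lands in the claimed box. A helpful observation is that along the surviving family the $\pi$-exponent is an affine function of the $\Gamma$-exponent, namely $-(3E-O)/2=-(E+O)/2-(E-O)$ with $E+O$ fixed, so the monomials actually form a one-parameter family indexed by $|\bfi_2|$; writing the conclusion as the product $\sum_\gl\sum_j$ merely over-approximates this set, which is harmless for a containment statement. The one point that demands care is how the extreme admissible values of $|\bfi_2|$ produce the endpoints $\lfloor k/2\rfloor$ and $\lfloor(k-1)/2\rfloor$ as the parity of $k$ varies.
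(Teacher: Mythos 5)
Your strategy is the same as the paper's own proof: specialize Lemma~\ref{lem:kthDerzn} at $x=1/2$, use $v(1/2)=1/2$ and $v'(1/2)=0$ to discard one of the two sums in each formula, evaluate the surviving monomials via \eqref{equ:znDer}, and count exponents. The one difference is cosmetic bookkeeping: you weight each factor $z^{(j)}(1/2)$ separately (half-integral weights, with integrality restored by a parity count), while the paper pairs each derivative factor with one copy of $z$ from the prefactor, so that $z(1/2)z^{(2j)}(1/2)\in\Q\,\Gamma^4(1/4)\pi^{-3}$ and $z(1/2)z^{(2j+1)}(1/2)\in\Q\,\pi^{-1}$ carry integral weights from the outset.

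The genuine gap is the step you postpone ("the one point that demands care"): it cannot be completed for \eqref{equ:kthDerznz'-at1/2} and \eqref{equ:kthDerznz'Dotu-at1/2}, and your own identities already flag the problem. If you take \eqref{equ:kthDerznz'}--\eqref{equ:kthDerznz'Dotu} literally, the prefactor $z^{n+k}$ together with $|\bfi|=k+1$ gives $E+O=n+2k+1$, odd for $n=2\ss$, and $E-O=2\ss-1+2|\bfi_2|$, contradicting both your claim that $E+O$ is even and your formula $E-O=2\ss+2|\bfi_2|$. Your "each $d/dy$ adds two factors" argument is the correct one, and it forces the prefactor in those two formulas to be $z^{n+k+1}$ (test $k=0$: the left side $z^{n+1}z'g(v)$ is visibly not of the form $z^{n}z'\,\Q[v,v^{-1}]$). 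After this correction your weight formulas hold uniformly, but then $|\bfi|=k+1$, so the surviving even values of $|\bfi_2|$ reach $k+1$ when $k$ is odd: $\gl$ runs up to $[(k+1)/2]$, not $[k/2]$ (similarly up to $[k/2]$, not $[(k-1)/2]$, in \eqref{equ:kthDerznz'Dotu-at1/2} when $k$ is even).

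These extra terms really occur, so the stated ranges cannot be verified. For $k=1$,
\begin{align*}
\frac{d}{dy}\Big(z^{2\ss+1}z' g(v)\Big)\Big|_{x=1/2}
&=-\Big[(2\ss+1)\,v^2z^{2\ss+2}(z')^2g(v)+v^2z^{2\ss+3}z''\,g(v)\Big]_{x=1/2}\\
&\in \Q\,\frac{\Gamma^{4\ss}(1/4)}{\pi^{3\ss+2}}+\Q\,\frac{\Gamma^{4\ss+8}(1/4)}{\pi^{3\ss+6}},
\end{align*}
and the coefficient of $\Gamma^{4\ss+8}(1/4)\,\pi^{-(3\ss+6)}$ is a nonzero rational multiple of $g(1/2)$ because $z''(1/2)\neq 0$; by the algebraic independence of $\pi$ and $\Gamma(1/4)$ (Chudnovsky), this monomial lies outside the stated box $\sum_{\gl=0}^{[1/2]}\sum_{j=2}^{6}\Gamma^{4\ss}(1/4)\pi^{-(3\ss+j)}\Q$. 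So, carried out honestly, your plan proves \eqref{equ:kthDerzn-at1/2} and \eqref{equ:kthDerznDotu-at1/2} exactly as stated, but proves the last two containments only with the enlarged $\gl$-ranges $[(k+1)/2]$ and $[k/2]$. In fairness, this defect is inherited from the paper itself: its proof works out only \eqref{equ:kthDerzn-at1/2} and declares the rest "completely similar," which fails at precisely this endpoint, and the enlarged ranges are what the computation actually gives (one can check they are still absorbed by the final degree boxes in Theorems~\ref{thm:Plus} and~\ref{thm:Minus}). Your write-up must either prove those corrected ranges or explain why the $|\bfi_2|=k+1$ terms vanish --- and they do not.
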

\begin{proof}
Note that for all $j\geq 0$
\begin{equation*}
z\Big(\frac12\Big)z^{(2j)}\Big(\frac12\Big)\in \frac{\Gamma^4(1/4)}{\pi^3} \Q,\quad
z\Big(\frac12\Big)z^{(2j+1)}\Big(\frac12\Big)\in \frac{1}{\pi} \Q
\end{equation*}
by \eqref{equ:znDer}. By \eqref{equ:kthDerzn} and noting that $v'(1/2)=0$ we see that
\begin{align*}
\frac{d^k}{dy^k} \Big(z^{2\ss}g(v) \Big)\Big|_{x=1/2} \in &\, z^{2\ss}\Big(\frac12\Big)
    \sum_{\substack{i_0,\dots,i_k\geq 0\\ |\bfi|=k,\  |\bfi_2| \text{ even}}}
     \prod_{j=0}^k \Big[ z\Big(\frac12\Big) z^{(j)}\Big(\frac12\Big) \Big]^{i_j} \Q\\
= &\,  \sum_{\substack{i_0,\dots,i_k\geq 0\\ |\bfi|=k,\  |\bfi_2| \text{ even}}}
 \frac{\Gamma^{4\ss+4(i_0+i_2+i_4+\cdots)}(1/4)}{\pi^{3\ss+3i_0+i_1+3i_2+i_3+\cdots}}\Q.
\end{align*}
Since
\begin{align*}
 0\le  i_0+i_2+i_4+\cdots &\, \le i_0+\cdots+i_k= k \le  3i_0+i_1+3i_2+i_3+\cdots \le  3(i_0+\cdots+i_k)=3k
\end{align*}
we obtain \eqref{equ:kthDerzn-at1/2} immediately. The proofs of
\eqref{equ:kthDerznDotu-at1/2}--\eqref{equ:kthDerznz'Dotu-at1/2} are completely similar and are thus
left to the interested reader.
\end{proof}

\begin{thm} \label{thm:Plus}
For all integers $m\geq 1$ and $p\geq [m/2]$, the Berndt-type integrals
\begin{align}
\int_0^\infty \frac{x^{4p+1} dx}{(\cos x+\cosh x)^{m}} \in   \Q\Big[\Gamma^4(1/4),\pi^{-1}\Big].
\end{align}
Moreover, the degrees of $\Gamma^4(1/4)$ have the same parity as $m$ and are between $2p-m+2$ and $2p+m$, inclusive, while
the degrees of $\pi^{-1}$ are between $2p-m+2$ and $2p+3m-2$, inclusive.
\end{thm}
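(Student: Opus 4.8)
The plan is to route the integral through the contour-integral identity of Theorem~\ref{main-thm-one-plus}, collapse the resulting high-depth $\cosh$-sums to a depth-one or depth-two base case, and then read off the two degrees by specializing the derivatives at $x=1/2$. First I would invoke Theorem~\ref{main-thm-one-plus} with $a=4p+1$. Here $i^{a+1}=i^{4p+2}=-1$ and $(1+i)^{a-1}=(1+i)^{4p}=((1+i)^2)^{2p}=(2i)^{2p}=(-1)^p2^{2p}$, so the left factor is $(-1)^p2^{m-1-2p}$ and
\[
\int_0^\infty \frac{x^{4p+1}\,dx}{(\cos x+\cosh x)^m}=(-1)^p2^{2p+1-m}\cdot(\text{RHS}),
\]
a rational multiple of the right-hand side. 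The inner object there is $\sum_{n\ge1}(-1)^{mn}\tn^{\,q}/\cosh^m(\tn y)$ with $q=4p+2+2l-m$, differentiated $D:=m-1-2l-j$ times and evaluated at $y=\pi$, weighted by $\pi^{4p+2-j}$. I would split by the parity of $m$: for $m$ even the sum is $C'_{q,m}(y)$ with $q$ even, while for $m$ odd it is $-\widetilde C_{q,m}(y)$ with $q$ odd.

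Next I would collapse the depth. Writing $m=2k_0+2$ (resp.\ $m=2k_0+1$), Theorem~\ref{thm-noalter-Cosh-one} gives $C'_{q,m}=\sum_{j'=0}^{k_0}\widetilde d_{j'}(d/dy)^{2j'}C'_{q-2j',2}$ and Theorem~\ref{thm-alter-Cosh-one} gives $\widetilde C_{q,m}=\sum_{j'=0}^{k_0}\widetilde b_{j'}(d/dy)^{2j'}\widetilde C_{q-2j',1}$. After this the integral is a $\Q$-combination of $\pi^{4p+2-j}$ times $(d/dy)^{k}$ of a \emph{base} sum ($C'_{q-2j',2}$ or $\widetilde C_{q-2j',1}$) at $y=\pi$, where $k=D+2j'$. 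The hypothesis $p\ge[m/2]$ enters precisely here: over $0\le l\le[(m-1)/2]$ and $0\le j'\le k_0$, the smallest reduced subscript $q-2j'$ equals $4p+4-2m$ (even $m$) or $4p+3-2m$ (odd $m$), which are $\ge2$ and $\ge1$ respectively exactly when $p\ge[m/2]$, so every base sum stays in the admissible range of Lemmas~\ref{lem-one-exform-cosh-Qxz} and \ref{lem-one-exform-cosh-Qx/2z}.

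I would then substitute those base-case closed forms. Each is a sum of terms of shape $z^n g(v)$, $z^n g(v)v'$, $z^{n+1}z'g(v)$, or $z^{n+1}z'g(v)v'$ with $g\in\Q[v,v^{-1}]$ and $v=\sqrt{x(1-x)}$, the $z$-exponent being essentially $q-2j'$. Applying the remaining $k$ derivatives and specializing at $x=1/2$ is exactly Corollary~\ref{cor:kthDerzn}; the crucial input is $v'(1/2)=0$, which kills every surviving $v'$-factor and enforces the parity bookkeeping of Lemma~\ref{lem:kthDerzn}. Each term then lands in $\Q\,\Gamma^{4s+8\lambda}(1/4)\,\pi^{-(3s+j_2)}$ (or its $+4$-shifted analogue), so the whole integral lies in $\Q[\Gamma^4(1/4),\pi^{-1}]$, as claimed.

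The hard part is the degree bookkeeping, and I expect it to be the main obstacle. Combining the $\pi^{4p+2-j}$ prefactor with Corollary~\ref{cor:kthDerzn} and writing $4s=q-2j'$ up to a fixed small shift, the $\Gamma^4(1/4)$-degree of a generic term is $2s+2\lambda$ (even, with an extra $+1$ or $+2$ preserving the parity of $m$) and the $\pi^{-1}$-degree is $(3s+j_2)-(4p+2-j)$. One must extremize these linear forms over five coupled indices whose ranges interlock, e.g.\ $\lambda\le[(k-1)/2]$ with $k=m-1-2l-j+2j'$ and $j_2\in[k,3k]$. A direct computation shows both minima equal $2p-m+2$, attained from the $z^{n+1}z'$-type base term at $l=0$, $j'=k_0$, $\lambda=0$ with $j_2$ minimal; pushing $\lambda$ to $[(k-1)/2]$ gives the maximal $\Gamma^4$-degree $2p+m$, while pushing $j_2$ to $3k$ with $j=0$ gives the maximal $\pi^{-1}$-degree $2p+3m-2$. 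That every term stays within these windows yields the stated containments, and the optimality of all four bounds is witnessed by the explicit examples recorded after Theorem~\ref{thm:Minus-0}.
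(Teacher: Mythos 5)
Your proposal is correct and follows the paper's own proof essentially step for step: Theorem~\ref{main-thm-one-plus} with $a=4p+1$, parity-split depth reduction via Theorems~\ref{thm-alter-Cosh-one} and~\ref{thm-noalter-Cosh-one}, the closed forms of Lemmas~\ref{lem-one-exform-cosh-Qx/2z} and~\ref{lem-one-exform-cosh-Qxz}, then Corollary~\ref{cor:kthDerzn} at $x=1/2$ (using $v'(1/2)=0$), with the same degree bookkeeping and the same correct identification of where the hypothesis $p\geq[m/2]$ enters. One cosmetic slip: for odd $m$ the minimal degree $2p-m+2$ cannot be witnessed by a $z^{n+1}z'$-type term, since Lemma~\ref{lem-one-exform-cosh-Qx/2z} produces none; in that case it comes from the $z^{4\ss+2}\Q[v^2]v$ terms with $l=0$ and $h$ maximal, which changes none of the stated bounds.
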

\begin{proof}
From Theorem \ref{main-thm-one-plus}
\begin{align*}
\int_0^\infty \frac{x^{4p+1} dx}{(\cos x+\cosh x)^m}
\in \sum_{l=0}^{[(m-1)/2]}\sum_{j=0}^{m-1-2l}  \Q \pi^{4p+2-j}\frac{d^{m-1-2l-j}}{dy^{m-1-2l-j}}\left\{\sum_{n=1}^\infty (-1)^{mn} \frac{\tn^{4p+2+2l-m}}{\cosh^m (\tn y)}\right\}\bigg|_{x=1/2}.
\end{align*}
If $m$ is odd then for all integers $p\geq (m-1)/2$ Theorem \ref{thm-alter-Cosh-one} yields
\begin{align*}
\int_0^\infty \frac{x^{4p+1} dx}{(\cos x+\cosh x)^m}
\in &\,\sum_{l=0}^{(m-1)/2}\sum_{j=0}^{m-1-2l}  \Q \pi^{4p+2-j}\frac{d^{m-1-2l-j}}{dy^{m-1-2l-j}}
\widetilde{C}_{4p+2+2l-m,m}(y) \bigg|_{x=1/2} \\
=&\, \sum_{l=0}^{(m-1)/2}\sum_{j=0}^{m-1-2l}
\sum_{h=0}^{(m-1)/2}\Q \pi^{4p+2-j}\Big(\widetilde{C}_{4p+2+2l-m-2h}(y)\Big)^{(m-1-2l-j+2h)}  \bigg|_{x=1/2}.
\end{align*}
By Lemma \ref{lem-one-exform-cosh-Qx/2z}, setting $\mu=4p+3+2l-m-2h$ (which is even and $\ge 6$) we get
\begin{align*}		
\widetilde{C}_{\mu-1,1}(y)=\left\{
                             \begin{array}{ll}
                               \widetilde{C}_{4\ss+1,1}(y)\in  z^{4\ss+2}  \Q[v], \quad &  \quad\hbox{if $\mu=4\ss+2$;} \\
                               \widetilde{C}_{4\ss+3,1}(y)\in  z^{4\ss+4}  \Q[v] v', \quad &  \quad\hbox{if $\mu=4\ss+4$.}
                             \end{array}
                           \right.
\end{align*}
Set $\kappa=m-1-2l-j+2h$. If $\mu=4\ss+2$ then \eqref{equ:kthDerzn-at1/2}
\begin{equation*}
\frac{d^\kappa}{dy^\kappa}\Big(\widetilde{C}_{\mu-1}(y)\Big)\bigg|_{x=1/2} \in
\sum_{\gl=0}^{[\kappa/2]} \sum_{q=\kappa}^{3\kappa}  \frac{\Gamma^{8\ss+4+8\gl}(1/4) }{\pi^{3\mu/2+q}}\Q
=\sum_{\substack{0\le i\le \kappa\\  i\equiv 0 \mod{2}}}\sum_{q=\kappa}^{3\kappa} \frac{\Gamma^{2\mu+4i}(1/4) }{\pi^{3\mu/2+q-4p-2+j}}\Q
\end{equation*}
and if $\mu=4\ss+4$ then \eqref{equ:kthDerznDotu-at1/2} implies
\begin{equation*}
\frac{d^\kappa}{dy^\kappa}\Big(\widetilde{C}_{\mu-1}(y)\Big)\bigg|_{x=1/2} \in
\sum_{\gl=0}^{[(\kappa-1)/2]} \sum_{q=\kappa}^{3\kappa}  \frac{\Gamma^{8\ss+12+8\gl}(1/4) }{\pi^{3\mu/2+q}}\Q
=\sum_{\substack{0\le i\le \kappa\\  i\equiv 1 \mod{2}}}\sum_{q=\kappa}^{3\kappa} \frac{\Gamma^{2\mu+4i}(1/4) }{\pi^{3\mu/2+q-4p-2+j}}\Q.
\end{equation*}
Putting these two cases together, we have
\begin{align*}
\int_0^\infty \frac{x^{4p+1} dx}{(\cos x+\cosh x)^m}
\in &\, \sum_{l=0}^{(m-1)/2}\sum_{j=0}^{m-1-2l}
\sum_{h=0}^{(m-1)/2}\sum_{\substack{0\le i\le \kappa\\ 2\mu+4i\equiv 4 \mod{8}}} \sum_{q=\kappa}^{3\kappa}  \frac{\Gamma^{2\mu+4i}(1/4) }{\pi^{3\mu/2+q-4p-2+j}}\Q.
\end{align*}
Note that the power $2\mu+4i$ of $\Gamma$ also satisfies
\begin{equation*}
8p-4m+8\le 2\mu+4i \le 2\mu+4\kappa\le 8p+2m+2+4h\le 8p+4m,
\end{equation*}
while the power of $\pi^{-1}$ satisfies
\begin{align*}
\frac32\mu +q-4p-2+j=&\,\frac32(4p+3+2l-m-2h) +q-4p-2+j  \\
\geq &\, 2p+\frac52+3l-\frac32m-3h+\kappa+j\\
\geq &\, 2p+\frac52+3l-\frac32m-3h+(m-1-2l+2h) \\
\geq &\, 2p+\frac32-\frac12m-\frac12(m-1)
=2p-m+2
\end{align*}
and
\begin{align*}
\frac32\mu +q-4p-2+j
\le &\, 2p+\frac52+3l-\frac32m-3h+3(m-1-2l-j+2h)+j\\
\le &\, 2p-\frac12+\frac32m+\frac32(m-1)=2p+3m-2.
\end{align*}

Similarly, if $m$ is even then for all integers  $p\geq m/2$ Theorem \ref{thm-noalter-Cosh-one} yields
\begin{align*}
\int_0^\infty \frac{x^{4p+1} dx}{(\cos x+\cosh x)^m}
\in &\,\sum_{l=0}^{m/2-1}\sum_{j=0}^{m-1-2l}  \Q \pi^{4p+2-j}\frac{d^{m-1-2l-j}}{dy^{m-1-2l-j}}
C'_{4p+2+2l-m,m}(y) \bigg|_{x=1/2} \\
=&\, \sum_{l=0}^{m/2-1}\sum_{j=0}^{m-1-2l}
\sum_{h=0}^{m/2-1}\Q \pi^{4p+2-j}\Big(C'_{4p+2+2l-m-2h,2}(y)\Big)^{(m-1-2l-j+2h)}  \bigg|_{x=1/2}.
\end{align*}
By Lemma \ref{lem-one-exform-cosh-Qxz}, setting $\mu=4p+4+2l-m-2h$ (which is even and $\ge 2$) we get
\begin{align*}		
C'_{\mu-2,2}(y)=\left\{
                             \begin{array}{ll}
                              C'_{4\ss,2}(y)\in z^{4\ss+2}\Q[v]v' +z^{4\ss+1}z' \Q[v], \quad & \quad \hbox{if $\mu=4\ss+2$;} \\
                              C'_{4\ss-2,2}(y)\in z^{4\ss}\Q[v] +z^{4\ss-1}z' \Q[v] v', \quad & \quad \hbox{if $\mu=4\ss$.}
                             \end{array}
                           \right.
\end{align*}
Set $\kappa=m-1-2l-j+2h$. If $\mu=4\ss+2$ then \eqref{equ:kthDerznDotu-at1/2} and \eqref{equ:kthDerznz'-at1/2} yield
\begin{align*}
\frac{d^\kappa}{dy^\kappa}\Big(C'_{4\ss,2}(y)\Big)\bigg|_{x=1/2} \in &\,
\sum_{\gl=0}^{[(\kappa-1)/2]} \sum_{q=\kappa}^{3\kappa}  \frac{\Gamma^{8\ss+8+8\gl}(1/4) }{\pi^{6\ss+3+q}}\Q
+\sum_{\gl=0}^{[\kappa/2]} \sum_{q=\kappa+1}^{3\kappa+3}  \frac{\Gamma^{8\ss+8\gl}(1/4) }{\pi^{6\ss+q}}\Q  \\
\subset &\,  \sum_{\gl=-1}^{[\kappa/2]} \sum_{q=\kappa-2}^{3\kappa}  \frac{\Gamma^{8\ss+8+8\gl}(1/4) }{\pi^{3\mu/2+q}}\Q
=  \sum_{\substack{-1\le i\le \kappa\\  i\equiv 1 \mod{2}}}  \sum_{q=\kappa-2}^{3\kappa} \frac{\Gamma^{2\mu+4i}(1/4) }{\pi^{3\mu/2+q}}\Q
\end{align*}
and if $\mu=4\ss$ then \eqref{equ:kthDerzn-at1/2} and \eqref{equ:kthDerznz'Dotu-at1/2} imply
\begin{align*}
\frac{d^\kappa}{dy^\kappa}\Big(C'_{4\ss-2,2}(y)\Big)\bigg|_{x=1/2} \in &\,
\sum_{\gl=0}^{[\kappa/2]} \sum_{q=\kappa}^{3\kappa}  \frac{\Gamma^{8\ss+8\gl}(1/4) }{\pi^{6\ss+q}}\Q
+\sum_{\gl=0}^{[(\kappa-1)/2]} \sum_{q=\kappa+1}^{3\kappa+3}  \frac{\Gamma^{8\ss+8\gl}(1/4) }{\pi^{6\ss-3+q}}\Q  \\
\subset &\,  \sum_{\gl=0}^{[\kappa/2]} \sum_{q=\kappa-2}^{3\kappa}  \frac{\Gamma^{8\ss+8\gl}(1/4) }{\pi^{3\mu/2+q}}\Q
= \sum_{\substack{-1\le i\le \kappa\\  i\equiv 0 \mod{2}}} \sum_{q=\kappa-2}^{3\kappa} \frac{\Gamma^{2\mu+4i}(1/4) }{\pi^{3\mu/2+q}}\Q
\end{align*}
Putting these two cases together, we have
\begin{align*}
\int_0^\infty \frac{x^{4p+1} dx}{(\cos x+\cosh x)^m}
\in  &\, \sum_{l=0}^{m/2-1}\sum_{j=0}^{m-1-2l}
\sum_{h=0}^{m/2-1}\sum_{\substack{-1\le i\le \kappa\\ 2\mu+4i\equiv 0 \mod{8}}} \sum_{q=\kappa-2}^{3\kappa}  \frac{\Gamma^{2\mu+4i}(1/4) }{\pi^{3\mu/2+q-4p-2+j}}\Q.
\end{align*}

Note that the power $2\mu+4i$ of $\Gamma$  also satisfies
\begin{equation*}
8p+8-4m\le 2\mu+4i \le 2\mu+4\kappa\le 8p+2m+4+4h\le 8p+4m,
\end{equation*}
while the power of $\pi^{-1}$ satisfies
\begin{align*}
\frac32\mu +q-4p-2+j=&\,\frac32(4p+4+2l-m-2h) +q-4p-2+j  \\
\geq &\, 2p+2+3l-\frac32m-3h+\kappa+j\\
\geq &\, 2p+2+3l-\frac32m-3h+(m-1-2l+2h)\\
\geq &\, 2p-\frac12m-h+1 \\
\geq &\, 2p-m+2
\end{align*}
and
\begin{align*}
\frac32\mu +q-4p-2+j=&\, 2p+4+3l-\frac32m-3h+3(m-1-2l-j+2h)+j\\
\le &\, 2p+1+\frac32m+\frac32m-3=2p+3m-2.
\end{align*}

We have now completed the proof of the theorem.
\end{proof}

\begin{re}
(i) The bounds on the degrees of $\pi^{-1}$ and $\Gamma^4(1/4)$ in Theorem~\ref{thm:Plus} are optimal by numerical evidence.
See \cite[Example 4.2]{RXZ2023} for $m=3$ and \cite[Example 8.13]{XuZhao-2022} for $m=2$.

(ii) Note that the condition $p\geq [m/2]$ is not optimal for the integral in Theorem~\ref{thm:Plus} to converge.
For example, for $(p,m)=(2,6)$ we have the following numerically verified conjecture.
\end{re}

\begin{conj}
Set $\Gamma=\Gamma(1/4)$. Then we have
\begin{align*}
\int_0^\infty \frac{x^9 dx}{(\cos x+\cosh x)^6}
=&\,\frac{-63}{5\cdot 2^{10}}
+ \frac{1071\Gamma^{8}}{5^2\cdot 2^{13}\pi^{2}}
-\frac{21\Gamma^{8}}{2^{12}\pi^{3}}
+\frac{63\Gamma^{8}}{2^{16}\pi^{4}}
-\frac{21\Gamma^{16}}{5^3\cdot 2^{13}\pi^{4}}
+ \frac{3\Gamma^{16}}{5\cdot 2^{13}\pi^{5}}\\
&\,-\frac{161\Gamma^{16}}{5\cdot 2^{19}\pi^{6}}
+\frac{21\Gamma^{16}}{2^{21}\pi^{8}}
+\frac{\Gamma^{24}}{5^2\cdot 2^{19}\pi^{8}}
- \frac{\Gamma^{24}}{3\cdot 2^{20}\pi^{9}}
+ \frac{69\Gamma^{24}}{5\cdot 2^{25}\pi^{10}}\\
&\,-\frac{21\Gamma^{24}}{5\cdot 2^{24}\pi^{11}}
+\frac{63\Gamma^{24}}{5\cdot 2^{27}\pi^{12 }}
-\frac{17\Gamma^{32}}{3\cdot 5^2\cdot 2^{31}\pi^{14}}
+\frac{13\Gamma^{32}}{5\cdot 2^{34}\pi^{16}}
+\frac{3\Gamma^{40}}{5^2\cdot 2^{40}\pi^{20}}.
\end{align*}
\end{conj}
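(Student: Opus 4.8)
The plan is to run the same pipeline that establishes Theorem~\ref{thm:Plus}, but to supply by hand the one hyperbolic sum that falls outside its hypothesis. First observe that $(p,m)=(2,6)$ has $p=2<[m/2]=3$, so Theorem~\ref{thm:Plus} does not apply; nonetheless the formula of Theorem~\ref{main-thm-one-plus} is valid for all real $a>0$ and all $m\geq1$, so I would begin by specializing it to $a=9$, $m=6$. Since $(-1)^{6n}=1$, this writes $\frac{2^{4}(1-i^{10})}{(1+i)^{8}}\int_0^\infty x^9(\cos x+\cosh x)^{-6}\,dx$ as a $\Q[\pi]$-combination of the $y$-derivatives (of orders $5-2l-j$, times the prefactors $\pi^{10-j}$ with $0\le j\le 5$), evaluated at $y=\pi$ (i.e.\ $x=1/2$), of $C'_{4,6}$, $C'_{6,6}$ and $C'_{8,6}$.

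Next I would reduce each of these through the differentiation identity \eqref{equ-diff-cosh-fun-two} and the matrix inversion behind Theorem~\ref{thm-noalter-Cosh-one}. Although that theorem is stated for $m\geq1$, the underlying linear relation $\frac{d^{2k}}{dy^{2k}}C'_{2s,2}=\sum_{l}\widetilde D_{k,l}C'_{2s+2k,2l+2}$ holds for every $s\geq0$, so inverting it expresses $C'_{4,6}$, $C'_{6,6}$, $C'_{8,6}$ as $\Q$-combinations of even $y$-derivatives of $C'_{2s,2}$ with $s\in\{0,1,2,3,4\}$; explicitly $C'_{4,6}=\widetilde d_0 C'_{4,2}+\widetilde d_1(C'_{2,2})''+\widetilde d_2(C'_{0,2})^{(4)}$, and similarly for the other two. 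For $s\geq1$ every such term is handled by Lemma~\ref{lem-one-exform-cosh-Qxz} together with Corollary~\ref{cor:kthDerzn}, producing elements of $\Q[\Gamma^4(1/4),\pi^{-1}]$ of $\Gamma$-degree at least $4s\geq4$. The single term not covered by the existing lemmas is $C'_{0,2}$ (the case $s=0$), which enters only through $C'_{4,6}$.

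The hard part, and the reason the statement is only conjectural in the text, is to evaluate $C'_{0,2}(y)=\sum_{n\geq1}\cosh^{-2}(\tn y)$ in closed form. I would first rearrange it by a standard Lambert-series manipulation into ${C'_{0,2}(y)=2\sum_{m\geq1}(-1)^{m-1}m/\sinh(my)=2{\bar S}_{1,1}(y)}$. This exhibits $C'_{0,2}$ as the weight-two seed that Lemma~\ref{lem-one-exform-sinh-Qxz} deliberately omits (it treats only $\ss\geq1$). To evaluate ${\bar S}_{1,1}$ I would use the classical Fourier expansion of $\dn^2(u,k)$ (Whittaker--Watson \cite{WW1966}), whose mean value over a period is $E/K$ and whose value at the half-period is $\dn^2(K)=k'^2=1-x$; comparing the two at $u=K$, where the Fourier exponentials contribute the alternating sign $(-1)^n$, gives ${\bar S}_{1,1}(y)=\frac{z^2}{4}\big(E/K-1+x\big)$. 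Finally the classical relation $dK/dx=\big(E-(1-x)K\big)/\big(2x(1-x)\big)$ rewrites $E-(1-x)K=\pi\sigma z'$ with $\sigma=x(1-x)$, whence
\begin{equation*}
C'_{0,2}(y)=2{\bar S}_{1,1}(y)=\sigma\, z\, z',
\end{equation*}
a closed form of the shape $z^{2\cdot0+1}z'g(v)$ with $g(v)=v^2$. (As a check, at $x=1/2$ this yields $C'_{0,2}(\pi)=1/(2\pi)$, matching the numerics.)

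With this identity in hand the remainder is the established machinery. The $y$-derivatives of $C'_{0,2}=\sigma zz'$ at $x=1/2$ are read off from Corollary~\ref{cor:kthDerzn}, specifically \eqref{equ:kthDerznz'-at1/2} with $\ss=0$, using \eqref{equ:dx/dy}; crucially the $\gl=0$ part there carries $\Gamma^{0}=1$, so it contributes \emph{pure} powers $\pi^{-j}$, which upon multiplication by the prefactors $\pi^{10-j}$ of Theorem~\ref{main-thm-one-plus} produce the purely rational coefficient $-63/(5\cdot2^{10})$ --- the fingerprint of the $s=0$ sum, which cannot occur once $p\geq[m/2]$. Assembling the contributions of $C'_{0,2},C'_{2,2},C'_{4,2},C'_{6,2},C'_{8,2}$ and collecting monomials in $\Gamma^4(1/4)$ and $\pi^{-1}$ then yields the sixteen-term evaluation. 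This final bookkeeping is lengthy but entirely routine and is best organized, and cross-checked against the asserted numerics, by computer algebra; the only genuinely new ingredient is the closed evaluation $C'_{0,2}=\sigma zz'$, after which the same argument in fact settles the boundary cases $p=[m/2]-1$ with $m$ even, the deeper cases $p<[m/2]-1$ requiring analogous weight-two (and lower) seeds for the sums $C'_{-2,2},C'_{-4,2},\dots$ that then appear.
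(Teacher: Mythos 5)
You should first be aware that the paper does not prove this statement at all: it appears only as a numerically verified conjecture, inserted precisely to illustrate that the hypothesis $p\geq[m/2]$ of Theorem~\ref{thm:Plus} excludes convergent cases such as $(p,m)=(2,6)$. So there is no proof in the paper to compare against, and your proposal, if carried to completion, would go strictly beyond the text. Your strategy is sound and correctly diagnoses why the case escapes the paper's machinery: Theorem~\ref{main-thm-one-plus} does apply with $a=9$, $m=6$ and produces exactly the sums $C'_{4,6},C'_{6,6},C'_{8,6}$; the linear relation behind Theorem~\ref{thm-noalter-Cosh-one} does hold for the index $s=0$ (term-by-term differentiation of \eqref{equ-diff-cosh-fun-two} never uses positivity of the exponent), so the inversion legitimately expresses $C'_{4,6}$ through $(C'_{0,2})^{(4)}$; and $C'_{0,2}$ is indeed the unique seed outside Lemma~\ref{lem-one-exform-cosh-Qxz}. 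Your key new identity checks out: the Lambert-series rearrangement $C'_{0,2}(y)=2{\bar S}_{1,1}(y)$ is correct (both equal $4\sum_{m,n\ge1}(-1)^{m-1}mq^{(2n-1)m}$), the Fourier expansion of $\dn^2$ at $u=K$ gives ${\bar S}_{1,1}=\frac{z^2}{4}\big(E/K-(1-x)\big)$, and $E-(1-x)K=\pi\sigma z'$ yields $C'_{0,2}=\sigma zz'$, consistent with the numerical value $C'_{0,2}(\pi)=1/(2\pi)$. This is exactly the ``weight-two'' quasimodular seed (an $E_2$-type anomaly, involving $z'$ rather than a pure power of $z$) that Lemma~\ref{lem-one-exform-sinh-Qxz} and equation \eqref{equ:accurateBarS} deliberately start above, and it correctly explains the purely rational term $-63/(5\cdot2^{10})$, which cannot occur under the theorem's hypothesis.

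Two caveats keep this a proposal rather than a proof. First, you invoke \eqref{equ:kthDerznz'-at1/2} with $\ss=0$, whereas Corollary~\ref{cor:kthDerzn} is stated for $\ss\in\N$; the extension is genuinely harmless (the proof only pairs each factor $z^{(j)}$ with one copy of $z$ and uses $v'(1/2)=0$, never $\ss\geq1$), but you should say so explicitly, since for $\ss=0$ a naive reading of the exponent bookkeeping could suggest negative powers of $\Gamma(1/4)$. Second, your argument establishes the \emph{structure} of the evaluation --- which monomials $\Gamma^{8j}\pi^{-q}$ can occur --- but the conjecture asserts sixteen specific rational coefficients, and you delegate their assembly to computer algebra. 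Since every ingredient (the coefficients $\ga_{l,6}$, the inverse matrix ${\bf \widetilde{D}}_3^{-1}$, the polynomials from Lemma~\ref{lem-one-exform-cosh-Qxz}, and the derivative values \eqref{equ:znDer} at $x=1/2$) is an explicit rational datum, this final step is a finite exact computation and is a legitimate way to finish; but until it is actually performed, the stated coefficients remain verified only numerically, which is the same epistemic status the paper already gives them. With that computation done, your outline would upgrade the conjecture to a theorem.
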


\begin{thm} \label{thm:Minus}
(i) For all integers $p\geq k+1\geq 1$, the Berndt-type integrals
\begin{equation*}
\int_0^\infty \frac{x^{4p-1} dx}{(\cos x-\cosh x)^{2k+1}} \in \Q\Big[\Gamma^4(1/4),\pi^{-1}\Big],
\end{equation*}
where the degrees of $\Gamma^4(1/4)$ are always even and are between $2p-2k$ and $2p+2k$, inclusive,
while the degrees of $\pi^{-1}$ are between $2p-2k$ and $2p+6k$, inclusive.

(ii) For all integers $p\geq k\geq 1$, the Berndt-type integrals
\begin{equation*}
\int_0^\infty \frac{x^{4p+1} dx}{(\cos x-\cosh x)^{2k}} \in \Q\Big[\Gamma^4(1/4),\pi^{-1}\Big],
\end{equation*}
where the degrees of $\Gamma^4(1/4)$  are always even and are between $2p+2-2k$ and $2p+2k$, inclusive, while
the degrees of $\pi^{-1}$ are between $2p-2k+2$ and $2p+6k-2$, inclusive.
\end{thm}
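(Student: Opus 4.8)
The plan is to run the argument of Theorem~\ref{thm:Plus} almost verbatim, only with the cosine sums $\widetilde C, C'$ replaced by the sine sums ${\bar S}, S$. First I would feed the two integrals into Theorem~\ref{main-thm-one-minus}, which writes each of them as a finite $\Q$-linear combination of powers of $\pi$ times $y$-derivatives at $x=1/2$ of the sum $\sum_{n\ge1}(-1)^{mn}n^{a+1+2l-m}/\sinh^m(ny)$. The decisive point is the parity of the depth $m$: when $m=2k+1$ is odd one has $(-1)^{mn}=(-1)^n$, so this sum is $-{\bar S}_{a+1+2l-m,\,2k+1}(y)$; when $m=2k$ is even one has $(-1)^{mn}=1$, so it is $S_{a+1+2l-m,\,2k}(y)$. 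This dichotomy is precisely why the theorem is split into the two cases (i) and (ii).

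For part~(i), with $a=4p-1$, I would first apply Theorem~\ref{thm-alter-Sinh-one} to collapse the depth $2k+1$ to depth $1$, expressing ${\bar S}_{4p+2l-2k-1,\,2k+1}(y)$ as a $\Q$-combination of even-order $y$-derivatives of the simple sums ${\bar S}_{4p+2l-2k-1-2h,1}(y)$ for $0\le h\le k$. Here the hypothesis $p\ge k+1$ is exactly what keeps the parameter $m'=2p+l-2k$ fed into that theorem at least $1$. Next, Lemma~\ref{lem-one-exform-sinh-Qxz} provides the closed forms: according to the residue of the index modulo $4$, one lands in either $z^{4\ss}\Q[\gs]$ (the $z^{2\ss}g(v)$ shape) or $z^{4\ss+2}\Q[\gs]\gs'$, which via $\gs'=2vv'$ is the $z^{2\ss}g(v)v'$ shape. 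Finally \eqref{equ:kthDerzn-at1/2} and \eqref{equ:kthDerznDotu-at1/2} of Corollary~\ref{cor:kthDerzn} evaluate the outstanding $y$-derivative at $x=1/2$, landing in $\Q[\Gamma^4(1/4),\pi^{-1}]$ with explicit degree windows.

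Part~(ii), with $a=4p+1$, is the same but one step shallower: Theorem~\ref{thm-noalter-Sinh-one} reduces the depth $2k$ to depth $2$, and Lemma~\ref{lem-one-noexform-sinh-Qxz} then supplies the closed forms for $S_{\mu,2}(y)$. I would flag one helpful simplification: when $p\ge k$ the smallest index occurring is $S_{4,2}$, never $S_{2,2}$, so the anomalous summand $z^2(z')^2\Q[\gs]$ of Lemma~\ref{lem-one-noexform-sinh-Qxz}---which is not one of the four basic shapes handled by Corollary~\ref{cor:kthDerzn}---is never triggered. Every surviving term is then of one of the shapes $z^{2\ss}g(v)$, $z^{2\ss}g(v)v'$, $z^{2\ss+1}z'g(v)$, $z^{2\ss+1}z'g(v)v'$, so all four evaluations \eqref{equ:kthDerzn-at1/2}--\eqref{equ:kthDerznz'Dotu-at1/2} apply directly.

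The hard part will be the degree bookkeeping. After all substitutions one is left with a nested sum over indices $(l,j,h,\gl,q)$, and I would track the exponent of $\Gamma^4(1/4)$ and of $\pi^{-1}$ as affine functions of these indices and then extremize over the polytope cut out by the summation ranges. Two subtleties need attention. First, the parity assertion: a congruence check modulo $4$ on the index $4p+2(l-k-h)-1$ shows that the $z^{2\ss}g(v)$ shape forces $\ss$ even and the $z^{2\ss}g(v)v'$ shape forces $\ss$ odd, whence in both shapes the resulting power of $\Gamma^4(1/4)$ comes out even, as claimed. Second, tightness: for part~(i) all four degree endpoints are attained at the single corner $(l,j,h)=(0,0,k)$, with $q$ and $\gl$ ranging to their extremes, giving $\Gamma^4(1/4)$-degrees from $2p-2k$ to $2p+2k$ and $\pi^{-1}$-degrees from $2p-2k$ to $2p+6k$; part~(ii) is entirely analogous with the windows shifted as in Theorem~\ref{thm:Minus-0}. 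I expect this linear-programming-style optimization of the nested ranges, carried out while keeping all parities consistent, to be the only genuinely delicate step.
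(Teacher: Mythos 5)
Your proposal is correct and follows essentially the same route as the paper's own proof: Theorem~\ref{main-thm-one-minus}, the parity split of $(-1)^{mn}$ into the ${\bar S}$ (odd $m$) and $S$ (even $m$) cases, depth reduction via Theorems~\ref{thm-alter-Sinh-one} and \ref{thm-noalter-Sinh-one}, closed forms from Lemmas~\ref{lem-one-exform-sinh-Qxz} and \ref{lem-one-noexform-sinh-Qxz}, evaluation by Corollary~\ref{cor:kthDerzn}, and the same affine-exponent bookkeeping. Your observation that the hypothesis $p\geq k$ forces $\mu\geq 6$ so that the anomalous $z^2(z')^2\Q[\gs]$ term of $S_{2,2}$ never arises is exactly the point the paper uses implicitly, and your parity argument for the even degrees of $\Gamma^4(1/4)$ matches the paper's congruence condition $2\mu+4i\equiv 0 \pmod 8$.
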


\begin{proof}
From Theorem \ref{main-thm-one-minus}
\begin{align}\label{equ:one-minus}
\int_0^\infty \frac{x^{a} dx}{(\cos -\cosh x)^m}
\in \sum_{l=0}^{[(m-1)/2]}\sum_{j=0}^{m-1-2l}  \Q \pi^{a+1-j} \frac{d^{m-1-2l-j}}{dy^{m-1-2l-j}}
\left\{\sum_{n=1}^\infty (-1)^{mn} \frac{n^{a+1+2l-m}}{\sinh^m (n y)}\right\} \bigg|_{x=1/2}.
\end{align}
If $m=2k+1$ is odd then for all integers $p \geq (m+1)/2$
\begin{align*}
\int_0^\infty \frac{x^{4p-1} dx}{(\cos x-\cosh x)^m}
\in &\,\sum_{l=0}^{(m-1)/2}\sum_{j=0}^{m-1-2l}  \Q \pi^{4p-j}\frac{d^{m-1-2l-j}}{dy^{m-1-2l-j}}
{\bar S}_{4p+2l-m,m}(y) \bigg|_{x=1/2} \\
=&\, \sum_{l=0}^{(m-1)/2}\sum_{j=0}^{m-1-2l}
\sum_{h=0}^{(m-1)/2}\Q \pi^{4p-j}\Big({\bar S}_{4p+2l-m-2h}(y)\Big)^{(m-1-2l-j+2h)}  \bigg|_{x=1/2}
\end{align*}
by Theorem \ref{thm-alter-Sinh-one}. By Lemma \ref{lem-one-exform-sinh-Qxz},
setting $\mu=4p+1+2l-m-2h$ (which is even and $\ge 4$) we get
\begin{align*}		
{\bar S}_{\mu-1,2}(y)=\left\{
                             \begin{array}{ll}
                              {\bar S}_{4\ss+1,1}(y)\in z^{4\ss+2}\Q[v]v', \quad & \quad \hbox{if $\mu=4\ss+2$;} \\
                              {\bar S}_{4\ss-1,1}(y)\in z^{4\ss}\Q[v], \quad & \quad \hbox{if $\mu=4\ss$.}
                             \end{array}
                           \right.
\end{align*}
Set  $\kappa=m-1-2l-j+2h$. If $\mu=4\ss+2$ then \eqref{equ:kthDerznDotu-at1/2} yields
\begin{align*}
\frac{d^\kappa}{dy^\kappa}\Big( {\bar S}_{4\ss+1,1}(y)\Big)\bigg|_{x=1/2} \in &\,
\sum_{\gl=0}^{[(\kappa-1)/2]} \sum_{q=\kappa}^{3\kappa}  \frac{\Gamma^{8\ss+8+8\gl}(1/4) }{\pi^{3\mu/2+q}}\Q
=  \sum_{\substack{0\le i\le \kappa\\  i\equiv 1 \mod{2}}}  \sum_{q=\kappa}^{3\kappa} \frac{\Gamma^{2\mu+4i}(1/4) }{\pi^{3\mu/2+q}}\Q
\end{align*}
and if $\mu=4\ss$ then \eqref{equ:kthDerzn-at1/2} implies
\begin{align*}
\frac{d^\kappa}{dy^\kappa}\Big( {\bar S}_{4\ss-1,1}(y)\Big)\bigg|_{x=1/2} \in &\,
\sum_{\gl=0}^{[\kappa/2]} \sum_{q=\kappa}^{3\kappa}  \frac{\Gamma^{8\ss+8\gl}(1/4) }{\pi^{3\mu/2+q}}\Q
= \sum_{\substack{0\le i\le \kappa\\  i\equiv 0 \mod{2}}} \sum_{q=\kappa-2}^{3\kappa} \frac{\Gamma^{2\mu+4i}(1/4) }{\pi^{3\mu/2+q}}\Q.
\end{align*}
Putting these two cases together, we have
\begin{align*}
\int_0^\infty \frac{x^{4p-1} dx}{(\cos x-\cosh x)^m}
\in &\, \sum_{l=0}^{(m-1)/2}\sum_{j=0}^{m-1-2l}
\sum_{h=0}^{(m-1)/2}\sum_{\substack{0\le i\le \kappa\\ 2\mu+4i\equiv 0 \mod{8}}}\sum_{q=\kappa}^{3\kappa}  \frac{\Gamma^{2\mu+4i}(1/4) }{\pi^{3\mu/2+q-4p+j}}\Q.
\end{align*}
Note that the power $2\mu+4i$ of $\Gamma$ also satisfies
\begin{equation*}
8p-4m+4\le 2\mu+4i \le 2\mu+4\kappa\le 8p+2m-2+4h\le 8p+4m-4,
\end{equation*}
while the power of $\pi$ on the denominator satisfies
\begin{align*}
\frac32\mu +q-4p+j=&\,\frac32(4p+1+2l-m-2h) +q-4p+j  \\
=&\, 2p+\frac12+3l-\frac32m-3h+\kappa+j\\
=&\, 2p-\frac12+3l-\frac32m-3h+(m-2l+2h)\\
\geq &\, 2p-\frac32-\frac12m-\frac12(m-1)=2p-m-1=2p-2k
\end{align*}
and
\begin{align*}
\frac32\mu +q-4p+j\le &\, 2p+\frac32+3l-\frac32m-3h+3(m-1-2l-j+2h)+j\\
\le &\, 2p-\frac32+\frac32m+\frac32(m-1)=2p+3m-3=2p+6k.
\end{align*}

Similarly, if $m=2k$ is even then for all integers $p\geq m/2$ by \eqref{equ:one-minus}
\begin{align*}
\int_0^\infty \frac{x^{4p+1} dx}{(\cos x-\cosh x)^m}
\in &\,\sum_{l=0}^{m/2-1}\sum_{j=0}^{m-1-2l}  \Q \pi^{4p+2-j}\frac{d^{m-1-2l-j}}{dy^{m-1-2l-j}}
S_{4p+2+2l-m,m}(y) \bigg|_{x=1/2} \\
=&\, \sum_{l=0}^{m/2-1}\sum_{j=0}^{m-1-2l}
\sum_{h=0}^{m/2-1}\Q \pi^{4p+2-j}\Big(S_{4p+2+2l-m-2h,2}(y)\Big)^{(m-1-2l-j+2h)}  \bigg|_{x=1/2}
\end{align*}
by Theorem \ref{thm-noalter-Sinh-one}. By Lemma \ref{lem-one-noexform-sinh-Qxz},
setting $\mu=4p+4+2l-m-2h$ (which is even and $\ge 6$) we get
\begin{equation*}		
S_{\mu-2,2}(y)=\left\{
                \begin{array}{ll}
                      S_{4\ss,2}(y)   \in z^{4\ss+2}\Q[v]v'+z^{4\ss+1}z'\Q[v], \quad & \quad \hbox{if $\mu=4\ss+2\geq 6$;} \\
                      S_{4\ss-2,2}(y)   \in z^{4\ss}\Q[v]+z^{4\ss-1}z'\Q[v]v', \quad & \quad \hbox{if $\mu=4\ss\geq 8$.}
                \end{array}
                \right.
\end{equation*}
Then the rest of the proof goes verbatim as that for the even $m$ case in Theorem~\ref{thm:Plus}.
Therefore
\begin{align*}
\int_0^\infty \frac{x^{4p+1} dx}{(\cos x-\cosh x)^m}
\in  &\, \sum_{l=0}^{m/2-1}\sum_{j=0}^{m-1-2l}
\sum_{h=0}^{m/2-1}\sum_{\substack{-1\le i\le \kappa\\ 2\mu+4i\equiv 0 \mod{8}}} \sum_{q=\kappa-2}^{3\kappa}  \frac{\Gamma^{2\mu+4i}(1/4) }{\pi^{3\mu/2-2+q-4p+j}}\Q
\end{align*}
and the bounds for powers of $\Gamma(1/4)$ and $\pi^{-1}$ can be computation in exactly the same way as well.
This concludes the proof of the theorem.
\end{proof}

\begin{re}
The bounds on the degrees of $\pi^{-1}$ and $\Gamma^4(1/4)$ in Theorem~\ref{thm:Minus} are optimal by numerical evidence.
See \cite[Example 4.5]{RXZ2023} for $m=3$ and \cite[Example 6.4]{XZ2023} for $m=2$.
\end{re}

We end our paper with two conjectures that are supported by extensive numerical evidence.

\begin{con}
For all integers $n\geq 1$, the Berndt-type integrals
\begin{align*}
\int_0^\infty \frac{x\, dx}{(\cos x+\cosh x)^{2n-1}} & \in
\sum_{j=1}^{n-1} \frac{\Gamma^{8j-4}(1/4)}{\pi^{6j-4}}\Q+\sum_{j=1}^{n} \frac{\Gamma^{8j-4}(1/4)}{\pi^{6j-5}} \Q, \\
\int_0^\infty \frac{x\, dx}{(\cos x+\cosh x)^{2n}}  & \in
\Q+\sum_{j=1}^{n-1} \frac{\Gamma^{8j}(1/4)}{\pi^{6j-1}} \Q+\sum_{j=1}^{n} \frac{\Gamma^{8j}(1/4)}{\pi^{6j-2}} \Q.
\end{align*}
\end{con}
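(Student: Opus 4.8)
The plan is to specialize Theorem~\ref{main-thm-one-plus} to the exponent $a=1$ and then follow the architecture of the proof of Theorem~\ref{thm:Plus}, the only---but decisive---new feature being that the hyperbolic sums that arise now carry \emph{small or negative} first index. With $a=1$, Theorem~\ref{main-thm-one-plus} writes $\int_0^\infty x\,dx/(\cos x+\cosh x)^m$ as a finite $\Q$-linear combination, with coefficients that are rational powers of $\pi$, of the $y$-derivatives at $y=\pi$ of $\widetilde{C}_{2+2l-m,\,m}(y)$ when $m$ is odd and of $C'_{2+2l-m,\,m}(y)$ when $m$ is even, for $0\le l\le[(m-1)/2]$. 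These series converge absolutely, so the sole task is their closed-form evaluation; the obstruction relative to Theorem~\ref{thm:Plus} is precisely that the first index $2+2l-m$ descends to $2-m\le 0$ as soon as $m\ge 2$, which is exactly the regime excluded by the hypothesis $p\ge[m/2]$ that guarantees the matrix reductions land on \emph{positive}-index base sums.

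Next I would apply the matrix Theorems~\ref{thm-alter-Cosh-one} and \ref{thm-noalter-Cosh-one} to rewrite each of these sums as a $\Q$-combination of $y$-derivatives of the base sums $\widetilde{C}_{P,1}(y)$ (odd $m$) or $C'_{P,2}(y)$ (even $m$), where $P=2+2l-m-2h$. The summands with $P\ge 1$ (resp.\ $P\ge 2$) are dealt with verbatim as in Theorem~\ref{thm:Plus}: feed in the closed forms of Lemmas~\ref{lem-one-exform-cosh-Qx/2z} and \ref{lem-one-exform-cosh-Qxz} and push the resulting expressions through Corollary~\ref{cor:kthDerzn}, which is already stated for Laurent polynomials $g(v)\in\Q[v,v^{-1}]$. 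What is left are the boundary contributions $\widetilde{C}_{-(2r-1),1}(y)$ with $r\ge 1$ and $C'_{-2r,2}(y)$ with $r\ge 0$, which lie outside the range of validity of \eqref{equ:widetildeCbyp(x)} and \eqref{equ:C'byq(x)}. Indeed the latter degenerates at $P=0$: its right-hand side contains $q_0'(x)$, and since $q_0\equiv 1$ it vanishes identically, whereas $C'_{0,2}\not\equiv 0$. Genuinely new closed forms are therefore needed.

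The crux is to establish closed forms for these negative- and zero-index base sums of the same shape $z^{\ast}\,\Q[v,v^{-1}]$, now allowing negative powers of $v$ and, crucially, a constant ($\Gamma^0$) piece, so that Corollary~\ref{cor:kthDerzn} again applies. The natural tool is the modular reciprocity $y\mapsto \pi^2/y$ (equivalently $x\mapsto 1-x$, $k\leftrightarrow k'$), which for negative index is the hyperbolic analogue of Ramanujan's transformation formula for $\zeta(2k+1)$ and expresses a negative-index sum at $y$ through a positive-index sum at $\pi^2/y$ together with elementary correction terms. Since the self-dual point $y=\pi$ (that is, $x=1/2$) is fixed by this involution, the formula collapses to a single linear relation determining the desired value, and its $y$-derivatives, in terms of already-evaluated positive-index data plus explicit rational multiples of powers of $\pi$ and of the $z^{(j)}(1/2)$. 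For $m=2$ this route yields $\int_0^\infty x\,dx/(\cos x+\cosh x)^2=-\tfrac{\pi^2}{2}(C'_{0,2})'(\pi)-\tfrac{\pi}{2}\,C'_{0,2}(\pi)$, whose closed form combines a $\Gamma^0$ term with a $z^4\sim\Gamma^8$ term, in agreement with the predicted $\Q+\Q\,\Gamma^8/\pi^4$.

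The main obstacle is twofold. First, one must re-derive the parity behaviour under $x\mapsto 1-x$ of the new Laurent closed forms---using \eqref{equ:Hancock} as before but now for negative-index data---in order to decide, for each boundary sum, which of the alternatives $z^{\ast}\Q[\gs]$ or $z^{\ast}\Q[\gs]\gs'$ it falls into. Second, and this is where the sharp form of the conjecture is genuinely at stake, the reciprocity corrections inject further rational powers of $\pi$; controlling them precisely enough to establish the \emph{non-negative, sharp} degree windows asserted in the conjecture ($\Gamma^{8j-4}$ with $\pi^{6j-4},\pi^{6j-5}$ in the odd case, and $\Gamma^{8j}$ with $\pi^{6j-1},\pi^{6j-2}$ in the even case), rather than the much looser $p=0$ windows that Theorem~\ref{thm:Plus} would formally predict, requires ruling out index drift and hidden cancellations among the correction terms. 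It is exactly this bookkeeping that I expect to be hard and that we have so far only verified numerically, which is why the statement is recorded as a conjecture.
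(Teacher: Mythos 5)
This statement is one of the paper's two concluding conjectures: the authors offer no proof at all, only ``extensive numerical evidence,'' so there is no proof of record to compare yours against, and your proposal must stand on its own. Its diagnostic part is correct and matches exactly why the statement was left open: with $a=1$ (i.e.\ $p=0$), Theorem~\ref{main-thm-one-plus} produces the sums $\widetilde{C}_{2+2l-m,m}(y)$ (odd $m$) and $C'_{2+2l-m,m}(y)$ (even $m$) whose first index $2+2l-m$ is $\leq 0$ as soon as $m\geq 2$; the matrix reductions of Theorems~\ref{thm-alter-Cosh-one} and~\ref{thm-noalter-Cosh-one} require the first index to be at least the second (the power of $\cosh$), and the closed-form Lemmas~\ref{lem-one-exform-cosh-Qx/2z} and~\ref{lem-one-exform-cosh-Qxz} require positive index, so the entire pipeline of Theorem~\ref{thm:Plus} breaks down --- this is precisely what the hypothesis $p\geq[m/2]$ excludes. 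Your reduction of the $m=2$ case to $-\tfrac{\pi^2}{2}\big(C'_{0,2}\big)'(\pi)-\tfrac{\pi}{2}\,C'_{0,2}(\pi)$ also checks out against Theorem~\ref{main-thm-one-plus}.

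However, the crux of your plan is asserted rather than proved, so there is a genuine gap. You never write down, let alone establish, the transformation formula under $y\mapsto\pi^2/y$ for the specific zero- and negative-index sums $\widetilde{C}_{-(2r-1),1}(y)$ and $C'_{-2r,2}(y)$; for quasi-modular objects such as $\sum_{n\geq1}1/\cosh^2(\tn y)$ the involution produces correction terms whose exact shape is the entire content of the claim. Moreover, at the self-dual point $y=\pi$ a relation of the form $F(\pi)=\pm F(\pi)+\text{corrections}$ either determines $F(\pi)$ or degenerates to $0=0$ depending on the sign, a dichotomy you do not resolve, and the same issue recurs at every derivative order up to $m-1$, which is what the conjecture actually requires. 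Even granting such closed forms, Corollary~\ref{cor:kthDerzn} (which produces only terms $\Gamma^{4\ss+8\gl}(1/4)\pi^{-3\ss-j}$ with $\ss\geq1$) would need a nontrivial extension to accommodate the constant ($\Gamma^0$) pieces and the new correction terms, and the sharp windows $\Gamma^{8j-4}/\pi^{6j-4}$, $\Gamma^{8j-4}/\pi^{6j-5}$ (odd case) and $\Q$, $\Gamma^{8j}/\pi^{6j-1}$, $\Gamma^{8j}/\pi^{6j-2}$ (even case) demand a degree bookkeeping that you explicitly concede has ``so far only been verified numerically.'' In short, what you have is a sensible research plan that correctly isolates the missing ingredients, not a proof; the statement remains, for you just as for the authors, a conjecture.
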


\begin{con} Set $\Gamma=\Gamma(1/4)$. For all integers $n\geq 1$, the Berndt-type integrals
\begin{align*}
\int_0^\infty \frac{x^5\, dx}{(\cos x+\cosh x)^{2n-1}} \in  &
\frac{\Gamma^{4}}{\pi}\Q+\frac{\Gamma^{4}}{\pi^2}\Q+
\sum_{j=2}^{n-3}\sum_{i=1}^{6} \frac{\Gamma^{8j-4}}{\pi^{6j-10+i}}\Q
+\sum_{j=1}^{3}\sum_{i=1}^{7-2j}  \frac{\Gamma^{8j+8n-20}}{\pi^{6j+6n-22+i}}\Q, \\
\int_0^\infty \frac{x^5\, dx}{(\cos x+\cosh x)^{2n}} \in &
\Q+\sum_{i=1}^{4} \frac{\Gamma^{8}}{\pi^{i+1}}\Q+\sum_{i=1,i\ne 2}^{6} \frac{\Gamma^{16}}{\pi^{i+5}}\Q \\
 &+ \sum_{j=3}^{n-2}\sum_{i=1}^{6} \frac{\Gamma^{8j}}{\pi^{6j-7+i}}\Q
+\sum_{j=1}^{3}\sum_{i=1}^{7-2j}  \frac{\Gamma^{8j+8n-16}}{\pi^{6j+6n-19+i}}\Q.
\end{align*}
\end{con}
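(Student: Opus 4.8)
The plan is to specialize the contour-integral reduction of Theorem~\ref{main-thm-one-plus} to $a=4p+1$, which writes the integral as a $\Q$-linear combination of powers of $\pi$ times $y$-derivatives evaluated at $y=\pi$ (that is, $x=1/2$) of the half-integer-indexed hyperbolic sum $\sum_{n\ge1}(-1)^{mn}\tn^{4p+2+2l-m}/\cosh^m(\tn y)$, where $\tn=(2n-1)/2$. The factor $(-1)^{mn}$ forces a split on the parity of $m$: for odd $m$ it equals $(-1)^n$, so the inner sum is, up to sign, the alternating sum $\widetilde{C}_{4p+2+2l-m,m}(y)$; for even $m$ it equals $1$, so the inner sum is the non-alternating $C'_{4p+2+2l-m,m}(y)$. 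These are precisely the two families to which the matrix machinery of Section~4 applies.

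For odd $m$ I would invoke Theorem~\ref{thm-alter-Cosh-one} to rewrite the depth-$m$ sum $\widetilde{C}_{\bullet,m}$ as a $\Q$-combination of even-order $y$-derivatives of the depth-one sums $\widetilde{C}_{\bullet,1}$, and then apply Lemma~\ref{lem-one-exform-cosh-Qx/2z} to put each such sum into the form $z^{4\ss+2}\Q[v]$ or $z^{4\ss+4}\Q[v]v'$, where $v=\sqrt{x(1-x)}$. For even $m$ the same steps run through Theorem~\ref{thm-noalter-Cosh-one} and Lemma~\ref{lem-one-exform-cosh-Qxz}, now producing expressions of the shape $z^{\bullet}\Q[v]+z^{\bullet}z'\Q[v]$ with genuine $z'$-factors. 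In both cases the remaining even-order $y$-derivatives are evaluated at $x=1/2$ by Corollary~\ref{cor:kthDerzn}: the $z'$-free odd case needs only \eqref{equ:kthDerzn-at1/2}--\eqref{equ:kthDerznDotu-at1/2}, whereas the even case also invokes the $z'$-bearing \eqref{equ:kthDerznz'-at1/2} and \eqref{equ:kthDerznz'Dotu-at1/2}. The evaluation rests on the explicit values \eqref{equ:znDer} of $z^{(j)}(1/2)$, and the vanishing $v'(1/2)=0$ discards every $v'$-tagged summand, which is exactly the mechanism that pins down the parity of the exponent of $\Gamma^4(1/4)$. Assembling this chain establishes membership in $\Q[\Gamma^4(1/4),\pi^{-1}]$ at once.

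The real content, and the step I expect to be the main obstacle, is showing that the stated degree bounds are \emph{optimal}. To manage the triple sum over the indices $(l,j,h)$ I would set $\mu=4p+3+2l-m-2h$ in the odd case and $\mu=4p+4+2l-m-2h$ in the even case, together with $\kappa=m-1-2l-j+2h$ for the surviving derivative order fed into Corollary~\ref{cor:kthDerzn}. One then has to chase the ranges produced there—$0\le\gl\le[\kappa/2]$ for the auxiliary $\Gamma$-exponent parameter, and a $\pi$-power range topping out at $3\kappa$, both tracing back to the inequalities $0\le|\bfi_2|\le|\bfi|=\kappa$ in the corollary—through all the substitutions and back to the final exponents. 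Extremizing the resulting linear forms in $(l,j,h,\gl,q)$ yields the claimed endpoints $2p-m+2$ and $2p+m$ for the exponent of $\Gamma^4(1/4)$, and $2p-m+2$ and $2p+3m-2$ for that of $\pi^{-1}$, while the parity assertion follows from the congruences $2\mu+4i\equiv4\pmod{8}$ (odd $m$) and $2\mu+4i\equiv0\pmod{8}$ (even $m$) that are forced by $v'(1/2)=0$. The delicacy is that the odd and even cases carry slightly different index ranges (for instance the lower $\pi$-power endpoint drops from $\kappa$ to $\kappa-2$ in the even case), and one must verify that each claimed endpoint is actually attained—rather than merely bounding—to upgrade the containment into an optimality statement.
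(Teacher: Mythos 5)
There is a fundamental mismatch here: the statement you are trying to prove is one of the paper's closing \emph{conjectures}, which the authors explicitly support only by ``extensive numerical evidence'' --- the paper contains no proof of it, and your plan to derive it from the machinery of Theorem~\ref{main-thm-one-plus}, Theorems~\ref{thm-alter-Cosh-one} and \ref{thm-noalter-Cosh-one}, and Corollary~\ref{cor:kthDerzn} cannot work. The integrand is $x^5$, so in the notation of Theorem~\ref{thm:Plus} you have $p=1$ fixed while $m=2n-1$ or $m=2n$ grows, and the hypothesis $p\geq [m/2]$ fails for every $n\geq 3$ (odd case) and $n\geq 2$ (even case). This hypothesis is not a convenience: after the contour-integral reduction, the inner sums are $\widetilde{C}_{6+2l-m,\,m}(y)$ (resp.\ $C'_{6+2l-m,\,m}(y)$) with $0\le l\le [(m-1)/2]$, and once $m$ is large the upper index $6+2l-m$ becomes negative or smaller than the depth $m$. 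The matrix inversion of Theorems~\ref{thm-alter-Cosh-one} and \ref{thm-noalter-Cosh-one} only expresses $\widetilde{C}_{2(m'+k)-1,2k+1}$ and $C'_{2(m'+k),2k+2}$ with $m'\geq 1$, i.e.\ numerator index at least comparable to the depth; outside that range the inversion would demand depth-one sums $\widetilde{C}_{r,1}$ or $C'_{r,2}$ with $r\le 0$, for which no closed form in $z$, $z'$, $x$ is available --- such sums are not Fourier or Maclaurin coefficients of the Jacobi elliptic functions underlying Lemmas~\ref{lem-one-exform-cosh-Qx/2z} and \ref{lem-one-exform-cosh-Qxz}, which is precisely why the statement remains open.

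Two further signs that the conjectured structure lies beyond your proposed argument: the even case contains a pure rational term $\Q$ and a conspicuous gap ($i\neq 2$ in the $\Gamma^{16}$ block), neither of which can emerge from the bound-chasing over $(l,j,h,\gl,q)$ you describe --- that bookkeeping only produces solid rectangles of admissible exponents, as in Theorems~\ref{thm:Plus} and \ref{thm:Minus}. Also, your framing of the ``main obstacle'' as proving optimality of the degree bounds misreads the statement: the conjecture asserts a containment (with specific exponent patterns), and it is the containment itself that is unproved; the paper's remark after Theorem~\ref{thm:Plus}, together with the numerically verified formula for $(p,m)=(2,6)$, makes clear the authors regard the regime $p<[m/2]$ as genuinely out of reach of their methods. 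A correct response here would be either to verify the conjecture numerically for small $n$ or to develop new structural results for hyperbolic sums with small or negative numerator index (Eichler-integral-type objects), not to rerun the proof of Theorem~\ref{thm:Plus}.
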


\medskip
{\bf Acknowledgments.} The authors expresses their deep gratitude to Professors Bruce C. Berndt and Alexey Kuznetsov for valuable discussions and comments. Ce Xu is supported by the National Natural Science Foundation of China (Grant No. 12101008), the Natural Science Foundation of Anhui Province (Grant No. 2108085QA01) and the University Natural Science Research Project of Anhui Province (Grant No. KJ2020A0057). Jianqiang Zhao is supported by the Jacobs Prize from The Bishop's School.

\end{document}